\let\ep\varepsilon
\newcommand{\ahn}{\hyperref[e:anh]{a_h^n}}
\newcommand{\bn}{\mathbf n}
\newcommand{\bp}{\mathbf p}
\newcommand{\bw}{\mathbf w}
\newcommand{\bx}{\mathbf x}
\newcommand{\by}{\mathbf y}
\newcommand{\we}{{\bw^e}}
\renewcommand{\l}[1]{{#1}^l}
\newcommand{\ul}{\l{u}}
\newcommand{\refts}{$\!\!\!L_t\downarrow\! \setminus L_x \! \rightarrow$\hspace*{-0.6cm}}
\newcommand{\winfn}{{\bw_{\infty}^{\bn}}}
\newcommand{\deltah}{{\delta_{\scaleto{h}{2.75pt}}}}
\newcommand{\dist}{\operatorname{dist}}
\newcommand{\meas}{\operatorname{meas}}
\newcommand{\wOm}{\widetilde{\Omega}}
\newcommand{\Om}[2]{\Omega^{#1}_{#2}}
\newcommand{\TS}[1]{\mathcal{T}_{\mathcal{S}^\pm}^{#1}}
\newcommand{\TSp}[1]{\mathcal{T}_{\mathcal{S}^+}^{#1}}
\newcommand{\Td}[1]{\mathcal{T}_{\delta}^{#1}}
\newcommand{\Sd}[1]{{\mathcal{S}_{\deltah}^\pm(\Omega^{#1}) }}
\newcommand{\Sdp}[1]{{\mathcal{S}_{\deltah}^+(\Omega^{#1}) }}
\newcommand{\Sdh}[1]{{\mathcal{S}_{\deltah}^\pm(\Omega_h^{#1}) }}
\newcommand{\Sdhp}[1]{{\mathcal{S}_{\deltah}^+(\Omega_h^{#1}) }}
\renewcommand{\S}{{\mathcal{S}_\delta}}
\newcommand{\Odn}[1]{{\mathcal{O}_{\deltah}(\Omega^{#1})}}
\newcommand{\Odhn}[1]{{\mathcal{O}_{\deltah}\!(\Omega_h^{#1})}}
\newcommand{\Odt}[1]{{\mathcal{O}_{\deltah,\mathcal{T}}^{#1}}}
\newcommand{\eoc}[1]{$\text{eoc}_{\texttt{#1}}$}
\newcommand{\jump}[1]{[\![ #1 ]\!]}
\newcommand{\norm}[1]{\Vert #1 \Vert}
\newcommand{\enorm}[1]{\vert\!\vert\!\vert #1 \vert\!\vert\!\vert}
\newcommand{\innerprod}[2]{\left( #1, #2 \right)}
\newcommand{\consist}{\mathcal{E}_C^n}
\newcommand{\interpol}{\mathcal{E}_I^n}
\newcommand{\err}{\mathbb{E}}
\newcommand{\E}[1]{\mathcal{E}_{#1}}
\newcommand{\T}{\mathcal T}
\newcommand{\Div}{\operatorname{\rm div}}
\newcommand{\rr}{\mathbb{R}}
\newcommand{\Qs}{\mathcal{Q}}
\newcommand{\Qsn}{\mathcal{Q}}
\def\Fh{\mathcal{F}_h}
\newcommand{\nbf}[1]{{\underline{\num{#1}}}}
\newcommand{\nit}[1]{\underline{\underline{\num{#1}}}}
\renewcommand{\O}{{\mathbf{\mathcal{O}_{\delta}}}}
\newcommand{\numQ}[1]{\num[round-precision=2,round-mode=places, scientific-notation=false]{#1}}
\newtheorem{assumption}{Assumption}[section]
\newtheorem{remark}{Remark}[section]
\newtheorem{lemma}{Lemma}[section]
\newtheorem{theorem}{Theorem}[section]
\begin{document}
\title{An Eulerian finite element method for PDEs in time-dependent domains}\thanks{C.L. was partially supported by the German Science Foundation (DFG) within the project ``LE 3726/1-1''}\thanks{M.O. was partially supported by NSF through the Division of Mathematical Sciences grant 1717516}
\author{Christoph Lehrenfeld}\address{Institute for Numerical and Applied Mathematics, University of G\"ottingen, G\"ottingen, Germany, \\ \email{lehrenfeld@math.uni-goettingen.de}%
}
\author{Maxim A. Olshanskii}\address{Department of Mathematics, University of Houston, Houston, Texas 77204-3008, \\ \email{molshan@math.uh.edu}
}
\date{\today}

\begin{abstract}
The paper introduces a new finite element numerical method for the solution of partial differential equations on evolving domains. The approach uses a completely Eulerian description of the domain motion.
The physical domain is embedded in a triangulated computational domain and can overlap  the time-independent background mesh in an arbitrary way. The numerical method is based on finite difference discretizations of time derivatives and a standard geometrically unfitted finite element method with an additional stabilization term in the spatial domain.
The performance and analysis of the method rely on the fundamental extension result in Sobolev spaces for functions defined on bounded domains. This paper includes a complete stability and error analysis, which accounts for discretization errors resulting from finite difference and finite element approximations as well as for geometric errors coming from a possible approximate recovery of the physical domain. Several numerical examples illustrate the theory and demonstrate the practical efficiency of the method.
\end{abstract}
%
%
\subjclass{65M12, 65M60, 65M85}
\keywords{evolving domains, unfitted FEM, cutFEM}
\maketitle

\section{Introduction}
Many mathematical models in physics, biology, chemistry and engineering involve partial differential equations (PDEs) posed on moving domains. Numerical simulations based on these models often face a challenge of building  discretizations, which handle accurately and efficiently both Lagrangian (displacement, material derivative) and Eulerian (temperature, concentration, local fluxes, etc.) quantities.  Several numerical approaches to accomplish this are known from the literature. For example, in the popular arbitrary Lagrangian--Eulerian approach~\cite{hirt1974arbitrary} one transforms the problem from a moving domain to a fixed reference domain through an artificial mapping and further applies meshing to the reference domain for the discretization purpose. The approach can be used both with spatial and space--time Galerkin formulations~\cite{masud1997space,tezduyar1992new}. The method allows for good resolution of the evolving domain boundary with a fitted mesh, but is known to be less practical in the case of larger deformations and unable to handle motions with topological changes. To overcome this deficiency, several methods based on a pure Eulerian description of the domain motion
have been developed over the past decades. The immersed boundary method~\cite{peskin1977numerical,peskin2002immersed} uses a fixed time-independent mesh to discretize both Eulerian and Lagrangian variables, linked by the Dirac delta function  smoothed over several layers of mesh cells.  Numerical methods that treat prorogating interfaces in a sharp way were developed more recently using the unfitted finite element technologies such as extended finite element methods~\cite{moes1999finite} and `cut' finite elements~\cite{cutFEM}. It is natural to combine unfitted finite elements with  space--time variational formulations of PDE in moving domain, and this line of research was taken in \cite{chessa2004arbitrary} (for 1D problem) and more recently expanded in \cite{LR_SINUM_2013,L_SISC_2015,hansbo2016cut}, including PDEs posed on evolving manifolds~\cite{grande2014eulerian,olshanskii2014eulerian,olshanskii2014error}.
These geometrically unfitted finite element methods are based on a fully Eulerian view point and exploit
the idea of using time-independent background finite element spaces. Therefore, these discretizations simplify the construction of numerical methods for domains that exhibit strong deformations or even topology changes.

Space--time Galerkin methods enjoy solid mathematical foundation (at least for scalar conservation laws) and both low and high order methods are easily formulated. On a practical side however, the reconstruction of space--time domains for the purpose of numerical integration is a difficult and possibly time consuming part.
To compensate for that, the space--time method in \cite{hansbo2016cut} introduces a variational crime by applying a quadrature in time approach to approximate space--time integrals. The resulting method does not require a reconstruction of space--time domains but only domain approximations at discrete time instances. However, to the best of the authors knowledge there is no theoretical bound for the varational crime commited by the quadrature (in time) of this method.
Further, arising linear systems in all previously mentioned space--time methods that need to be solved for are typically considerably larger than those of time stepping methods based on finite differences.

In this paper we abandon the use of a space--time variational framework  and opt for a more straightforward (and commonly used in steady domains) approach, where time discretization is based on finite difference approximations and the (unfitted) finite element method is used to accommodate spatial variations. The approach is based on the fundamental result of the existence of continuous extension operators (from a bounded domain to $\mathbb{R}^d,~d=2,3$) in Sobolev spaces. The result allows to identify the solution to the PDE with its smooth extension and further to design a finite element method, which solves at each discrete time instance for this extended solution in the computational domain. The acquired numerical extension allows one to apply finite differences to handle time derivatives in the physical domain. The remarkable feature of the method is that no explicit information about the extension is required, but a suitable numerical approximation to it becomes available through adding a simple stabilization term  to a standard unfitted finite element method formulation. This term acts in a narrow band containing the physical domain boundary.  This extension mechanism is different from the one in the classical fictitious domain methods~\cite{glowinski1999distributed}, where the PDE is extended from the physical to the computational domain.

The ideas similar to those elaborated in the present paper, were recently developed in \cite{olshanskii2017trace,LOX_ARXIV_2017} for the case of PDEs on moving surfaces, where we  combined stationary unfitted finite element discretizations (known as trace FEM) with time discretization schemes based on finite difference approximations. In those papers, the combination of both approaches has been enabled by adding a stabilization term which acts as a normal extension and facilitates the transition of information from the surface of one time step to the surface of the next time step.
To carry over this idea to volumetric domains here, we require a different mechanism that acts as a `smooth' (rather than normal) extension in the unfitted finite element method.
The idea of extending finite element solutions from an active part of the mesh at one time step to the active part of the next time step in order to apply a method of lines type approach is also found in \cite[Section 3.6.3]{schottstabilized}.
In \cite{schottstabilized} however only direct neighbors are involved in the extension which leads to time step restrictions obeying a geometrical CFL condition, $\Delta t \leq c h$. Further, in \cite{schottstabilized} the method has been applied without any theoretical error analysis.
In this work we propose and analyze a discretization without such a time restriction of CFL-type.
For the stabilization we consider a so-called ghost penalty method \cite{B10} and discuss three different version of it which share the same essential theoretical properties.
As a useful byproduct of the ghost penalty stabilization, the method possesses robustness w.r.t. the cut configuration not only in terms of error but also in terms of the conditioning of linear systems.

The remainder of the paper is organized as follows. In Section \ref{s:form} we formulate the model problem under investigation and propose a semi-discretization in time based on the idea of extension operators in Sobolev spaces in Section \ref{s:time}. The full discrete version of the method which includes a stabilization acting as a discrete extension operator is presented in Section \ref{s:FEM}. The a priori error analysis of the scheme is treated in Section \ref{s:Analysis}. We demonstrate the performance of the method based on numerical examples in Section \ref{s:Numerics} before we conclude with final remarks and open problems in Section \ref{s:Conclusions}.

\section{Mathematical problem} \label{s:form}
Consider a time-dependent domain $\Omega(t)\subset \rr^d,~d=2,3$ that is sufficiently regular for each $t\in [0,T]$, $T>0$, and evolves smoothly. More precisely, we shall assume the existence of a one-to-one continuous mapping
\begin{equation}\label{mapping}
\Psi(t)\,:\,\Omega_0\to\Omega(t)\quad\text{for each}~t\in[0,T],
\end{equation}
from the  reference domain $\Omega_0\subset \rr^d$. Later for the analysis, we need that $\partial\Omega_0$ is piecewise $C^2$ and Lipschitz and  $\Psi\in C^{m+1}([0,T]\times\overline{\Omega_0})$, where $m\ge1$ is the polynomial degree of our finite element space.
A polygonal background domain $\wOm$ is chosen such that $\Omega(t)$ together with its neighborhood is contained in $\wOm$ for all times $t \in [0,T]$.

One example of the suitable setup is given by a smooth motion and deformation  of the material volume $\Omega(t)$, e.g., volume of fluid. If $\bw:\Omega(t)\to\rr^d$ is the material velocity of the particles from $\Omega(t)$, then $\Psi(t)$ can be defined as the Lagrangian mapping from $\Om{}{0}=\Omega(0)$ to $\Omega(t)$, i.e. for $y\in\Om{}{0}$, $\Psi(t,y)$ solves the ODE system
\begin{equation}\label{Lagrange}
\Psi(0,y)=y,\quad  \frac{\partial \Psi(t,y)}{\partial t}=\bw(t,\Psi(t,y)),\quad t\in[0,T].
\end{equation}
%
%
The conservation of a scalar quantity $u$ with a diffusive flux in $\Omega(t)$ then leads to the equation
\begin{equation} \label{transport}
  \frac{\partial u}{\partial t} + \Div(u\bw) - \alpha \Delta u=0\quad\text{on}~~\Omega(t), ~~t\in (0,T],
\end{equation}
 with initial condition $u(\bx,0)=u_0(\bx)$ for $\bx \in \Omega(0)$. Here $\alpha>0$ is the constant diffusion coefficient. This is the model example of a parabolic PDE posed in a time-dependent domain that we use in this paper to formulate and analyze the finite element method. For simplicity we shall assume that the flux (which is only the diffusive flux) is zero on the boundary $\Gamma(t) := \partial \Omega(t)$,
\begin{equation} \label{bc}
\nabla u \cdot \bn = 0 \quad \text{on}~~\Gamma(t),~~t\in (0,T],
\end{equation}
where $\bn$ is the unit normal on $\Gamma(t)$.

These are the appropriate boundary conditions for a conserved quantity $u$. To see this, we apply Reynolds' transport theorem for moving domains:
$$
\frac{d}{dt} \int_{\Omega(t)} \!\!\!\!\!\! u ~ dx =  \int_{\Omega(t)} \frac{\partial}{\partial t} u ~ dx + \int_{\partial \Omega(t)} \!\!\! (\bw \cdot \bn) u ~ ds
=  \int_{\Omega(t)} \frac{\partial u}{\partial t} + \Div(u \bw) ~ dx
=  \int_{\Omega(t)} \!\!\!\!\!\! \alpha \Delta u ~ dx
=  \int_{\partial \Omega(t)} \!\!\!\!\!\! \alpha \nabla u \cdot \bn ~ ds.
$$
Later, we comment on the numerical treatment of other boundary conditions, see Remark~\ref{rem_bc}.

We emphasise that the proposed finite element method applies in a more general situation when one is only given $\Omega(t_n)$ or its approximation in some time instances $t_n\in[0,T]$ without any explicit information about $\Psi$. For the analysis, we need to assume that such mapping from the reference domain to the physical one at least exists and can be extended to a one-to-one mapping from a sufficiently large
neighborhood $\mathcal{O}(\Om{}{0})$ of $\Omega_0$ to $\mathcal{O}(\Om{}{}(t))$. This extended mapping, also denoted by $\Psi$, is assumed smooth, $\Psi \in C^{m+1}([0,T]\times\mathcal{O}(\Om{}{0}))$. 

For the analysis, we shall also need the notion of the space--time domain, where the problem \eqref{transport} is posed,
and its spatial neighborhood:
\[
\Qs = \bigcup\limits_{t \in (0,T)} \Omega(t) \times \{t\},\quad
\mathcal{O}(\Qs) = \bigcup\limits_{t \in (0,T)} \mathcal{O}(\Omega(t)) \times \{t\},\quad  \Qs\subset\mathcal{O}(\Qs)\subset \Bbb{R}^{d+1}.
\]


\section{Discretization in time} \label{s:time}
We first consider the discretization in time only. 
The goal of this paper is the study of a fully discrete method, but the treatment of the semi-discrete problem gives some insight and serves for the purpose of better exposition.

\subsection{Time discretization method}
For simplicity of notation, consider the uniform time step $\Delta t=T/N$, and let $t_n=n\Delta t$ and $I_n=[t_{n-1},t_n)$.
Denote by $u^n$ an approximation of $u(t_n)$, define $\Om{n}{} := \Omega(t_n)$, $\Gamma^{n} := \Gamma(t_n)$.

We define the $\delta$-neighborhood of $\Om{}{}(t)$ by
\begin{equation}
  \O{}(\Om{}{}(t)) := \{ \bx \in \rr^d\, : \dist(\bx,\Om{}{}(t)) \leq \delta \}.
\end{equation}
We require the neighborhood to be large enough
so that
\begin{equation}\label{ass1}
  \Om{n}{} \subset \O(\Om{n-1}{})\quad\text{for}~n=1,\dots,N.
\end{equation}
This can be assured by setting $\delta$ proportional to $\Delta t$ times the maximum normal velocity of $\Gamma$,
\begin{equation}\label{delta}
  \delta = c_\delta \winfn \Delta t, \text{ with } \winfn := \max_{t \in [0,T)} \norm{\bw \cdot \bn}_{L^{\infty}(\Gamma(t))} \text{ and } c_\delta > 1.
\end{equation}
In its turn, we also assume for each $n$ that $\O(\Om{n-1}{})\subset\mathcal{O}(\Om{n-1}{})$, a discretization independent ambient neighborhood, where the extended mapping $\Psi$ is defined. This is always the case for $\Delta t$ not too big.


In the time stepping method we combine the solution for $u^n$ on $\Om{n}{}$ with its extension on $\O{}(\Om{n}{})$ in every time step. This guarantees that $u^{n-1}$ is well-defined on $\Om{n}{}$, and we can approximate  the time derivative by a finite difference.   Thus, the implicit Euler method for
\eqref{transport} is
\begin{equation}\label{e:ImEuler}
\frac{u^n- \E{} u^{n-1}}{\Delta t}+\Div(u^n\bw) -\alpha\Delta u^n=0,\qquad\text{on}~~\Om{n}{}.
\end{equation}
Here, $\E{} : H^1(\Om{n-1}{}) \to  H^1(\mathcal{O}(\Om{n-1}{}))$ is a continuous extension operator. A suitable extension operator is defined in section \ref{s:extension} below based on the mapping $\Phi$ from \eqref{mapping}. Although $\E{}$ appears explicitly in \eqref{e:ImEuler}, it turns out that in the finite element setting (section~\ref{s:FEM}) a suitable extension can be defined implicitly and there is no need in any knowledge about $\Phi$.

\subsubsection*{Variational formulation in space}
We seek for $u^n \in H^1(\Om{n}{})$ such that for all $v \in H^1(\Om{n}{})$ there holds
\begin{equation} \label{e:contunfFEM1}
  \int_{\Om{n}{}} \frac{1}{\Delta t} u^n v\, dx + a^n(u^n,v) =
  \int_{\Om{n}{}} \frac{1}{\Delta t} \E{} u^{n-1} v \, dx.
\end{equation}
Here, $a^n(\cdot,\cdot)$ denotes the bilinear form for diffusion and convection where we use a  skew-symmetric formulation for the convection:
\begin{align}
  a^n(u,v) := &
  \int_{\Om{n}{}} \alpha \nabla u\cdot \nabla v\, dx
  + \label{e:an}
  \frac12 \int_{\Om{n}{}} (\bw\cdot\nabla u) \, v
  - (\bw\cdot\nabla v) \, u dx \\
 & + \frac12 \int_{\Om{n}{}} \Div(\bw) u v \, dx
 + \frac12\int_{\Gamma^n} (\bw \cdot  \bn) u v\, dx, \qquad u,v \in H^1(\Om{n}{}). \nonumber
\end{align}

We mention that the method has a straight-forward extension to higher order time integration, e.g. the BDF2 scheme, cf. Remark \ref{rem:bdf2} below. For ease of presentation we focus on the implicit Euler method first.

\subsubsection*{Unique solvability}
To guarantee unique solvability in every time step, we ask for coercivity of the left-hand side bilinear form in \eqref{e:contunfFEM1} with respect to $\Vert \cdot \Vert_{H^1(\Om{n}{})}$.

\begin{lemma}\label{lem:an}
For $u \in H^1(\Om{n}{})$ there holds
  \begin{equation}
  a^n(u,u) \geq \frac{\alpha}{2} \norm{\nabla u}_{L^2(\Om{n}{})}^2 - \xi \norm{u}_{L^2(\Om{n}{})}^2,
  \end{equation}
i.e. \eqref{e:contunfFEM1} is uniquely solvable if
  \begin{equation}\label{eq:xi}
  \Delta t < \xi^{-1} :=
  2\left( \Vert \Div(\bw) \Vert_{L^\infty(\Om{n}{})} + {c_{\Omega}^2  \Vert \bw \cdot \bn \Vert_{L^\infty(\Om{n}{})}^2}/{(4 \alpha) + \alpha} \right)^{-1}
\end{equation}
where $c_\Omega$ is the constant of the multiplicative trace inequality $ \Vert u \Vert_{L^2(\Gamma^n)}^2 \leq c_{\Omega} \norm{u}_{L^2(\Om{n}{})} \norm{u}_{H^1(\Om{n}{})} $.
\end{lemma}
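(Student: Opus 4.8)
The plan is to estimate each of the four terms in $a^n(u,u)$ separately, using only integration by parts, the $L^\infty$ bounds on $\bw$ and $\Div(\bw)$, the multiplicative trace inequality, and a single Young's inequality to absorb the boundary contribution into the gradient term. First I would observe that the skew-symmetric convection term vanishes on the diagonal: setting $v=u$ in the second line of \eqref{e:an} gives
\[
  \frac12 \int_{\Om{n}{}} (\bw\cdot\nabla u)\, u - (\bw\cdot\nabla u)\, u \, dx = 0,
\]
so only the diffusion term, the $\Div(\bw)$ term, and the boundary term survive.

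Next I would bound the diffusion term trivially from below by $\alpha \norm{\nabla u}_{L^2(\Om{n}{})}^2$, and bound the $\Div(\bw)$ term from below by $-\tfrac12 \norm{\Div(\bw)}_{L^\infty(\Om{n}{})} \norm{u}_{L^2(\Om{n}{})}^2$. The only delicate piece is the boundary term $\tfrac12 \int_{\Gamma^n} (\bw\cdot\bn)\, u^2\, ds$, which I would first bound in absolute value by $\tfrac12 \norm{\bw\cdot\bn}_{L^\infty(\Gamma^n)} \norm{u}_{L^2(\Gamma^n)}^2$, then invoke the multiplicative trace inequality $\norm{u}_{L^2(\Gamma^n)}^2 \le c_\Omega \norm{u}_{L^2(\Om{n}{})} \norm{u}_{H^1(\Om{n}{})}$. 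Writing $\norm{u}_{H^1(\Om{n}{})}^2 = \norm{u}_{L^2(\Om{n}{})}^2 + \norm{\nabla u}_{L^2(\Om{n}{})}^2$ and applying Young's inequality in the form $ab \le \tfrac{\alpha}{2\beta} a^2 + \tfrac{\beta}{2\alpha} b^2$ with a suitably chosen weight $\beta$, the boundary term is controlled by $\tfrac{\alpha}{2}\norm{\nabla u}_{L^2}^2$ plus a multiple of $\norm{u}_{L^2}^2$; here one must track constants carefully so that exactly half the coercivity of the diffusion term is consumed, leaving $\tfrac{\alpha}{2}\norm{\nabla u}_{L^2(\Om{n}{})}^2$ on the right.

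Collecting these bounds yields $a^n(u,u) \ge \tfrac{\alpha}{2}\norm{\nabla u}_{L^2(\Om{n}{})}^2 - \xi \norm{u}_{L^2(\Om{n}{})}^2$ with $\xi$ of the stated form (the term $c_\Omega^2 \norm{\bw\cdot\bn}_{L^\infty}^2/(4\alpha)$ coming precisely from the Young split of the boundary term, the $\norm{\Div(\bw)}_{L^\infty}$ term from the divergence piece, and the spare $\alpha$ from the leftover $\norm{u}_{L^2}^2$ produced by the $H^1$-norm splitting). The left-hand-side bilinear form of \eqref{e:contunfFEM1} is $(\tfrac{1}{\Delta t} - \xi)\norm{u}_{L^2}^2 + \tfrac{\alpha}{2}\norm{\nabla u}_{L^2}^2$ on the diagonal, which is coercive over $H^1(\Om{n}{})$ as soon as $\Delta t < \xi^{-1}$; continuity is immediate from the $L^\infty$ bounds and the trace inequality, so Lax--Milgram delivers unique solvability. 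The only real obstacle is the bookkeeping in the Young's inequality step — choosing the split parameter so the constants line up exactly with \eqref{eq:xi} — which is routine but must be done with care.
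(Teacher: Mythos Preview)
Your proposal is correct and follows essentially the same route as the paper: observe that the skew-symmetric convection part vanishes on the diagonal, bound the $\Div(\bw)$ term directly, and handle the boundary term via the multiplicative trace inequality followed by a Young split that sacrifices exactly half of the diffusion coercivity. The paper carries out precisely this computation (without explicitly naming Lax--Milgram), arriving at the same constant $\xi$.
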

\begin{proof}
  Due to
\begin{equation*}
 a^n(u,u) \geq \alpha \Vert \nabla u \Vert_{L^2(\Om{n}{})}^2 - \frac12 \Vert \Div(\bw) \Vert_{L^\infty(\Om{n}{})} \Vert u \Vert_{L^2(\Om{n}{})}^2 - \frac12 \Vert \bw \cdot \bn \Vert_{L^\infty(\Gamma^n)} \Vert u \Vert_{L^2(\Gamma^n)}^2,
\end{equation*}
the multiplicative trace inequality and Young's inequality we have
\begin{align*}
  \Vert \bw \cdot \bn \Vert_{L^\infty(\Gamma^n)} \Vert u \Vert_{L^2(\Gamma^n)}^2  &\leq c_{\Omega} \Vert \bw \cdot \bn \Vert_{L^\infty(\Gamma^n)} \norm{u}_{L^2(\Om{n}{})} \norm{u}_{H^1(\Om{n}{})} \\
  &\leq {c_\Omega^2 \Vert \bw \cdot \bn \Vert_{L^\infty(\Gamma^n)}^2}/{(4\alpha)} \  \norm{u}_{L^2(\Om{n}{})}^2 + \alpha \norm{u}_{H^1(\Om{n}{})}^2 \\
  &= \left( {c_\Omega^2 \Vert \bw \cdot \bn \Vert_{L^\infty(\Gamma^n)}^2}/{(4\alpha)} + \alpha \right) \norm{u}_{L^2(\Om{n}{})}^2 + \alpha \norm{\nabla u}_{L^2(\Om{n}{})}^2
\end{align*}
which yields
\begin{equation*}
  \Delta t^{-1} \Vert u \Vert_{L^2(\Om{n}{})}^2 + a^n(u,u) \geq \left( \Delta t ^{-1} -     \xi
  \right) \Vert u \Vert_{L^2(\Om{n}{})}^2 + \frac{\alpha}{2} \Vert \nabla u \Vert_{L^2(\Om{n}{})}^2 .
\end{equation*}
\end{proof}

\begin{remark}[Dirichlet boundary conditions]\rm
  If we consider $u = g_D$ for a given function $g_D \in H^{\frac12}(\Gamma(t))$ as boundary condition that is implemented through the Sobolev spaces in the variational formulation instead of \eqref{bc}, the last integral involving the boundary integral in \eqref{e:an} would vanish and the condition in Lemma \ref{lem:an} would simplify to $\Delta t <2 \norm{\Div(\bw)}_{L^\infty(\Om{n}{})}^{-1}$.
\end{remark}


\subsection{Stability of the semi-discrete method} \label{s:stab:semi-disc}
In this section we show a numerical stability bound for $u^n$.
We use the following abbreviations in norms and scalar products for functions $u,v$ in a domain $G$:
$\innerprod{u}{v}_{G} := \innerprod{u}{v}_{L^2(G)}$.


\subsubsection{Extension operator} \label{s:extension}
To define an extension operator from the time dependent domain to its neighborhood, we first assume such an extension on the initial domain and define a corresponding extension for $t>0$ by transformation.

Let $m\ge1$ be a fixed integer, since the boundary of $\Om{}{0}$ is piecewise smooth and Lipschitz, there is a continuous linear extension operator $\widehat{\E{0}} : L^2(\Om{}{0}) \to  L^2(\mathcal{O}(\Om{}{0}))$,  ($\widehat{\E{0}} u = u$ in $\Om{}{0}$), with the following  properties~\cite[Section~VI.3.1]{stein2016singular}:
\begin{equation}\label{ExtBound0}
  \Vert \widehat{\E{0}} u \Vert_{W^{k,p}(\mathcal{O}(\Om{}{0}))} \leq \hat C_{\Om{}{0}} \Vert u \Vert_{W^{k,p}(\Om{}{0})},\quad \text{for}~u \in W^{k,p}(\Om{}{0}),~~k=0,\dots,m+1,~~1\le p\le\infty.
\end{equation}

Note that we can always decompose $v$ from  $L^2(\Om{}{0})$ as $v=u+|\Om{}{0}|^{-1}\int_{\Om{}{0}}v\,dx$ and define the extension  $\E{0}v=\widehat{\E{0}}u+|\Om{}{0}|^{-1}\int_{\Om{}{0}}v\,dx$, then the updated extension operator satisfies same bounds as in \eqref{ExtBound0} and thanks to the Poincar\'{e} inequality
\begin{equation}\label{ExtBound}
\begin{aligned}
  \Vert \nabla\E{0} v \Vert_{\mathcal{O}(\Om{}{0})} \leq C_{\Om{}{0}} \Vert \nabla v \Vert_{\Om{}{0}}, ~~
  \quad \text{for}~u \in H^{1}(\Om{}{0}).
\end{aligned}
\end{equation}
{
We shall need the following commutation property of the extension operator and time derivative.
\begin{lemma}\label{L_exch}
Let $Q_0:=\Om{}{0} \times (0,T)$ and $\mathcal{O}(Q_0):=\mathcal{O}(\Om{}{0}) \times (0,T)$. For $v\in L^2(Q_0)$ such that $v_t\in L^2(Q_0)$, it holds $ \left(\E{0} v\right)_t\in L^2(\mathcal{O}(Q_0))$ and
\begin{equation*}
 \left(\E{0} v\right)_t=\E{0} v_t\quad\text{in}~~\mathcal{O}(Q_0).
\end{equation*}
\end{lemma}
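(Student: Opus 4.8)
The plan is to exploit that the extension operator $\E{0}$ acts only on the spatial variable and is bounded and linear, hence commutes with both integration and (distributional) differentiation in $t$.

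First I would pass to a Bochner-space description. By Fubini's theorem the hypotheses $v\in L^2(Q_0)$, $v_t\in L^2(Q_0)$ mean that $t\mapsto v(\cdot,t)=:v(t)$ belongs to $L^2(0,T;L^2(\Om{}{0}))$ with distributional derivative $t\mapsto v_t(t)$ again in $L^2(0,T;L^2(\Om{}{0}))$; in particular both maps are strongly measurable with values in $L^2(\Om{}{0})$. Set $(\E{0}v)(t):=\E{0}(v(t))\in L^2(\mathcal{O}(\Om{}{0}))$. Since $\E{0}$ is a bounded linear operator, $t\mapsto\E{0}(v(t))$ is strongly measurable (it is a continuous image of a strongly measurable map, equivalently an a.e.\ limit of images of simple functions), and \eqref{ExtBound0} with $k=0$, $p=2$ gives
\[
\int_0^T\norm{\E{0}(v(t))}_{L^2(\mathcal{O}(\Om{}{0}))}^2\,dt\le \hat C_{\Om{}{0}}^2\int_0^T\norm{v(t)}_{L^2(\Om{}{0})}^2\,dt=\hat C_{\Om{}{0}}^2\norm{v}_{L^2(Q_0)}^2<\infty ,
\]
so $\E{0}v\in L^2(0,T;L^2(\mathcal{O}(\Om{}{0})))\cong L^2(\mathcal{O}(Q_0))$; the same estimate applied to $v_t$ shows $\E{0}v_t\in L^2(\mathcal{O}(Q_0))$.

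Next I would check the identity by testing against an arbitrary $\varphi\in C_c^\infty((0,T))$. A bounded linear operator commutes with the Bochner integral, so
\[
\int_0^T (\E{0}v)(t)\,\varphi'(t)\,dt=\E{0}\!\left(\int_0^T v(t)\,\varphi'(t)\,dt\right)=\E{0}\!\left(-\int_0^T v_t(t)\,\varphi(t)\,dt\right)=-\int_0^T(\E{0}v_t)(t)\,\varphi(t)\,dt ,
\]
the middle equality being the definition of the weak $t$-derivative of $v$ as an $L^2(\Om{}{0})$-valued function. Hence $\E{0}v_t$ is the distributional time derivative of $\E{0}v$, and since we already know $\E{0}v_t\in L^2(\mathcal{O}(Q_0))$ this yields $(\E{0}v)_t=\E{0}v_t\in L^2(\mathcal{O}(Q_0))$, which is the claim.

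There is no genuine obstacle here: the only point requiring care is the interchange of $\E{0}$ with the $t$-integral, which is the standard fact that a bounded linear operator between Banach spaces commutes with the Bochner integral (approximate $t\mapsto v(t)\varphi'(t)$ by simple functions and pass to the limit using boundedness of $\E{0}$). The remainder is bookkeeping to identify $L^2(Q_0)$ with $L^2(0,T;L^2(\Om{}{0}))$ and $\mathcal{O}(Q_0)$-integrability with the Bochner norm. Equivalently, one could argue with difference quotients $h^{-1}(v(t+h)-v(t))$, using linearity of $\E{0}$ and their $L^2(Q_0)$-convergence to $v_t$, but the distributional argument is cleaner.
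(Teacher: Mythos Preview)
Your proof is correct and is actually cleaner than the paper's. The paper argues in three steps: it first mollifies $v$ in all variables to get a sequence $v^m\in C^\infty(Q_0)$ with $v^m\to v$ and $v^m_t\to v_t$ in $L^2(Q_0)$, then verifies $(\E{0}v^m)_t=\E{0}v^m_t$ for each smooth $v^m$ by a pointwise Taylor expansion in $t$ together with the $L^\infty$-boundedness of $\E{0}$ from \eqref{ExtBound0}, and finally passes to the limit by testing against $\eta\in C_c^\infty(\mathcal{O}(Q_0))$. Your argument bypasses the mollification and the smooth case entirely by working directly in the Bochner framework and invoking the commutation of a bounded linear operator with the Bochner integral; it needs only the $L^2\to L^2$ bound on $\E{0}$, not the $L^\infty$ bound. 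The only implicit step you rely on that the paper does not is the (standard) identification of the weak partial $t$-derivative on $Q_0$ with the $L^2(\Om{}{0})$-valued distributional derivative, and conversely on $\mathcal{O}(Q_0)$; this is routine via tensor-product test functions. In short, both approaches work, but yours is shorter and uses strictly less about $\E{0}$.
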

\begin{proof}
The result follows from the linearity and continuity of $\E{0}$ and a density argument. For completeness we included the elementary proof in the appendix.
\end{proof}
}
The mapping $\Psi(t)$ from \eqref{mapping} defines a diffeomorphism at every time $t$ between $\Om{}{0}$ and $\Om{}{}(t)$ and $\mathcal{O}(\Om{}{0})$ and $\mathcal{O}(\Om{}{}(t))$, respectively. Using this mapping we define the extension
\begin{equation} \label{e:extensiont_cont}
\E{} u(t) := (\E{0} (u \circ \Psi(t))) \circ \Psi^{-1}(t),\quad \text{for each}~t \in [0,T].
\end{equation}
Note that $\E{}u$ can be also seen as an extension of $u$ from $\Qs$ to $\mathcal{O}(\Qs)$.
Further we shall assume certain regularity of solution to \eqref{transport} in terms of space--time anisotropic spaces
\footnote{The definition differs from that of Bochner-type spaces in time-dependent domains found in \cite{alphonse2015abstract}, but suffices for what follows.}
\[
L^\infty(0,T;H^{k}(\Om{}{}(t))):=\{v\in L^2(Q)\,:\, v \circ \Psi(t)\in H^k(\Om{}{0})~\text{for a.e.}~t\in(0,T)~~\text{and}~~\operatornamewithlimits{ess\,sup}\limits_{t\in(0,T)}\|v\circ \Psi(t)\|_{H^k(\Om{}{0})}<\infty\},
\]
$k=0,\dots,m+1.$ Thanks to the smoothness of $\Psi$, it holds
\[
\operatornamewithlimits{ess\,sup}\limits_{t\in(0,T)}\|v(t)\|_{H^k(\Om{}{}(t))}<\infty \quad\text{for}~v\in L^\infty(0,T;H^{k}(\Om{}{}(t))).
\]
For $v\in L^2(Q)$, $v_t$ denotes weak  partial derivative w.r.t. the time variable, if it exists as an element of $L^2(Q)$.


 We need the following properties of the extension.
\begin{lemma} \label{lem:extt}
  For $u\in L^\infty(0,T;H^{m+1}(\Om{}{}(t)))\cap W^{2,\infty}(\Qs)$ there holds
\begin{subequations}\label{u_bound_a}
\begin{align}
\|\E{}u\|_{H^{k}(\O(\Om{}{}(t)))}&\le c_{L\ref{lem:extt}a} \|u\|_{H^{k}(\Om{}{}(t))}, \quad {\small k=0,\dots,m+1},\label{u_bound_a1}\\
  \Vert \nabla (\E{} u) \Vert_{\O(\Om{n}{}(t))} & \leq c_{L\ref{lem:extt}b} \Vert \nabla u \Vert_{\Om{n}{}(t)},\label{u_bound_a2}\\
  \|\E{}u\|_{W^{2,\infty}(\O(\Qs))}&\le c_{L\ref{lem:extt}c} \|u\|_{W^{2,\infty}(\Qs)}, \label{u_bound_a3}
\end{align}
\end{subequations}
 with constants $c_{L\ref{lem:extt}a}$, $c_{L\ref{lem:extt}b}$,  $c_{L\ref{lem:extt}d}$ depending only on $\Psi$. Furthermore, for
$u\in L^\infty(0,T;H^{m+1}(\Om{}{}(t)))$ such that $u_t\in L^\infty(0,T;H^{m}(\Om{}{}(t)))$ it holds
\begin{equation}\label{u_bound_b}
\|(\E{}u)_t\|_{H^{m}(\O(\Om{}{}(t)))}\le c_{L\ref{lem:extt}e} (\|u\|_{H^{m+1}(\Om{}{}(t))}+\|u_t\|_{H^{m}(\Om{}{}(t))}),
\end{equation}
where $c_{L\ref{lem:extt}e}$ depends only on $\Psi$.
\end{lemma}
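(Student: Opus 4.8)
The plan is to reduce everything to the corresponding estimates on the fixed reference domain $\Om{}{0}$, where the extension operator $\E{0}$ has the clean mapping properties \eqref{ExtBound0}–\eqref{ExtBound}, and then transport back via the diffeomorphism $\Psi(t)$. The key point is that \eqref{e:extensiont_cont} defines $\E{}u(t)$ as a pullback: $\E{}u(t) = (\E{0}(u\circ\Psi(t)))\circ\Psi^{-1}(t)$. Since $\Psi\in C^{m+1}([0,T]\times\mathcal{O}(\Om{}{0}))$ with $\Psi(t)$ and $\Psi^{-1}(t)$ uniformly bounded together with their derivatives up to order $m+1$ (uniform in $t$ by compactness of $[0,T]$ and smoothness), composition with $\Psi(t)$ or $\Psi^{-1}(t)$ is a bounded operator on each $H^k$, $k=0,\dots,m+1$, and on $W^{2,\infty}$, with constants depending only on $\Psi$. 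This is the standard chain-rule / change-of-variables estimate for Sobolev norms under smooth diffeomorphisms.

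First I would establish \eqref{u_bound_a1}: write $\|\E{}u(t)\|_{H^k(\O(\Om{}{}(t)))} \le C(\Psi)\|\E{0}(u\circ\Psi(t))\|_{H^k(\mathcal{O}(\Om{}{0}))} \le C(\Psi)\hat C_{\Om{}{0}}\|u\circ\Psi(t)\|_{H^k(\Om{}{0})} \le C(\Psi)^2\hat C_{\Om{}{0}}\|u\|_{H^k(\Om{}{}(t))}$, using in turn the pullback bound on $\O(\Om{}{}(t))\to\mathcal{O}(\Om{}{0})$, property \eqref{ExtBound0}, and the pullback bound on $\Om{}{0}\to\Om{}{}(t)$; here one also needs $\Psi(t)$ to map $\O(\Om{}{}(t))$ into $\mathcal{O}(\Om{}{0})$, which is part of the standing assumption $\O(\Om{n-1}{})\subset\mathcal{O}(\Om{n-1}{})$. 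Estimate \eqref{u_bound_a2} follows the same way but uses the Poincaré-corrected bound \eqref{ExtBound} in the middle step so that only $\|\nabla u\|$ appears on the right (the gradient norms on the reference domain control each other via $\Psi$ up to lower-order terms, which are absorbed using the split $v = u + |\Om{}{0}|^{-1}\int v$). Estimate \eqref{u_bound_a3} is the $W^{2,\infty}$ analogue: since $\Psi\in C^{m+1}$ with $m\ge1$, composition preserves $W^{2,\infty}$ and \eqref{ExtBound0} with $p=\infty$, $k=2$ gives the bound; one works on the space–time cylinder $\mathcal{O}(\Qs)$ but the spatial pullback is uniform in $t$ and the time-derivative contributions involve only $\partial_t\Psi$, which is bounded.

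For the time-derivative bound \eqref{u_bound_b}, the plan is to differentiate \eqref{e:extensiont_cont} in $t$. By Lemma \ref{L_exch}, $(\E{0}v)_t = \E{0}v_t$ on the reference cylinder, so with $v = u\circ\Psi(t)$ we get, by the chain rule, $v_t = u_t\circ\Psi(t) + (\nabla u\circ\Psi(t))\cdot\partial_t\Psi(t)$. Then differentiating the outer composition with $\Psi^{-1}(t)$ produces, again by the chain rule, $(\E{}u)_t = (\E{0}v_t)\circ\Psi^{-1}(t) + \big(\nabla(\E{0}v)\circ\Psi^{-1}(t)\big)\cdot\partial_t\Psi^{-1}(t)$. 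Taking the $H^m$ norm and using the pullback bounds plus \eqref{ExtBound0} for each term: the first term is controlled by $\|v_t\|_{H^m(\Om{}{0})} \lesssim \|u_t\|_{H^m(\Om{}{}(t))} + \|\nabla u\|_{H^m(\Om{}{}(t))} \lesssim \|u_t\|_{H^m} + \|u\|_{H^{m+1}}$ (here $\partial_t\Psi\in C^m$ acts as a multiplier on $H^m$, which is where $\Psi\in C^{m+1}$ is used); the second term is controlled by $\|\E{0}v\|_{H^{m+1}(\mathcal{O}(\Om{}{0}))} \lesssim \|u\|_{H^{m+1}(\Om{}{}(t))}$, again with $\partial_t\Psi^{-1}\in C^m$ as a multiplier. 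Summing gives \eqref{u_bound_b}.

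The main obstacle, and the only genuinely technical point, is the multiplier estimate: verifying that products of the form $(\text{smooth }C^m\text{ function})\times(H^m\text{ function})$ are bounded in $H^m$, and that all the chain-rule terms generated when differentiating compositions up to the needed order stay inside the spaces claimed. This is routine but bookkeeping-heavy, and it is precisely why the hypothesis demands $\Psi\in C^{m+1}$ rather than merely $C^m$ — one loses one derivative of $\Psi$ relative to $u$ in \eqref{u_bound_b}, and one needs $m+1$ derivatives of $\Psi$ to handle $H^{m+1}$ pullbacks in \eqref{u_bound_a1}. I would state these product/composition facts as standard (citing a Sobolev-calculus reference) rather than reprove them, and confine the written proof to exhibiting the three-step chain estimate above, deferring the elementary time-derivative commutation to Lemma \ref{L_exch} and the appendix as the authors already do.
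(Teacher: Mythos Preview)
Your proposal is correct and follows essentially the same approach as the paper: reduce to the reference domain via the pullback formula \eqref{e:extensiont_cont}, invoke \eqref{ExtBound0}--\eqref{ExtBound} for $\E{0}$, and transport back using the $C^{m+1}$ smoothness of $\Psi$; for \eqref{u_bound_b} the paper likewise differentiates the composition, applies Lemma~\ref{L_exch} to commute $\partial_t$ with $\E{0}$, and then estimates the two chain-rule terms exactly as you describe. The paper is terser for \eqref{u_bound_a1}--\eqref{u_bound_a3} (simply citing the standard transformation/chain-rule arguments) but writes out the same chain of inequalities you give for \eqref{u_bound_b}.
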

\begin{proof}
  The proof of \eqref{u_bound_a1}--\eqref{u_bound_a3} follows by the standard arguments based on the  transformation formulas \eqref{e:extensiont_cont}, the differentiation chain rule,  the smoothness assumption for the mapping: $\Psi\in C^{m+1}([0,T]\times\overline{\Omega_0})$,   \eqref{ExtBound0}--\eqref{ExtBound}, Lemma~\ref{L_exch} and $\O(\Om{}{}(t)) \subset \mathcal{O}(\Om{}{}(t))$, $\O(Q) \subset \mathcal{O}(Q)$. We draft the proof of \eqref{u_bound_b}, since it requires a little bit more computations. By the definition of the extension, one gets $(\E{}u)_t=(\E{0} (u \circ \Psi(t)))_t \circ \Psi^{-1}(t)+
  (\Psi^{-1})_t\nabla(\E{0} (u \circ \Psi(t))) \circ \Psi^{-1}(t)$. Thanks to the smoothness of $\Psi$ it holds
  \[
     \|(\E{}u)_t\|_{H^{m}(\O(\Om{}{}(t)))}\le c(\|\nabla(\E{0} (u \circ \Psi(t)))\|_{H^{m}(\O(\Om{}{0}))}+\|(\E{0} (u \circ \Psi(t)))_t\|_{H^{m}(\O(\Om{}{0}))}),
  \]
Using the result of Lemma~\ref{L_exch} and the smoothness of the mapping again, we proceed with
   \[
  \begin{split}
     \|(\E{}u)_t\|_{H^{m}(\O(\Om{}{}(t)))}&\le  c(\|\E{0} (u \circ \Psi(t))\|_{H^{m+1}(\O(\Om{}{0}))}+\|\E{0}( (u \circ \Psi(t))_t)\|_{H^{m}(\O(\Om{}{0}))})\\
             & \le c(\|u \circ \Psi(t)\|_{H^{m+1}(\Om{}{0})}+\|(u \circ \Psi(t))_t\|_{H^{m}(\Om{}{0})})\\
              & \le c(\|u \|_{H^{m+1}(\Om{}{}(t))}+\|\Psi_t(t) \nabla(u \circ \Psi(t))\|_{H^{m}(\Om{}{0})})
              +\|u_t \circ \Psi(t)\|_{H^{m}(\Om{}{0})})\\
               & \le c(\|u \|_{H^{m+1}(\Om{}{}(t))}+\|u_t \circ \Psi(t)\|_{H^{m}(\Om{}{}(t))}).
  \end{split}
  \]
\end{proof}
\smallskip

\subsubsection{Stability} Now we are ready to show the stability of the semi-discrete method. To avoid extra technical complication for piecewise smooth boundary, we shall assume that $\partial\Omega_0$ is $C^2$.

\begin{lemma}\label{l_est1} For $u\in H^1(\Om{n}{})$, $n=1,\dots, N$, there holds for any $\ep > 0$
\begin{equation}\label{est1}
  \| \E{} u \|_{\O{}(\Om{n}{})}^2 \le (1+(1+{\ep}^{-1})\, \delta \, c_{L\ref{l_est1}a})  \|u\|_{\Om{n}{}}^2 + \delta \, c_{L\ref{l_est1}b} \ep \| \nabla u \|_{\Om{n}{}}^2
\end{equation}
for constants $c_{L\ref{l_est1}a}$, $c_{L\ref{l_est1}b}$ independent of $\Delta t$, $n$ and $u$, once $\Delta t$ is sufficiently small.
\end{lemma}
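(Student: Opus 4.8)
The plan is to split off the part of $\O(\Om{n}{})$ lying outside $\Om{n}{}$, control it by a trace on $\Gamma^n$ plus a gradient term via a one–dimensional fundamental theorem of calculus along normal rays, and then redistribute those two contributions between $\|u\|_{\Om{n}{}}^2$ and $\|\nabla u\|_{\Om{n}{}}^2$ using the multiplicative trace inequality and Young's inequality.

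First, since $\E{}u=u$ on $\Om{n}{}$, one has $\|\E{}u\|_{\O(\Om{n}{})}^2=\|u\|_{\Om{n}{}}^2+\|\E{}u\|_{\Sigma}^2$ with $\Sigma:=\O(\Om{n}{})\setminus\overline{\Om{n}{}}$, a shell of width $\delta$ around $\Gamma^n$. Because $\partial\Om{}{0}$ is $C^2$ and $\Psi\in C^{m+1}$ with $m\ge 1$, each $\Gamma^n$ is $C^2$ with reach bounded below by a constant independent of $n$ over the compact interval $[0,T]$; hence for $\Delta t$ (and therefore $\delta=c_\delta\winfn\Delta t$) small enough the normal map $(\by,s)\mapsto\by+s\bn(\by)$ is a $C^1$-diffeomorphism from $\Gamma^n\times(-\delta,\delta)$ onto a tubular neighborhood of $\Gamma^n$, its Jacobian lies in $[\tfrac12,2]$, the image of $\Gamma^n\times(0,\delta]$ is exactly $\Sigma$, and the image of $\Gamma^n\times[-\delta,0)$ is contained in $\Om{n}{}$.

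For a suitable representative of the $H^1$ function $\E{}u$ (whose trace on $\Gamma^n$ equals $u|_{\Gamma^n}$, since $\E{}u=u$ on $\Om{n}{}$ and $\E{}u\in H^1$ across $\Gamma^n$), along a.e.\ normal ray $\E{}u(\by+s\bn(\by))=u(\by)+\int_0^s\nabla(\E{}u)(\by+r\bn(\by))\cdot\bn(\by)\,dr$. Squaring, using $(a+b)^2\le 2a^2+2b^2$ and Cauchy--Schwarz, integrating in $s\in(0,\delta)$ and then over $\Gamma^n$, and invoking the Jacobian bounds, one obtains
\[
\|\E{}u\|_{\Sigma}^2\ \le\ 4\,\delta\,\|u\|_{L^2(\Gamma^n)}^2+8\,\delta^2\,\|\nabla\E{}u\|_{\Sigma}^2
\]
(equivalently, one may cite the standard Poincar\'e/Friedrichs estimate on a thin shell, or first argue for smooth $u$ and pass to the limit by density). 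Now bound $\|\nabla\E{}u\|_{\Sigma}^2\le\|\nabla\E{}u\|_{\O(\Om{n}{})}^2\le c_{L\ref{lem:extt}b}^2\,\|\nabla u\|_{\Om{n}{}}^2$ by \eqref{u_bound_a2}, and estimate the boundary term with the multiplicative trace inequality $\|u\|_{L^2(\Gamma^n)}^2\le c_\Omega\|u\|_{L^2(\Om{n}{})}\|u\|_{H^1(\Om{n}{})}$ ($c_\Omega$ uniform in $n$), followed by $\|u\|_{L^2}\|u\|_{H^1}\le(1+\tfrac{1}{2\ep})\|u\|_{L^2(\Om{n}{})}^2+\tfrac{\ep}{2}\|\nabla u\|_{L^2(\Om{n}{})}^2$. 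Collecting everything, using $1+\tfrac1{2\ep}\le1+\ep^{-1}$, and noting that the remaining $8\,\delta^2 c_{L\ref{lem:extt}b}^2\|\nabla u\|_{\Om{n}{}}^2$ term is of order $\delta$ smaller than $\ep\,\delta\,\|\nabla u\|_{\Om{n}{}}^2$ once $\Delta t$ is small enough, one arrives at \eqref{est1} with, e.g., $c_{L\ref{l_est1}a}=4c_\Omega$ and $c_{L\ref{l_est1}b}=2c_\Omega+8c_{L\ref{lem:extt}b}^2$.

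The main obstacle is the geometric bookkeeping near $\Gamma^n$: justifying the tubular-neighborhood change of variables with reach and Jacobian bounds uniform in $n$ (this is where the $C^2$-regularity of $\partial\Om{}{0}$ and the smoothness of $\Psi$ are used), and making rigorous the fundamental-theorem-of-calculus identity for $\E{}u\in H^1$ along a.e.\ normal line together with the identification of its boundary trace with $u$. Once these are in place, only the multiplicative trace inequality and Young's inequality remain, which are routine.
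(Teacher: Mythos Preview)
Your approach is sound in spirit and takes a genuinely different route from the paper, but there is one small gap in the final absorption step.

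The paper does not pass through a trace on $\Gamma^n$ at all. It works level-set by level-set: with $\phi$ the signed distance function, the divergence theorem applied to $(\E{}u)^2\nabla\phi$ on $\Omega_r$ gives directly
\[
\int_{\Gamma_r}(\E{}u)^2\,ds \le C\big((1+\ep^{-1})\|\E{}u\|_{\Omega_r}^2 + \tfrac{\ep}{4}\|\nabla\E{}u\|_{\Omega_r}^2\big)
\]
for every $r\in(0,\delta)$; the right-hand side is then bounded uniformly in $r$ by the $L^2$ \emph{and} gradient extension estimates of Lemma~\ref{lem:extt}, and the co-area formula supplies exactly one factor of $\delta$. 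No multiplicative trace inequality is invoked, and no $\delta^2$ term ever appears.

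Your FTC-along-normal-rays argument instead produces $\|\E{}u\|_\Sigma^2 \le 4\delta\|u\|_{L^2(\Gamma^n)}^2 + 8\delta^2\|\nabla\E{}u\|_\Sigma^2$, and after the gradient extension bound the second term is $8\delta^2 c_{L\ref{lem:extt}b}^2\|\nabla u\|_{\Om{n}{}}^2$. You then assert this is ``of order $\delta$ smaller than $\ep\delta\|\nabla u\|^2$ once $\Delta t$ is small enough'' --- but that needs $8\delta c_{L\ref{lem:extt}b}^2 \le \ep$, so the smallness threshold on $\Delta t$ would depend on $\ep$. The lemma, however, asserts the bound for \emph{every} $\ep>0$ with a single $\Delta t$-threshold and constants $c_{L\ref{l_est1}a},c_{L\ref{l_est1}b}$ independent of $\ep$; your $\delta^2$ term cannot be absorbed into $\delta c_{L\ref{l_est1}b}\ep\|\nabla u\|^2$ uniformly in $\ep$. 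What you actually prove is $\|\E{}u\|_\Sigma^2 \le (1+\ep^{-1})\delta c_a\|u\|^2 + \delta(c_b\ep + c_c\delta)\|\nabla u\|^2$, which is slightly weaker than stated but still entirely sufficient for the downstream application in Lemma~\ref{lem:contunfFEM1}, where $\ep$ is fixed. To recover the exact form, replace the FTC step by a trace-type bound on each $\Gamma_r$ (rather than integrating outward from $\Gamma^n$), which is precisely the paper's device.
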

\begin{proof} The proof largely follows the arguments found in \cite[Theorem 1.5.1.10]{grisvard2011elliptic} and \cite[Lemma 4.10]{elliott2012finite}.
Let us define the strip $\S(\Om{n}{}) = \O(\Om{n}{}) \setminus \Om{n}{}$. Since
\[
 \| \E{} u \|_{\O{}(\Om{n}{})}^2= \| \E{} u \|_{\S(\Om{n}{})}^2+\| u \|_{\Om{n}{}}^2,
\]
we need to prove
 \begin{equation}\label{est1a}
  \| \E{} u \|_{\S(\Om{n}{})}^2 \le (1+{\ep}^{-1})\, \delta \, c_{L\ref{l_est1}a}  \|u\|_{\Om{n}{}}^2 + \delta \, c_{L\ref{l_est1}b} \ep \| \nabla u \|_{\Om{n}{}}^2.
\end{equation}
We define a  function $\hat \phi$ such that $\hat\phi$ is the signed distance function to $\Gamma^n$ in $S_\delta(\Om{n}{})$.
We have $\|\hat\phi\|_{C^2(\S(\Om{n}{}))}\le c_n$, for $\delta\le\Delta t$, with $c_n$ and $\Delta t$ depending only on the curvature of $\Gamma^n$ \cite{federer1959curvature}, and hence
\begin{equation}\label{phi}
\sup_{n=1,\dots,N}\|\hat\phi\|_{C^2(\S(\Om{n}{}))}\le c,
\end{equation}
with finite $c$ depending on $\Omega_0$ and $\Psi$ for sufficiently small $\Delta t$.  We set $\phi:=\E{}\hat\phi$ to be the extension of $\phi$ to $\O(\Om{n}{})$.  Let $\Gamma_r=\{x\in \S(\Om{n}{})\,:\,\phi(x)=r\}$, $r\in[0,\delta]$, $\bn_r$ the outward normal vector and $\Omega_r$ the $r$-neighborhood of $\Omega^n$, i.e., $\Gamma_r=\partial\Omega_r$. Applying the Green's formula, one shows the identity,
\[
\int_{\Gamma_r} (\E{}u)^2 \bn_r\cdot\nabla\phi\, ds= 2\int_{\Omega_r} (\E{}u)\nabla(\E{}u)\cdot\nabla\phi\, ds+
\int_{\Omega_r} (\E{}u)^2\Delta\phi\, ds.
\]
Using $\bn_r\cdot\nabla\phi=1$, Cauchy-Schwarz and Young's inequalities yields
\begin{equation*}
\begin{split}
\int_{\Gamma_r} (\E{}u)^2  ds&\le \|\phi\|_{C^2(\O(\Om{n}{}))}\left(\int_{\Omega_r} |\E{}u||\nabla(\E{}u)|ds+
\int_{\Omega_r} (\E{}u)^2 ds\right)\\ &\le \|\phi\|_{W^{2,\infty}(\O(\Om{n}{}))}\left((1+{\ep}^{-1})\,\|\E{}u\|_{\Omega_r}^2+
\frac\ep4 \| \nabla \E{}u \|_{\Omega_r}^2\right) .
\end{split}
\end{equation*}
Thanks to Lemma~\ref{lem:extt} and \eqref{phi} it holds $\|\phi\|_{W^{2,\infty}(\O(\Om{n}{}))}\le c$, with a constant $c$ independent of $n$. This, the embedding $\Omega_r\subset\O(\Om{n}{})$ and Lemma~\ref{lem:extt} again imply
 \begin{equation}\label{est1b}
\int_{\Gamma_r} (\E{}u)^2  ds \le C\left(c_{L\ref{lem:extt}a}^n(1+{\ep}^{-1})\,\|u\|_{\Omega^n}^2+
c_{L\ref{lem:extt}b}^n\frac\ep4 \| \nabla u \|_{\Omega^n}^2\right).
\end{equation}
By the co-area formula and using $|\nabla\phi|=1$ in $\S(\Om{n}{})$  we have
\[
\| \E{} u \|_{\S(\Om{n}{})}^2 =\int_0^\delta\int_{\Gamma_r} (\E{}u)^2 ds\,dr.
\]
Therefore integrating \eqref{est1b} over $r\in(0,\delta)$ yields \eqref{est1a}.
\end{proof}
\medskip
In the next lemma we show \emph{numerical stability} of the semi-discrete scheme.
\begin{lemma}\label{lem:contunfFEM1}
  For $\Delta t$ sufficiently small and $\{u^n\}_{n=1,\dots,N}$ the solution of \eqref{e:contunfFEM1} with initial data $u^0 \in L^2(\Om{}{0})$ there holds
  \begin{equation}\label{stab_semi}
\norm{u^k}_{\Om{k}{}}^2 + \Delta t  \sum_{n=1}^k \alpha/2 \ \norm{\nabla u^n}_{\Om{n}{}}^2 \le \exp(c_{L\ref{lem:contunfFEM1}} t_k)\|u^0\|_{\Om{}{0}}^2,\quad \text{for}~k=0,\dots,N,
\end{equation}
for a constant $c_{L\ref{lem:contunfFEM1}}$ that is independent of $\Delta t$ and $k$.
\end{lemma}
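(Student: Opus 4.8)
The plan is a discrete energy estimate combined with a discrete Gronwall inequality; the essential new point is that the coupling between consecutive time levels runs only through the extension operator and must be controlled by Lemma~\ref{l_est1}. First I would test \eqref{e:contunfFEM1} with $v=u^n$ and multiply by $\Delta t$:
\[
\norm{u^n}_{\Om{n}{}}^2 + \Delta t\, a^n(u^n,u^n) = \innerprod{\E{}u^{n-1}}{u^n}_{\Om{n}{}}.
\]
By Lemma~\ref{lem:an}, $\Delta t\, a^n(u^n,u^n) \ge \Delta t\,\tfrac{\alpha}{2}\norm{\nabla u^n}_{\Om{n}{}}^2 - \Delta t\,\xi\norm{u^n}_{\Om{n}{}}^2$, where $\xi$ may be chosen independent of $n$ by using the global $L^\infty$-bounds of $\bw$ and $\Div\bw$ and a uniform trace constant (available thanks to the smoothness of $\Psi$). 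On the right, Cauchy--Schwarz, Young's inequality and then \eqref{ass1}, i.e.\ $\Om{n}{}\subset\O(\Om{n-1}{})$, give
\[
\innerprod{\E{}u^{n-1}}{u^n}_{\Om{n}{}} \le \tfrac12\norm{u^n}_{\Om{n}{}}^2 + \tfrac12\norm{\E{}u^{n-1}}_{\O(\Om{n-1}{})}^2 ,
\]
so that the extension of $u^{n-1}$ is evaluated exactly over the set on which Lemma~\ref{l_est1} provides an estimate.

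The decisive step is to insert Lemma~\ref{l_est1} (with $n$ replaced by $n-1$): for every $\ep>0$,
\[
\norm{\E{}u^{n-1}}_{\O(\Om{n-1}{})}^2 \le \bigl(1+(1+\ep^{-1})\,\delta\,c_{L\ref{l_est1}a}\bigr)\norm{u^{n-1}}_{\Om{n-1}{}}^2 + \delta\,c_{L\ref{l_est1}b}\,\ep\,\norm{\nabla u^{n-1}}_{\Om{n-1}{}}^2.
\]
Here one uses crucially that $\delta=c_\delta\winfn\Delta t$ by \eqref{delta}, so that $\delta$ is of order $\Delta t$ with a constant depending only on the data. I then fix $\ep$ as a constant (independent of $\Delta t$ and $n$) small enough that the term $\delta\,c_{L\ref{l_est1}b}\,\ep\,\norm{\nabla u^{n-1}}_{\Om{n-1}{}}^2$ is dominated by a fraction of $\Delta t\,\tfrac{\alpha}{2}\norm{\nabla u^{n-1}}_{\Om{n-1}{}}^2$; with $\ep$ fixed, $(1+\ep^{-1})\,\delta\,c_{L\ref{l_est1}a}$ is then also of order $\Delta t$. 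Absorbing $\tfrac12\norm{u^n}_{\Om{n}{}}^2$ to the left (which leaves $(\tfrac12-\Delta t\,\xi)\norm{u^n}_{\Om{n}{}}^2\ge\tfrac14\norm{u^n}_{\Om{n}{}}^2$ for $\Delta t$ small) and moving that gradient term into the dissipation $\Delta t\,\tfrac{\alpha}{2}\norm{\nabla u^{n-1}}_{\Om{n-1}{}}^2$ inherited from step $n-1$, one arrives at a recursion
\[
E_n + \Delta t\,\tfrac{\alpha}{2}\norm{\nabla u^n}_{\Om{n}{}}^2 \le (1+C\Delta t)\,E_{n-1},\qquad E_n:=\norm{u^n}_{\Om{n}{}}^2+\Delta t\,\tfrac{\alpha}{2}\norm{\nabla u^n}_{\Om{n}{}}^2,
\]
with $C$ independent of $\Delta t$ and $n$.

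From this recursion the claim follows in a standard way: iterating gives $E_k\le(1+C\Delta t)^kE_0\le\exp(Ct_k)E_0$, while summing the recursion over $n$ telescopes the $E_n$ and leaves $\Delta t\sum_{n=1}^k\tfrac{\alpha}{2}\norm{\nabla u^n}_{\Om{n}{}}^2\le E_0+C\Delta t\sum_{n=0}^{k-1}E_n\le\exp(C't_k)E_0$; adding the two and relabelling the constant yields \eqref{stab_semi}. The first step $n=1$ has to be handled separately, because only $u^0\in L^2(\Om{}{0})$ is assumed and Lemma~\ref{l_est1} rests on an $H^1$ argument: there I would instead use the $L^2$-continuity \eqref{ExtBound0} of the extension operator together with $\O(\Om{0}{})\subset\mathcal{O}(\Om{}{0})$ to get $\norm{\E{}u^0}_{\O(\Om{0}{})}^2\le C\norm{u^0}_{\Om{}{0}}^2$, hence $E_1\le C\norm{u^0}_{\Om{}{0}}^2$, and then run the recursion from $n=2$. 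I expect the main obstacle to be precisely the balancing in the decisive step: $\ep$ must be kept bounded below by a fixed constant so that the zeroth-order coefficient stays $1+O(\Delta t)$ (which is what makes the discrete Gronwall work), yet small enough that the extra gradient term is absorbed by the dissipation — both are possible only because $\delta$ scales like $\Delta t$, and not like $h$ or a constant. One must also verify that the various smallness requirements on $\Delta t$ (for Lemma~\ref{lem:an}, for $\tfrac12-\Delta t\,\xi\ge\tfrac14$, and for $\O(\Om{n-1}{})\subset\mathcal{O}(\Om{n-1}{})$) are simultaneously met.
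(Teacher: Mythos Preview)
Your proof is correct and essentially the same as the paper's: test with (a multiple of) $u^n$, use Lemma~\ref{lem:an} for coercivity, bound $\norm{\E{}u^{n-1}}_{\O(\Om{n-1}{})}^2$ via Lemma~\ref{l_est1} with $\delta\sim\Delta t$, fix $\ep$ so that the extra gradient term is absorbed by the dissipation from the previous step, and finish with a discrete Gronwall argument. The paper tests with $2u^n$ and uses the polarization identity in place of your Cauchy--Schwarz/Young step, and it sums over $n$ directly rather than phrasing a recursion for $E_n$, but these are equivalent bookkeepings; your separate treatment of $n=1$ via the $L^2$-continuity of $\E{}$ (since only $u^0\in L^2(\Om{}{0})$ is assumed) is in fact more careful than the paper, which tacitly applies Lemma~\ref{l_est1} there as well.
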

\begin{proof}
  We test \eqref{e:contunfFEM1} with $2 u^n$ and get
  \begin{align}
    \norm{u^n}_{\Om{n}{}}^2 + \norm{u^n- \E{} u^{n-1}}_{\Om{n}{}}^2 + 2 \Delta t a^n(u^n,u^n) = \norm{\E{}u^{n-1}}_{\Om{n}{}}^2.
  \end{align}
  With Lemma \ref{lem:an}, Lemma \ref{l_est1} and $\delta$ from \eqref{delta} we get
  \begin{align}
    (1- & 2\xi \Delta t) \norm{u^n}_{\Om{n}{}}^2 + \Delta t \alpha  \norm{\nabla u^n}_{\Om{n}{}}^2 \leq \norm{\E{}u^{n-1}}_{\Om{n}{}}^2 \leq \norm{\E{}u^{n-1}}_{\O(\Om{n-1}{})}^2 \nonumber \\
     & \leq
 (1+(1+{\ep}^{-1})\, c_\delta \winfn c_{L\ref{l_est1}a} \Delta t)  \|u^{n-1}\|_{\Om{n-1}{}}^2 + c_\delta \winfn \Delta t \, c_{L\ref{l_est1}b} \ep \| \nabla u^{n-1} \|_{\Om{n-1}{}}^2.
  \end{align}
  We choose $\ep = {\alpha} (2 c_{L\ref{l_est1}b} c_\delta \winfn)^{-1} $ and obtain
  \begin{align}
    (1- &2 \xi \Delta t ) \norm{u^n}_{\Om{n}{}}^2 + \Delta t \alpha  \norm{\nabla u^n}_{\Om{n}{}}^2 \leq \norm{\E{}u^{n-1}}_{\Om{n}{}}^2 \leq \norm{\E{}u^{n-1}}_{\O(\Om{n-1}{})}^2 \nonumber \\
     & \leq
 (1+\bar{c} \Delta t)  \|u^{n-1}\|_{\Om{n-1}{}}^2 + \frac{\alpha}{2} \Delta t \, \| \nabla u^{n-1} \|_{\Om{n-1}{}}^2
  \end{align}
  with $\bar{c} = (1+{\ep}^{-1})\, c_\delta \winfn c_{L\ref{l_est1}a}$ independent of $\Delta t$.
  Summing up  over $n=1,\dots,k,~k\leq N$ yields
  \begin{align}
    (1- & 2\xi \Delta t ) \norm{u^k}_{\Om{k}{}}^2 + \Delta t \alpha/2  \sum_{n=1}^k  \norm{\nabla u^n}_{\Om{n}{}}^2 \leq \norm{u^0}_{\Om{}{0}}^2 + (2 \xi+\bar{c}) \Delta t \sum_{n=0}^{k-1} \| u^n \|_{\Om{n}{}}^2.
  \end{align}
  Assuming that $\Delta t$ is sufficiently small so that $\xi \Delta t < \frac14$, we apply the discrete Gronwall inequality which yields the results with $c_{L\ref{lem:contunfFEM1}} = \bar{c} + 2 \xi$.
\end{proof}

Now we turn to the fully discrete case. Besides the technical difficulties of passing from differential equations to algebraic and finite element functional spaces, we need to handle the situation, when the smooth domain $\Om{n}{}$ is approximated by a set of piecewise smooth $\Om{n}{h}$, $n=0,\dots,N$.

\section{Discretization in space and time} \label{s:FEM}
\subsection{Meshes and finite element spaces} \label{s:meshes}
Assume a  family of consistent  subdivisions of $\wOm$ into a quasi-uniform triangulation $\{\T_h\}_{h>0}$ consisting of simplexes with a characteristic mesh size $h$.
$V_h$ denotes the time-independent finite element space,
\begin{equation} \label{eq:Vh}
V_h:=\{v_h\in C(\Omega)\,:\, v_h|_S\in P_m(S), \forall S\in \mathcal{T}_h\},\quad m\ge1,
\end{equation}
where $P_m(S)$ is the space of polynomials of at most degree $m$ on $S$.
The domains $\Om{n}{},~n=1,\dots,N$, are approximated by discrete approximations $\Om{n}{h},~n=1,\dots,N$, e.g. using an approximated level set function, cf. Section \ref{sec:geomapprox}.

In the full discrete method, we combine the solution and the extension step that we have seen in the semi-discrete method by one stabilized solve on a discretely extended domain.
In every time step, we extend the domain $\Om{n}{h}$ by a layer of thickness $\delta_h$ where we choose $\delta_h$ so that $\Om{n+1}{h}$ is a subset of the extended domain to $\Om{n}{h}$, $  \delta_h > \winfn ~\Delta t$.
To this end, we define the \emph{active mesh}, the set of all elements that have some part in this extended domain,
\begin{equation}
  \Td{n} := \{ S \in \T_h: \dist(\bx,\Om{n}{h}) \leq \delta_h \text{ for some } \bx \in S\}, \quad
  \Odt{n} := \{ \bx \in S: S \in \Td{n} \}.
\end{equation}
See the left sketch in Figure \ref{fig:discretedomains} for an example.
\begin{figure}
  \begin{center}
    \includegraphics[width=0.66\textwidth]{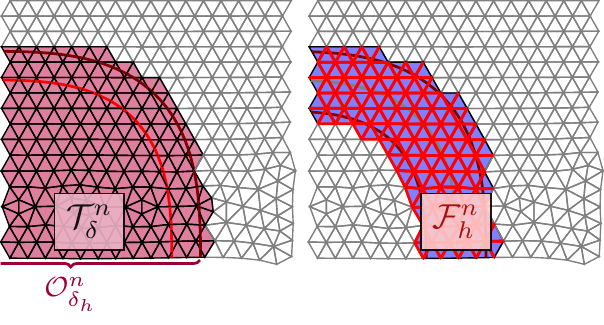}
  \end{center}
  \vspace*{-0.5cm}
  \caption{Sketch of active part of the mesh (left) and active facets (right) that are utilized in the stabilization.}
  \vspace*{-0.2cm}
  \label{fig:discretedomains}
\end{figure}
On these active meshes, we define the finite element spaces
\begin{equation}\label{defV}
V_h^n:= \{v \in C(\Odt{n})\,:\, v\in P_m(S), \forall~ S \in \Td{n} \},\quad m\ge1.
\end{equation}
These spaces are the restrictions of the time-independent bulk space $V_h$ on all simplices from $\Td{n}$.

\subsection{Variational formulation} \label{s:varform}
The numerical method is based on the semi-discrete formulation \eqref{e:contunfFEM1}. Instead of applying an extension step separately we add a stabilization term $s_h^n(\cdot,\cdot)$ that realizes a discrete version of the extension. It reads as:
For a given $u_h^0 \in V_h^0$ find  $u_h^n\in V_h^n$, $n=1,\dots,N$, satisfying
\begin{equation}\label{e:unfFEM1}
\int_{\Om{n}{h}} \frac{u_h^n-u_h^{n-1}}{\Delta t} v_h + \ahn(u^n_h, v_h)+ \gamma_s s_h^n(u_h^n,v_h) =0 \quad \text{ for all } v_h\in V_h^n;
\end{equation}
$s_h^n(\cdot,\cdot)$ is a stabilization bilinear form that is yet to be defined below, cf. Section \ref{s:gp}, $\gamma_s=\gamma_s(h,\delta_h)$ is a stabilization parameter, cf. Section \ref{s:gammas}, and
\begin{align}
  \ahn(u_h,v_h) := &
                      \int_{\Om{n}{h}} \alpha \nabla u_h\cdot \nabla v_h\, dx
  \label{e:anh}
  +
  \frac12 \int_{\Om{n}{h}} \left( (\bw^e\cdot\nabla u_h) \, v_h
  - (\bw^e\cdot\nabla v_h) \, u_h \right) \, dx \\
 & + \frac12 \int_{\Om{n}{h}} \Div(\bw^e) u_h v_h \, dx
 + \frac12\int_{\Gamma_h^n} (\bw^e \cdot  \bn) u_h v_h \, dx, \qquad u_h,v_h \in H^1(\Om{n}{h}). \nonumber
\end{align}
Here, $\we$ is a suitable smooth extension of the velocity field $\bw$ that is only defined on $\Om{n}{} \neq \Om{n}{h}$.

\subsection{Stabilization bilinear forms}\label{s:gp}
The stabilizing bilinear form $s_h^n(\cdot,\cdot)$ has multiple purposes. First, it should stabilize the solution of the problem \eqref{e:unfFEM1} due to irregular cuts. Secondly, it is responsible for the implicit definition of an extension to $\Odt{n}$. Thirdly, it provides condition number bounds that are independent of the cut position, cf. Remark \ref{rem:linalg} below.

We present three possible choices in the next subsections on how to choose the stabilization term. They are all in a very similar flavour and share the crucial properties needed in this method.
All these stabilizations have in common that they add stabilization terms on facets in the region of
the boundary $\Gamma_h^n = \partial \Om{n}{h}$. To this end, we define the elements that are in the boundary strip:
\begin{equation}
\TS{n} :=\{ S \in \Td{n}\, : \dist(\bx,\Gamma_h^n) \leq \delta_h \text{ for some } \bx \in S\} .
\end{equation}
We notice that the boundary strip includes cut elements, but possibly also some elements that are completely inside or outside of $\Om{n}{h}$. We define the set of facets between elements in $\Td{n}$ and $\TS{n}$:
\begin{equation} \label{e:deffhk}
  \Fh^n := \{ \overline{T_1} \cap \overline{T_2}\, : T_1 \in \Td{n},~ T_2 \in \TS{n}, T_1\neq T_2, \meas_{d-1} (\overline{T_1} \cap \overline{T_2}) > 0 \}.
\end{equation}
See the right sketch in Figure \ref{fig:discretedomains} for an example.
\subsubsection{``Direct'' version of the ghost penalty method} \label{s:gpdirect}
 For $F \in \Fh^n$ let $\omega_F$ be the facet patch, i.e. $\omega_F = T_1 \cup T_2$ for $T_1$ and $T_2$ as in the definition \eqref{e:deffhk}. We define for $u,v \in V_h^n$
\begin{equation}
  s_h^{n,\text{dir}}(u,v) := \sum_{F \in \Fh^n} s_{h,F}^{n,\text{dir}}(u,v) \quad \text{with} \quad s_{h,F}^{n,\text{dir}}(u,v):= \frac{1}{h^2} \int_{\omega_F} (u_1 - u_2) (v_1 - v_2) dx,
\end{equation}
where $u_1 = \mathcal{E}^P  u|_{T_1}$, $u_2 = \mathcal{E}^P  u|_{T_2}$ (and similarly for $v_1$, $v_2$) where $\mathcal{E}^P: P_m(S) \rightarrow P_m(\rr^d)$ is the canonical extension of a polynomial to $\rr^d$. This version of the ghost penalty stabilization has been proposed for the first time -- to the best of our knowledge -- in \cite{preussmaster}. Compared to other version of the ghost penalty method, cf. the sections below, this version has the advantage that an implementation of the bilinear form is only implicitly (through the extension $\mathcal{E}^P$) depending on the polynomial degree $m$.

For the analysis, we also define $s_h^{n,\text{dir}}(u,v)$ for arbitrary functions $u,v  \in L^2(\Odt{n})$. In this case, we set $u_1 = \mathcal{E}^P \Pi_{T_1} u|_{T_1}$, $u_2 = \mathcal{E}^P \Pi_{T_2} u|_{T_2}$
where $\Pi_{T_i}$ is the $L^2(T_i)$-projection into $P_m(T_i),~i=1,2$. We notice that for $v \in V_h^n$, $\Pi_{T_i} v|_{T_i} = v|_{T_i}$.
\subsubsection{LPS-type version of the ghost penalty method} \label{s:gplps}
The second version has been proposed for the first time in the original paper \cite{B10}. We call it the LPS (local projection stabilization)-type version of the ghost penalty method. It also formulates integrals over the facet patches $\omega_F$. But now, the deviation from a polynomial on the patch is penalized:
\begin{align} \label{e:lps}
  s_h^{n,\text{LPS}}(u,v) & :=\!\! \sum_{F \in \Fh^n} s_{h,F}^{n,\text{LPS}}(u,v) \\ \text{with} \ s_{h,F}^{n,\text{LPS}}(u,v) & :=
  \frac{1}{h^2} \int_{\omega_F} \!\! (u - \Pi_{\omega_F} u) (v - \Pi_{\omega_F} v)dx = \frac{1}{h^2} \int_{\omega_F} \!\! (u - \Pi_{\omega_F} u) v dx, \quad u,v \in L^2(\Odt{n}), \nonumber
\end{align}
where $\Pi_{\omega_F}: L^2(\omega_F) \rightarrow P_m(\omega_F)$ is the $L^2$ projection into the space of polynomials up to degree $m$ on $\omega_F$. We note that the last equality in \eqref{e:lps} holds due to the orthogonality of the $L^2$ projection.

\subsubsection{Derivative jump version of the ghost penalty method} \label{s:gpdjump}
The most well-known version of the ghost penalty stabilization is the following based on penalizing jumps in the (higher order) derivatives across facets, cf. e.g. \cite{massing2014stabilized,cutFEM,burman2014fictitious,schott2014new}:
\begin{equation}
  s_h^{n,\text{djmp}}(u,v) := \!\! \sum_{F \in \Fh^n} s_{h,F}^{n,\text{djmp}}(u,v) \ \text{with} \ s_{h,F}^{n,\text{djmp}}(u,v) := \!\! \sum_{k=0}^m \frac{h^{2k-1}}{k!^2} \! \int_{F} \jump{ \partial_{\bn_F}^k u } \jump{ \partial_{\bn_F}^k v } dx, \ u,v \in H^{m+1}(\Td{n}),
\end{equation}
where $\partial_{\bn_F}^k$ is the $k$-th directional derivative in the direction of the facet normal $\bn_F$. We note that the summand to $k=0$ is not required in an implementation due to the continuity of functions in $V_h^n$.

\subsection{Stabilization parameter $\gamma_s$} \label{s:gammas}
The stiffness between the unknowns on two elements $T \in \Td{n}$ and $T' \in \TS{n}$ that is induced by the stabilization bilinear form $s_h^n(\cdot,\cdot)$ depends reciprocally on the distance between the elements $T$ and $T'$ measured in terms of the number of facets that need to be crossed to walk through the mesh from $T$ to $T'$. This number  $K$ depends on the anisotropy between spatial and temporal discretization,
\begin{equation}
   K \leq c_{K,1} (1 + \delta_h/h), \qquad \delta_h \leq c_{K,2} \Delta t, \qquad \text{ with } c_{K,1}, c_{K,2} \text{ independent of } \Delta t \text{ and } h.
\end{equation}
Below in the analysis we will see that we require $\gamma_s \geq c_{K,3} K$ (with $c_{K,3}$ independent of $\Delta t$ and $h$) to compensate for the weakening of the stabilization for extension strips of increasing size. Hence, we choose
\begin{equation} \label{e:cgamma}
  \gamma_s = \gamma_s(h,\delta_h) = c_{\gamma} \, K \ \text{ with } \ c_\gamma>0 \text{ independent of } \Delta t \text{ and } h.
\end{equation}

\subsection{Additional remarks}
\begin{remark}[Unique solvability]\rm
Similarly to Lemma \ref{lem:an} we can easily check that
  \begin{align} \label{e:anhbound}
  \ahn(u_h,u_h) & \geq \frac{\alpha}{2} \norm{\nabla u_h}_{\Om{n}{h}}^2 - {\xi_h} \norm{u_h}_{\Om{n}{h}}^2,  \\
\label{eq:xih}
\text{ if } \qquad  \Delta t & <  \xi_h^{-1} := 2
  \left( \Vert \Div(\bw^e) \Vert_{L^\infty(\Om{n}{h})} + \alpha + {c_{\Omega_h}^2  \Vert \bw^e \cdot \bn \Vert_{L^\infty(\Om{n}{h})}^2}/{4 \alpha} \right)^{-1}.
\end{align}
Hence the left hand side bilinear form in \eqref{e:unfFEM1} is coercive on $V_h^n$ w.r.t. the norm
\begin{equation} \label{e:enorm}
\enorm{v}_n := \left(\frac{\alpha}{2} \norm{\nabla v}_{\Om{n}{h}}^2 + \norm{v}_{\Om{n}{h}}^2 + \gamma_s s_h^n(v,v) \right)^{1/2}.
\end{equation}
That this is actually a norm on $V_h^n$ is due to the properties of $s_h^n(\cdot,\cdot)$ treated below, see Lemma \ref{lem:gp}.
\end{remark}

\begin{remark}[Implementation of Dirichlet boundary conditions] \label{rem_bc}\rm
  We comment on the use and implementation of Dirichlet boundary conditions. If we consider Dirichlet boundary conditions $u = g_D$, we suggest to use (the unfitted version of) Nitsche's method for its implementation. In this case, the following bi- and linear forms have to be added to the discretization in \eqref{e:unfFEM1}:
  \begin{align}
    n_h^n(u_h,v_h) & := \int_{\Gamma_h^n}\{ (- \nabla u_h \cdot \bn) v_h + (- \nabla v_h \cdot \bn) u_h + \lambda_h u_h v_h \}\, d s,  \\
    g_h^n(v_h) & := \int_{\Gamma_h^n} g_D^e \, (- \nabla v_h \cdot \bn + \lambda_h v_h + \frac12 (\bw^e \cdot \bn) v_h ) \, d s,
  \end{align}
where $g_D^e$ is a suitable extension of $g_D$ from $\Gamma_n$ to $\Gamma_n^h$. Coercivity of the arising left hand side  bilinear form is then obtained for sufficiently large $\lambda_h$ and $\gamma_s$, cf., e.g., \cite{burman2012fictitious}.
\end{remark}

\section{Analysis of the fully discrete method} \label{s:Analysis}
In this section we carry out the numerical analysis of the fully discrete method.

\subsection{Preliminaries and notation} \label{s:prelim}
In order to reduce the repeated use of generic but unspecified constants,  further in the paper we write $x\lesssim y$ to state that the inequality  $x\le c y$ holds for quantities $x,y$ with a constant $c$, which is independent of the mesh parameters $h$, $\Delta t$, time instance $t_n$, and the position of $\Gamma$ over the background mesh. Similarly we give sense to $x\gtrsim y$; and $x\simeq y$ will mean that both $x\lesssim y$ and $x\gtrsim y$ hold.

In the analysis we require different domains stemming from the extension. We define strips which are \emph{sharp} in the sense that they do not include full elements. These are the boundary strips
\begin{equation}
\Sdh{n} :=\{ \bx \in \wOm \,: \dist(\bx,\Gamma_h^n) \leq \delta_h\} \quad\text{and}\quad \Sdhp{n} :=\{ \bx \in \wOm\setminus \Om{n}{h} \,: \dist(\bx,\Gamma_h^n) \leq \delta_h\}.
\end{equation}
Analogously we define $\Sd{n}$ and $\Sdp{n}$ to $\Om{n}{}$. Further, we define the overlaps $\Odhn{n} := \Sdh{n} \cup \Om{n}{h}$, $\Odn{n} := \Sd{n} \cup \Om{n}{}$.
We notice that $\Sdh{n}$ includes points from the interior of $\Om{n}{h}$ as well as points that are outside of $\Om{n}{h}$.
All elements that have some part in the strip $\Sdh{n}$ (or $\Sdhp{n}$) are collected in $\TS{n}$(or $\TSp{n}$).
We specify
\begin{equation} \label{e:delta}
  \delta_h = c_{\delta_h} \winfn ~\Delta t \quad \text{ with } \quad 1 < c_{\delta_h} < c_{\delta},
\end{equation}
and have that the size of the extension strip scales with $\delta_h$,  $| \Sdh{n} | \simeq \delta_h$.
The set of elements that have some part in $\Om{n}{h}$ is denoted by
\begin{equation*}
  \T^n:=\{ S\in\mathcal{T}_h\,:\, \meas_{d}(S\cap\Om{n}{h})>0\}.
\end{equation*}
We refer to Figures \ref{fig:discretedomains} and \ref{fig:discretedomains2} for sketches of the different domains and parts of the mesh.

In the analysis below we require that $\delta$ is sufficiently large so that for $n=1,\dots,N$
\begin{equation}\label{neigh_cond}
  \Odt{n} \subset\O{}(\Om{n}{})
  \qquad \text{and} \qquad \Om{n}{h} \subset \O{}(\Om{}{}(t)),~ t\in I_n.
\end{equation}
For the discrete extension layer, we have with \eqref{e:delta} that there holds
\begin{equation}\label{cond1}
  c_{\delta_h} \text{ sufficiently large implies }\Om{n}{h}
  \subset \Odt{n-1}, \quad n=1,\dots,N.
\end{equation}
This condition is the discrete analog to \eqref{ass1} and it is essential for the well-posedness of the method.

\begin{figure}
  \begin{center}
    \includegraphics[width=0.99\textwidth]{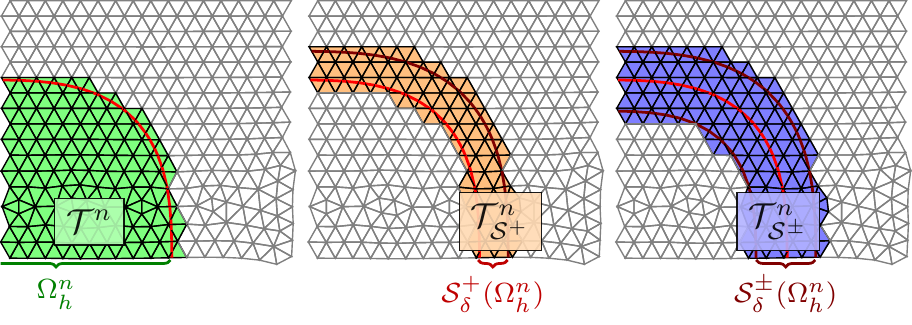}
  \end{center}
  \vspace*{-0.5cm}
  \caption{Sketch of discrete domains and different parts of the mesh.}
  \vspace*{-0.2cm}
  \label{fig:discretedomains2}
\end{figure}

\subsection{Geometry approximation} \label{sec:geomapprox}
We assume that the approximation of the geometry is of higher order in the sense that
\begin{equation}
  \dist(\Om{n}{},\Om{n}{h}) \lesssim h^{q+1},
\end{equation}
where $q$ is the geometry order of approximation and we assume that integrals on $\Om{n}{h},~n=1,\dots,N$ can be computed accurately.
Furthermore, we assume that there is a mapping $\Phi: \Odhn{n} \rightarrow \Odn{n}$ that allows to map from the approximated (extended) domain to the exact (extended) domain and assume  that the mapping $\Phi$ is well-defined, continuous and there holds $\Om{n}{} = \Phi(\Om{n}{h})$ and
$\Odn{n} = \Phi(\Odhn{n})$
and
    \begin{align} \label{eq:estPhi}
    \Vert \Phi - \operatorname{id} \Vert_{L^\infty(\Odhn{n})}
    \lesssim h^{q+1}, \quad
    \Vert D \Phi - I \Vert_{L^\infty(\Odhn{n})}
    \lesssim h^{q}.
    \end{align}
Further, for $h$ sufficiently small $\Phi$ is invertible.
Such a mapping has been constructed in \cite[Section 7.1]{gross2015trace} and \cite[Lemma 5.1]{olshanskii2016narrow} based on a level set based approximation of the geometries, cf. Remark~\ref{rem:lset} .
We use such a mapping to map from the discrete domain to the exact one. For $u \in V_h^n$ we define $\ul := u \circ \Phi^{-1}$.
Due to \eqref{eq:estPhi} we have that
\begin{subequations} \label{eq:equivvol}
\begin{align}
&  \Vert \ul \Vert_{\Odn{n}}^2 = \int_{\Odn{n}} (\ul)^2~dx = & \int_{\Odhn{n}} \underbrace{\operatorname{det}(D\Phi)}_{\simeq 1} u^2~dx &\simeq \Vert u \Vert_{\Odhn{n}}^2, &~~   \Vert u \Vert_{\Om{n}{h}}^2 &\simeq \Vert \ul \Vert_{\Om{n}{}}^2, \\
&  \text{ and similarly }
  & \Vert \nabla \ul \Vert_{\Odn{n}}^2 &\simeq \Vert \nabla u \Vert_{\Odhn{n}}^2, &~~   \Vert \nabla u \Vert_{\Om{n}{h}}^2 &\simeq \Vert \nabla \ul \Vert_{\Om{n}{}}^2.
\end{align}
\end{subequations}

\begin{remark}[Level set based domain descriptions] \rm \label{rem:lset}
  One popular method to obtain geometry approximations is based on level sets \cite{SethianBook}. Assume a level set function $\phi^n$ is known so that $\Om{n}{} = \{ \bx \in \wOm \mid \, \phi^n < 0 \}$. Further assume that $\phi^n$ is smooth and $\Vert \nabla \phi^n \Vert_2 \simeq 1$ close to the domain boundary $\partial \Om{n}{}$. In practice one typically only has an approximation $\phi_h^n$ to $\phi^n$ that may have been obtained from interpolation or solving a transport problem based on $\bw^e$ and an initial level set functions. Using this discrete approximation $\phi_h^n$ we then define $\Om{n}{h} = \{ \bx \in \wOm \mid \, \phi_h^n < 0\}$. If $\phi_h^n$ is a suitable good approximation to $\phi^n$ the approximation assumption \eqref{eq:estPhi} holds true where $q$ is the degree of the approximation for $\phi_h^n$. 
  Most often only the case $q=1$ is considered as only then $\Om{n}{h}$ is a polygonal domain which facilitates the implementation of numerical integration. Below, in the numerical examples we also restrict to $q=1$. However, we mention that also higher order geometrical accuracy can be realized for level set domains, cf. \cite{muller2013highly,lehrenfeld2015cmame,saye2015hoquad,fries2015,olshanskii2016numerical}.
\end{remark}

\subsection{Stability of discrete extensions through $s_h^n(\cdot,\cdot)$}
In this section we give some fundamental properties which hold for all variants of the stabilization $s_h^n(\cdot,\cdot)$ presented before. Let us mention that very recently in \cite{guerkanmassing2018} a similar analysis that unifies the properties of the ghost penalty versions from Section \ref{s:gplps} and Section \ref{s:gpdjump} has been use for stationary unfitted problems.

%
For all versions of the ghost penalty stabilizations mentioned above, we can split $s_h^n(\cdot,\cdot)$ into facet-contributions:
\begin{equation}
  s_h^n(\cdot,\cdot) = \frac{1}{h^2} \sum_{F\in \Fh^n} s_{h,F}^n(\cdot,\cdot),
\end{equation}
where $s_{h,F}^n(\cdot,\cdot)$ provides the following local stabilization property.
\begin{lemma} \label{lem:shf}
  Let $T_1 \in \TS{n}$ and $T_2 \in \Td{n}$, $T_1\neq T_2$ so that for $F = \overline{T_1}\cap\overline{T_2}$ there holds $\meas_{d-1}(F)>0$. Then we have for $~u|_{T_i} \in P_m(T_i),~i=1,2$ that there holds
\begin{subequations}
  \begin{align}
    \norm{u}_{T_1}^2 &\lesssim \norm{u}_{T_2}^2 + s_{h,F}^n(u,u),  \label{e:gplocal}\\
    \norm{\nabla u}_{T_1}^2 &\lesssim \norm{\nabla u}_{T_2}^2 + \frac{1}{h^2} s_{h,F}^n(u,u). \label{e:gplocalgrad}
  \end{align}
\end{subequations}
\end{lemma}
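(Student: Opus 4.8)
The plan is to establish \eqref{e:gplocal} and \eqref{e:gplocalgrad} separately for each of the three ghost penalty variants, exploiting finite-dimensionality and scaling. The unifying mechanism is that each $s_{h,F}^n(u,u)$ controls, up to $h^2$-weighting, the deviation of $u|_{T_1}$ and $u|_{T_2}$ from a \emph{single} polynomial living on the patch (or on all of $\rr^d$), so that once that common polynomial is known on $T_2$ it is essentially known on $T_1$.

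First I would treat the ``direct'' version. Here $u_1 = \mathcal{E}^P u|_{T_1}$ and $u_2 = \mathcal{E}^P u|_{T_2}$ are the canonical polynomial extensions, so $u_i \in P_m(\rr^d)$ and $u_i = u$ on $T_i$. By the triangle inequality $\norm{u}_{T_1} = \norm{u_1}_{T_1} \leq \norm{u_1 - u_2}_{T_1} + \norm{u_2}_{T_1}$. On the reference configuration, the map $P_m(\rr^d) \ni p \mapsto (\norm{p}_{T_1}, \norm{p}_{T_2}, \norm{p}_{\omega_F})$ involves equivalent norms on the finite-dimensional space $P_m$ once we fix the shape-regular patch $\omega_F$; a standard scaling argument (pull back to a reference patch, use norm equivalence on $P_m$, push forward) gives $\norm{u_2}_{T_1} \lesssim \norm{u_2}_{T_2} = \norm{u}_{T_2}$ and $\norm{u_1 - u_2}_{T_1} \lesssim \norm{u_1-u_2}_{\omega_F}$. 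Since $\norm{u_1-u_2}_{\omega_F}^2 = h^2 s_{h,F}^{n,\text{dir}}(u,u)$, we obtain \eqref{e:gplocal}. For \eqref{e:gplocalgrad} I would argue identically on gradients: $\norm{\nabla u}_{T_1} \leq \norm{\nabla(u_1-u_2)}_{T_1} + \norm{\nabla u_2}_{T_1}$, with $\norm{\nabla u_2}_{T_1} \lesssim \norm{\nabla u_2}_{T_2}$ by the same finite-dimensional scaling, and $\norm{\nabla(u_1-u_2)}_{T_1} \lesssim h^{-1}\norm{u_1-u_2}_{\omega_F}$ by an inverse inequality on $P_m$; squaring gives the stated $h^{-2} s_{h,F}^n$ weight.

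For the LPS-type version the argument is the same with $u_1 - u_2$ replaced by $(u - \Pi_{\omega_F}u)|_{T_i}$: write $\norm{u}_{T_1} \leq \norm{u - \Pi_{\omega_F}u}_{T_1} + \norm{\Pi_{\omega_F}u}_{T_1}$, bound the first term by $\norm{u-\Pi_{\omega_F}u}_{\omega_F} = h\, s_{h,F}^{n,\text{LPS}}(u,u)^{1/2}$, and bound $\norm{\Pi_{\omega_F}u}_{T_1} \lesssim \norm{\Pi_{\omega_F}u}_{T_2}$ by finite-dimensional scaling on $P_m(\omega_F)$; then $\norm{\Pi_{\omega_F}u}_{T_2} \leq \norm{u}_{T_2} + \norm{u-\Pi_{\omega_F}u}_{T_2} \leq \norm{u}_{T_2} + h\, s_{h,F}^{n,\text{LPS}}(u,u)^{1/2}$, and squaring closes \eqref{e:gplocal}; \eqref{e:gplocalgrad} follows by the same split plus an inverse inequality. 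The derivative-jump version is the classical case: one uses that for $p_1 \in P_m(T_1)$, $p_2 \in P_m(T_2)$ the full Taylor polynomial of $p_2$ about a point of $F$ reproduces $p_2$ on all of $\rr^d$, so $p_1 - p_2$ restricted to $F$ is a polynomial whose $L^2(F)$-norm, together with the norms of its normal-derivative jumps up to order $m$, controls $\norm{p_1-p_2}_{\omega_F}$ — this is exactly the estimate established in, e.g., \cite{burman2014fictitious,massing2014stabilized}, giving $\norm{u_1-u_2}_{\omega_F}^2 \lesssim h^2 s_{h,F}^{n,\text{djmp}}(u,u)$ — and then one proceeds as before.

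The main obstacle is making the finite-dimensional scaling step uniform: the constants in ``$\norm{p}_{T_1} \lesssim \norm{p}_{T_2}$ for $p \in P_m$'' and in the patch inverse inequalities must not depend on $h$, on $n$, or on the cut position. This is handled by quasi-uniformity and shape-regularity of $\{\T_h\}$: after affine pull-back to a reference patch (of which there are finitely many up to the shape-regularity constant), all the relevant quantities are norms on the fixed finite-dimensional space $P_m(\rr^d)$ or $P_m(\widehat\omega)$, hence equivalent with absolute constants, and the $h$-powers are tracked explicitly through the Jacobian scalings. I would state this scaling lemma once and invoke it for all three variants.
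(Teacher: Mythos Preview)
Your argument is correct and your identification of the unifying mechanism (a single polynomial on the patch, controlled on $T_2$, plus a deviation controlled by $s_{h,F}^n$) is exactly the right picture. The route, however, differs from the paper's in two respects that are worth noting.

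First, the paper does not work out the direct version from scratch but cites \cite[Lemma~3.1]{preussmaster} for \eqref{e:gplocal}; for \eqref{e:gplocalgrad} it observes that $\nabla u|_{T_i}\in [P_{m-1}(T_i)]^d$, applies \eqref{e:gplocal} \emph{componentwise to $\nabla u$} to obtain $\norm{\nabla u}_{T_1}^2\lesssim\norm{\nabla u}_{T_2}^2+s_{h,F}^{n,\text{dir}}(\nabla u,\nabla u)$, and then bounds $s_{h,F}^{n,\text{dir}}(\nabla u,\nabla u)\lesssim h^{-2}s_{h,F}^{n,\text{dir}}(u,u)$ by an inverse inequality. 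Your direct triangle-inequality split on the gradient is equally valid and arguably more transparent, but the paper's ``reuse \eqref{e:gplocal} on $\nabla u$'' trick is a little slicker once \eqref{e:gplocal} is in hand.

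Second, and more structurally, the paper handles the LPS version not by a parallel argument but by proving the one-line comparison $s_{h,F}^{n,\text{dir}}(v,v)\lesssim s_{h,F}^{n,\text{LPS}}(v,v)$ (insert $\Pi_{\omega_F}v$, use norm equivalence on $P_m$ to pass between $T_1$ and $T_2$), so that the LPS bounds are inherited for free from the direct ones. Your independent LPS proof works, but the paper's reduction is shorter and yields as a byproduct the ordering of the two stabilizations, which is of some independent interest. For the derivative-jump version both you and the paper defer to \cite{burman2012fictitious,massing2014stabilized}; your remark that the jump penalty controls $\norm{u_1-u_2}_{\omega_F}^2$ is precisely the content of those references.
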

\begin{proof}
For the derivative jump version the results are given in \cite{burman2012fictitious} for $m=1$ and extended to the higher order case in \cite[Lemma 5.1]{massing2014stabilized}.
For $s_{h,F}^{n,\text{dir}}(\cdot,\cdot)$ we use \cite[Lemma 3.1]{preussmaster} for \eqref{e:gplocal} and turn our attention to \eqref{e:gplocalgrad}:
Note that $\nabla u|_{T_i} \in [P_{m-1}(T_i)]^d,~i=1,2,$ so that we can apply \eqref{e:gplocal} componentwise, i.e.
  \begin{equation}
    \norm{\nabla u}_{T_1}^2 \lesssim \norm{\nabla u}_{T_2}^2 + s_{h,F}^{n,\text{dir}}(\nabla u,\nabla u).
  \end{equation}
  Now applying an inverse inequality gives $s_{h,F}^{n,\text{dir}}(\nabla u,\nabla u) \lesssim \frac{1}{h^2} s_{h,F}^{n,\text{dir}}(u,u)$.
Finally, with
  \begin{align}
    s_{h,F}^{n,\text{dir}}(v,v) & = \sum_{i=1,2} \Vert v_1 - v_2 \Vert_{T_i}^2
    \lesssim
    \sum_{i=1,2} \Vert v_1 - \Pi_{\omega_F} v \Vert_{T_i}^2 + \Vert v_2 - \Pi_{\omega_F} v \Vert_{T_i}^2
    = \sum_{i=1,2} \Vert v_i - \Pi_{\omega_F} v \Vert_{\omega_F}^2 \nonumber \\
\label{e:lpsboundsdir}
    & \lesssim
    \sum_{i=1,2} \Vert v_i - \Pi_{\omega_F} v \Vert_{T_i}^2
    =
    \sum_{i=1,2} \Vert v - \Pi_{\omega_F} v \Vert_{T_i}^2
    =
      \Vert v - \Pi_{\omega_F} v \Vert_{\omega_F}^2
    = s_{h,F}^{n,\text{LPS}}(v,v),
  \end{align}
equations \eqref{e:gplocal} and \eqref{e:gplocalgrad} follow also for $s_{h,F}^{n,\text{LPS}}(\cdot,\cdot)$.
\end{proof}

We now want to apply this stabilizing mechanism globally.
To this end, we make an assumption on the meshes $\TSp{n}$ and $\Td{n}$ which we comment on in Remark \ref{rem:overlap}
\begin{assumption} \label{ass:overlap}
  To every element in $\TSp{n}$ we require an element in $\Td{n}\setminus\TSp{n}$ that can be reached by repeatedly passing through facets in $\Fh^n$. We assume that there is mapping that maps every element $T \in \TSp{n}$ to such a path with the following properties. The number of facets passed through during this path is bounded by $K \lesssim (1 + \frac{\delta_h}{h})$. Further, every uncut element $T \in \Td{n}\setminus\TSp{n}$ is the final element of such a path in at most $M$ of these paths where $M$ is a number that is bounded independently of $h$ and $\Delta t$.
\end{assumption}
\begin{remark}\label{rem:overlap}\rm
We briefly explain why Assumption \ref{ass:overlap} is reasonable if the smooth domain boundary $\Gamma^{n}$ is sufficiently well-resolved by the mesh. To this end we construct a mapping between elements: $B: \TSp{n} \to \Td{n}\setminus\TSp{n}$.
We take an inner point $\bx_T$ (e.g. the circumcenter) of an element in $T \in \TSp{n}$ and map it by a distance of $\delta_h$ towards the interior, $\by_{T'} := \bx_T + \delta_h (\bp(\bx_T) - \bx_T)$ where $\bp$ is the closest point projection on $\Gamma^n$.
There is an element $T' \subset \Td{n}\setminus\TSp{n}$ that contains $\by_{T'}$ or can be reached from $\by_{T'}$ by passing only through a few ($\lesssim 1$) facets in $\Fh^n$. Hence, due to shape regularity, the number $K$ of facets in $\Fh^n$ that are intersected by the path $\{\bx_T + s (\bp(\bx_T)-\bx_T),~s\in[0,\delta_h]\}$ are bounded by $c (h+\delta_h)/h$.
Due to the geometrical construction of $B$ and the assumed resolution of the boundary we further have that only a few elements in $\TSp{n}$ will be mapped to the same element $T' \in \Td{n} \setminus \TSp{n}$, i.e. $|B^{-1}(T')| \leq M \lesssim 1,~\forall T' \in \Td{n} \setminus \TSp{n}$.
\end{remark}

With this assumption we have control on the overlap to obtain the following result.
\begin{lemma} \label{lem:gp}
  Under Assumption \ref{ass:overlap}, there holds for $u \in V_h^n$:
  \begin{subequations}
  \begin{align}
    \norm{u}_{\Odhn{n}}^2 &\leq    \norm{u}_{\Odt{n}}^2 \lesssim \norm{u}_{\Om{n}{h}}^2 + K\, h^2 \ s_{h}^n(u,u), \label{e:gpglobal}\\
    \norm{\nabla u}_{\Odhn{n}}^2 &\leq    \norm{\nabla u}_{\Odt{n}}^2 \lesssim \norm{\nabla u}_{\Om{n}{h}}^2 + K \  s_{h}^n(u,u). \label{e:gpglobalgrad}
  \end{align}
\end{subequations}
\end{lemma}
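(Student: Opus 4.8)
The plan is to lift the \emph{local} stabilization estimates of Lemma~\ref{lem:shf} to the whole active mesh $\Odt{n}$ by walking, from every element of the outer boundary strip, back into $\Om{n}{h}$ along the paths supplied by Assumption~\ref{ass:overlap}. The first inequality in each line of \eqref{e:gpglobal}--\eqref{e:gpglobalgrad} is immediate: any point of $\Odhn{n}=\Sdh{n}\cup\Om{n}{h}$ lies in a simplex $S\in\T_h$ which contains a point of distance $\le\delta_h$ from $\Om{n}{h}$ (itself, for a point of $\Om{n}{h}$; the point on $\Gamma_h^n$ for a point of $\Sdh{n}$), hence $S\in\Td{n}$, so $\Odhn{n}\subseteq\Odt{n}$. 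For the remaining inequality I would split $\norm{u}_{\Odt{n}}^2=\sum_{S\in\Td{n}}\norm{u}_S^2$ into the contributions of $\Td{n}\setminus\TSp{n}$ and of $\TSp{n}$. By Assumption~\ref{ass:overlap} the elements of $\Td{n}\setminus\TSp{n}$ are uncut, hence contained in $\overline{\Om{n}{h}}$; having pairwise disjoint interiors, they contribute at most $\norm{u}_{\Om{n}{h}}^2$ (and likewise $\norm{\nabla u}_{\Om{n}{h}}^2$ for the gradient). So it remains to bound $\sum_{T\in\TSp{n}}\norm{u}_T^2$ and $\sum_{T\in\TSp{n}}\norm{\nabla u}_T^2$.

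For a fixed $T\in\TSp{n}$ I take the path $T=T_0,T_1,\dots,T_L$ with $L\le K$, $T_L\in\Td{n}\setminus\TSp{n}$, and facets $F_1,\dots,F_L\in\Fh^n$ between consecutive elements. Applying \eqref{e:gplocal} (respectively \eqref{e:gplocalgrad}) element by element along the path I expect to obtain $\norm{u}_{T_0}^2\lesssim\norm{u}_{T_L}^2+\sum_{j=1}^{L}s_{h,F_j}^n(u,u)$ and $\norm{\nabla u}_{T_0}^2\lesssim\norm{\nabla u}_{T_L}^2+h^{-2}\sum_{j=1}^{L}s_{h,F_j}^n(u,u)$. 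Summing over $T\in\TSp{n}$ and invoking the two bounds in Assumption~\ref{ass:overlap}: every uncut element is the endpoint $T_L$ of at most $M\lesssim1$ of these paths, so the $\norm{u}_{T_L}^2$ terms sum to $\lesssim M\norm{u}_{\Om{n}{h}}^2$; and, by the essentially radial construction of the paths (cf.\ Remark~\ref{rem:overlap}), each facet $F\in\Fh^n$ is traversed by $\lesssim K$ of the paths, so the stabilization terms sum to $\lesssim K\sum_{F\in\Fh^n}s_{h,F}^n(u,u)=Kh^2 s_h^n(u,u)$ using $s_h^n=h^{-2}\sum_F s_{h,F}^n$. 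Together with the previous paragraph this gives \eqref{e:gpglobal}; for \eqref{e:gpglobalgrad} the extra factor $h^{-2}$ coming from \eqref{e:gplocalgrad} cancels the $h^2$, leaving the factor $K$ in front of $s_h^n(u,u)$.

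The main obstacle is the ``element by element along the path'' step. A naive iteration of Lemma~\ref{lem:shf} does \emph{not} suffice: each application carries a shape-regularity/polynomial-degree constant strictly larger than one (indeed the comparison of $L^2$-norms of a polynomial over two simplices separated by $O(K)$ facet crossings is not uniform in $K$), so iterating $L\lesssim K$ times would produce a constant growing with $K$, whereas the assertion is a clean linear factor $K$. Getting the sharp dependence requires a more careful, genuinely global argument. A useful observation in this direction is that for the gradient (at lowest order) one can avoid the iteration entirely: $\nabla u|_{T_0}$ equals $\nabla u|_{T_L}$ plus a telescoping sum of at most $K$ derivative jumps across $F_1,\dots,F_L$, and the triangle inequality combined with Cauchy--Schwarz over these $\le K$ terms -- each jump being controlled by $s_{h,F_j}^n(u,u)$ -- produces exactly the factor $K$ with no amplification of $\norm{\nabla u}_{T_L}^2$; the function estimate should then follow from the gradient estimate by splitting off the mean of $u$ over $\Om{n}{h}$ and estimating the remainder on the thin strip $\Sdh{n}$ with a trace/Poincar\'e argument in the spirit of Lemma~\ref{l_est1}. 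Making this uniform in the polynomial degree $m$ and in the path length is, I expect, the technical heart of the proof.
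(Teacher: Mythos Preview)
Your overall strategy in the first two paragraphs is exactly the paper's proof: split $\norm{u}_{\Odt{n}}^2$ into $\sum_{T\in\TSp{n}}+\sum_{T\in\Td{n}\setminus\TSp{n}}$, bound the second sum by $\norm{u}_{\Om{n}{h}}^2$, walk each $T\in\TSp{n}$ back to an uncut element along the path provided by Assumption~\ref{ass:overlap}, and then count --- each terminal element is hit at most $M\lesssim 1$ times, each facet at most $K$ times. The paper's proof is in fact terser than yours; it simply says ``repeatedly apply the previous lemma'' and invokes the $M$- and $K$-bounds, with no further detail.

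The concern you raise in your third paragraph is legitimate and is \emph{not} addressed in the paper either. Iterating $\norm{u}_{T_{j-1}}^2\le c_0\bigl(\norm{u}_{T_j}^2+s_{h,F_j}^n(u,u)\bigr)$ with $c_0>1$ along a chain of length $L\le K$ yields a factor $c_0^K$ in front of both $\norm{u}_{T_L}^2$ and the facet contributions, and since $K\simeq 1+\delta_h/h$ depends on $h$ and $\Delta t$ this cannot be hidden in $\lesssim$. The paper presents the same counting you do and leaves this point implicit; it does not supply the sharper chain estimate needed for the clean linear $K$ in the statement. So you have matched the paper's argument and, in addition, put your finger on a genuine gap that the paper's own proof shares. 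Your telescoping idea for the gradient (writing the piecewise-polynomial extension at $T_0$ as that at $T_L$ plus a sum of $\le K$ jump polynomials and using Cauchy--Schwarz over the sum) is the natural route to repair this, but be aware that for $m\ge 2$ one must also control how the $L^2$-norm of a degree-$m$ jump polynomial on $T_0$ compares with its norm on the distant patch $\omega_{F_j}$, which reintroduces a distance-dependent factor unless handled carefully.
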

\begin{proof}
We start with \eqref{e:gpglobal}. First, we notice that there holds
\begin{equation}
  \norm{u}_{\Odhn{n}}^2 \leq \norm{u}_{\Odt{n}}^2 = \sum_{T\in\Td{n}} \norm{u}_T^2 = \sum_{T\in\TSp{n}} \norm{u}_T^2 + \sum_{T\in\Td{n} \setminus \TSp{n}} \norm{u}_T^2 \leq \sum_{T\in\TSp{n}} \norm{u}_T^2 + \norm{u}_{\Om{n}{h}}^2.
\end{equation}
Now, we repeatedly apply the previous lemma to pass from each $T \in \TSp{n}$ to a $T' \in \Td{n} \setminus \TSp{n}$. Due to Assumption \ref{ass:overlap} every element in $T \in \Td{n} \setminus \TSp{n}$ will appear only $M \lesssim 1$ times and every facet $F \in \Fh^n$ will only appear $K$ times. Analogously, \eqref{e:gpglobalgrad} follows.
\end{proof}

In the analysis of the time stepping, the critical region to control is the extension strip $\Sdhp{n}$. We can bound the $L^2$ norm on this strip by norms on $\Odhn{n}$ and a scaling with $\delta_h$.
\begin{lemma} \label{lem:strip1} For $u \in H^1(\Odhn{n})$ and any $\ep > 0$ there holds
 \begin{equation} \label{e:strip}
\|u\|_{\Sdhp{n}}^2  \lesssim \delta_h (1 + \ep^{-1}) \|u\|_{\Odhn{n}}^2 + \delta_h \ep \| \nabla u\|_{\Odhn{n}}^2.
\end{equation}
\end{lemma}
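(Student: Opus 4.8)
The plan is to reduce the estimate on the sharp strip $\Sdhp{n}$ to a one-dimensional trace estimate integrated over a family of level sets, following the same pattern as in the proof of Lemma~\ref{l_est1}. First I would set up a smooth signed-distance-type function $\phi$ on a neighbourhood of $\Gamma_h^n$ so that the level sets $\Gamma_r := \{\phi = r\}$, $r \in [0,\delta_h]$, foliate the strip $\Sdhp{n}$, with $|\nabla \phi| \simeq 1$ and $\|\phi\|_{C^2}$ bounded uniformly in $n$ (for $\Delta t$, and hence $\delta_h$, small enough; here the curvature of $\Gamma_h^n$ is controlled uniformly because $\Gamma^n$ is $C^2$ and $\Gamma_h^n$ is an $O(h^{q+1})$ perturbation of it). Strictly speaking one works on the exact strip $\Sdp{n}$, where the signed distance to the smooth $\Gamma^n$ is genuinely $C^2$, and transfers the bound back through $\Phi$ using \eqref{eq:equivvol}; alternatively one argues directly on $\Gamma_h^n$ using that it is piecewise smooth and well-resolved. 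Either way, the geometric input is already available from the earlier discussion.

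**Key steps.** Then I would carry out the following. (i) Use the co-area formula with $|\nabla \phi| \simeq 1$ to write $\|u\|_{\Sdhp{n}}^2 \simeq \int_0^{\delta_h} \int_{\Gamma_r} u^2 \, ds \, dr$. (ii) For fixed $r$, apply Green's/the divergence theorem on the sub-strip $\Omega_r$ between $\Gamma_0 = \Gamma_h^n$ and $\Gamma_r$ (or between $\Gamma_r$ and an interior reference level set), exactly as in \eqref{est1b}: $\int_{\Gamma_r} u^2 (\bn_r \cdot \nabla\phi)\, ds = 2\int_{\Omega_r} u \, \nabla u \cdot \nabla\phi \, dx + \int_{\Omega_r} u^2 \Delta\phi \, dx$, which with $\bn_r \cdot \nabla\phi \simeq 1$, Cauchy--Schwarz and Young's inequality gives
\begin{equation*}
\int_{\Gamma_r} u^2 \, ds \lesssim (1+\ep^{-1}) \|u\|_{\Omega_r}^2 + \ep \|\nabla u\|_{\Omega_r}^2.
\end{equation*}
(iii) Bound $\|u\|_{\Omega_r} \le \|u\|_{\Odhn{n}}$ and $\|\nabla u\|_{\Omega_r} \le \|\nabla u\|_{\Odhn{n}}$ using $\Omega_r \subset \Odhn{n}$, so the right-hand side becomes independent of $r$. (iv) Integrate over $r \in (0,\delta_h)$, picking up the factor $\delta_h$, to arrive at \eqref{e:strip}.

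**Main obstacle.** The routine analytic steps (Green's formula, Cauchy--Schwarz, co-area) are straightforward and mirror Lemma~\ref{l_est1}. The one point requiring care is the uniform $C^2$-control of the foliation near $\Gamma_h^n$: the signed distance function to $\Gamma_h^n$ is only piecewise smooth and its second derivatives blow up near the kinks of $\Gamma_h^n$. I expect the cleanest fix is to run the foliation argument on the exact strip $\Sdp{n}$ around the $C^2$ surface $\Gamma^n$ — where the signed distance is genuinely $C^2$ in a $\delta_h$-neighbourhood with curvature bounds uniform in $n$ by \cite{federer1959curvature} — and then transport the resulting inequality to $\Sdhp{n}$ via the map $\Phi$ of Section~\ref{sec:geomapprox}, absorbing the $O(h^q)$ distortion of volumes and gradients using \eqref{eq:estPhi}--\eqref{eq:equivvol}. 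This is also why the statement only claims "$\lesssim$" rather than sharp constants: the geometric perturbation and the co-area Jacobian both contribute $O(1)$ multiplicative factors that are harmless for the subsequent stability analysis.
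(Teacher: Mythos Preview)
Your proposal is correct and matches the paper's own proof: the paper transports the estimate to the exact strip $\Sdp{n}$ via the map $\Phi$ using \eqref{eq:equivvol}, and then simply cites Lemma~\ref{l_est1} (whose proof is exactly the foliation / co-area / Green's-formula argument you spell out in steps (i)--(iv)). Your identification of the $C^2$-regularity obstacle on $\Gamma_h^n$ and its resolution by pulling back to $\Gamma^n$ through $\Phi$ is precisely the route the paper takes.
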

\begin{proof}
  We notice that $\Phi$ in Section~\ref{sec:geomapprox} maps $\Sdhp{n}$ on  $\Sdp{n}$.
  By applying the transformation rules as in \eqref{eq:equivvol} it suffices to show
 \begin{equation} \label{e:striptransformed}
\|\ul\|_{\Sdp{n}}^2  \lesssim \delta_h (1 + \ep^{-1}) \|\ul\|_{\Odn{n}}^2 + \delta_h \ep \| \nabla \ul\|_{\Odn{n}}^2
\end{equation}
for $\ul = u \circ \Phi^{-1}$, $u \in H^1(\Odhn{n})$. This however has been shown in Lemma \ref{l_est1} (with only a different size of the extension strip).
\end{proof}
To bound the norms on $\Odhn{n}$ by corresponding norms in $\Om{n}{h}$ we finally make use of the stabilization and obtain as a direct consequence of Lemma \ref{lem:gp} and \ref{lem:strip1}:
\begin{lemma} \label{lem:strip2} Using Assumption \ref{ass:overlap}, for $u \in V_h^n$ and any $\ep > 0$ there holds
 \begin{align} \label{e:striph}
   \|u\|_{\Sdhp{n}}^2 \lesssim  \quad
   & \delta_h \ (1 + \ep^{-1})  \|u\|_{\Om{n}{h}}^2 + \delta_h \ \ep \| \nabla u\|_{\Om{n}{h}}^2
   +
     \delta_h K \ ( (1+\ep^{-1})h^2 + \ep) s_h^n(u,u). 
 \end{align}
 As a direct consequence we have for a constant $c_{L\ref{lem:strip2}}$ independent of $h$ and $\Delta t$
 \begin{align} \label{e:extendedh}
   \|u\|_{\Odhn{n}}^2 \leq
   ( 1 + & c_{L\ref{lem:strip2}a}(\ep) \, \Delta t) \|u\|_{\Om{n}{h}}^2 + c_{L\ref{lem:strip2}b}(\ep) \, \alpha \Delta t  \| \nabla u\|_{\Om{n}{h}}^2 + c_{L\ref{lem:strip2}c}(\ep,h)\,  \Delta t \, K s_h^n(u,u), \end{align}
 where
 $c_{L\ref{lem:strip2}a}(\ep) = c_{L\ref{lem:strip2}} c_{\delta_h} \winfn (1 + \ep^{-1})$,  $c_{L\ref{lem:strip2}b}(\ep) = c_{L\ref{lem:strip2}} c_{\delta_h} \winfn \ep/\alpha$ and
$c_{L\ref{lem:strip2}c}(\ep,h) = c_{L\ref{lem:strip2}} c_{\delta_h} \winfn (\ep +h^2 + h^2 \ep^{-1})$.
\end{lemma}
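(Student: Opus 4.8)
The plan is to prove \eqref{e:striph} first and then derive \eqref{e:extendedh} as an easy corollary by specializing $\ep$ and using the definition $\delta_h = c_{\delta_h}\winfn\Delta t$ from \eqref{e:delta}. For \eqref{e:striph} I would start from Lemma \ref{lem:strip1}, which already gives
\[
\|u\|_{\Sdhp{n}}^2 \lesssim \delta_h (1+\ep^{-1}) \|u\|_{\Odhn{n}}^2 + \delta_h \ep \|\nabla u\|_{\Odhn{n}}^2 .
\]
The only thing left is to replace the norms on the extended domain $\Odhn{n}$ by norms on $\Om{n}{h}$ plus a stabilization contribution. For this I would invoke Lemma \ref{lem:gp}: since $u \in V_h^n$, estimates \eqref{e:gpglobal}--\eqref{e:gpglobalgrad} give $\|u\|_{\Odhn{n}}^2 \lesssim \|u\|_{\Om{n}{h}}^2 + K h^2 s_h^n(u,u)$ and $\|\nabla u\|_{\Odhn{n}}^2 \lesssim \|\nabla u\|_{\Om{n}{h}}^2 + K s_h^n(u,u)$. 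Substituting both into the displayed inequality and collecting the terms multiplying $s_h^n(u,u)$ gives the factor $\delta_h K\big((1+\ep^{-1})h^2 + \ep\big)$, which is exactly the last term in \eqref{e:striph}. The other two terms fall out directly, so \eqref{e:striph} follows.

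For \eqref{e:extendedh} I would use the trivial splitting $\|u\|_{\Odhn{n}}^2 = \|u\|_{\Om{n}{h}}^2 + \|u\|_{\Sdhp{n}}^2$ (recall $\Odhn{n} = \Sdh{n}\cup\Om{n}{h}$, and the part of $\Sdh{n}$ interior to $\Om{n}{h}$ is already counted, so one needs only the exterior strip $\Sdhp{n}$ — this bookkeeping should be stated carefully but is routine). Then I would bound $\|u\|_{\Sdhp{n}}^2$ by \eqref{e:striph} and substitute $\delta_h = c_{\delta_h}\winfn\Delta t$; every occurrence of $\delta_h$ becomes a constant times $\Delta t$. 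Reading off the coefficients of $\|u\|_{\Om{n}{h}}^2$, $\alpha\|\nabla u\|_{\Om{n}{h}}^2$, and $\Delta t\, K\, s_h^n(u,u)$ respectively yields precisely $c_{L\ref{lem:strip2}a}(\ep)$, $c_{L\ref{lem:strip2}b}(\ep)$, and $c_{L\ref{lem:strip2}c}(\ep,h)$ as defined in the statement (with $c_{L\ref{lem:strip2}}$ the implied constant from \eqref{e:striph}); the ``$1+$'' in front of $\|u\|_{\Om{n}{h}}^2$ comes from the split-off term $\|u\|_{\Om{n}{h}}^2$ itself.

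There is essentially no substantive obstacle here: the lemma is a purely algebraic assembly of Lemma \ref{lem:strip1} (geometric strip estimate, itself reduced to Lemma \ref{l_est1} via the geometry map $\Phi$) and Lemma \ref{lem:gp} (global ghost-penalty extension control under Assumption \ref{ass:overlap}). The only points requiring a little care are (i) keeping track of which pieces of $\Sdh{n}$ lie inside versus outside $\Om{n}{h}$ so that the decomposition $\|u\|_{\Odhn{n}}^2 = \|u\|_{\Om{n}{h}}^2 + \|u\|_{\Sdhp{n}}^2$ is exact, and (ii) the scaling $|\Sdhp{n}| \simeq \delta_h$ from \eqref{e:delta} which ensures the strip factor really is proportional to $\delta_h$ and hence to $\Delta t$; both are already established earlier in the excerpt. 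If I had to name a ``hard part'' it would only be the careful tabulation of constants so that the named constants $c_{L\ref{lem:strip2}a}, c_{L\ref{lem:strip2}b}, c_{L\ref{lem:strip2}c}$ match the later uses in the stability proof — a matter of organization rather than mathematics.
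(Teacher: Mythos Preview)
Your proposal is correct and follows exactly the approach of the paper, which states the lemma ``as a direct consequence of Lemma \ref{lem:gp} and \ref{lem:strip1}'' without further proof. Your write-up is in fact more detailed than the paper's own treatment.
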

Finally, we treat consistency aspects of the stabilization:
\begin{lemma}\label{lem:est_s}
  For $s_h^n \in \{ s_h^{n,\text{dir}}, s_h^{n,\text{LPS}}, s_h^{n,\text{djmp}} \}$ as in the Sections
  \ref{s:gpdirect}, \ref{s:gplps} and \ref{s:gpdjump}, respectively, and $w \in H^{m+1}(\Odt{n}),~n=1,\dots, N$, there holds
  \begin{subequations}
  \begin{equation} \label{eq:shnw}
    s_h^n(w,w) \lesssim h^{2m} \Vert w \Vert_{H^{m+1}(\Odt{n})}^2.
  \end{equation}
  Let $\mathcal{I}$ be the Lagrange interpolation operator. Then for $w \in H^{m+1}(\Odt{n}),~n=1,\dots, N$,
  \begin{equation} \label{eq:shnIw}
    s_h^n(w - \mathcal{I} w, w - \mathcal{I} w) \lesssim h^{2m} \Vert w \Vert_{H^{m+1}(\Odt{n})}^2.
  \end{equation}
\end{subequations}
\end{lemma}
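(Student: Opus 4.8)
The bound \eqref{eq:shnIw} follows from \eqref{eq:shnw} together with standard interpolation estimates, so I would focus the work on \eqref{eq:shnw} and only briefly indicate the reduction at the end. The strategy for \eqref{eq:shnw} is to work facet-by-facet: since $s_h^n(w,w) = h^{-2}\sum_{F\in\Fh^n} s_{h,F}^n(w,w)$ and each facet patch $\omega_F$ (or facet $F$) lies in at most $\lesssim 1$ such sums by shape regularity, it suffices to prove the local estimate
\begin{equation*}
  s_{h,F}^n(w,w) \lesssim h^{2m+2}\,\Vert w \Vert_{H^{m+1}(\omega_F)}^2,
\end{equation*}
and then sum over $F\in\Fh^n$, using that the patches $\omega_F$ have finite overlap and are all contained in $\Odt{n}$. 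Summing gives $h^{-2}\cdot h^{2m+2}\Vert w\Vert_{H^{m+1}(\Odt{n})}^2 = h^{2m}\Vert w\Vert_{H^{m+1}(\Odt{n})}^2$, as claimed.

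The core of the argument is therefore the local estimate, which I would establish separately for each of the three stabilization variants, reducing two of them to the third. For the LPS version, $s_{h,F}^{n,\text{LPS}}(w,w) = h^{-2}\Vert w - \Pi_{\omega_F} w\Vert_{\omega_F}^2$, and since $\Pi_{\omega_F}$ is the $L^2$-projection onto $P_m(\omega_F)$, a Bramble--Hilbert / Deny--Lions argument on the patch $\omega_F$ of diameter $\simeq h$ gives $\Vert w - \Pi_{\omega_F} w\Vert_{\omega_F}^2 \lesssim h^{2(m+1)}\Vert w\Vert_{H^{m+1}(\omega_F)}^2$ (using that $\Pi_{\omega_F}$ reproduces polynomials of degree $m$ and a scaling argument to a reference patch). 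For the ``direct'' version, the chain of inequalities \eqref{e:lpsboundsdir} already in the proof of Lemma~\ref{lem:shf} shows $s_{h,F}^{n,\text{dir}}(w,w) \lesssim s_{h,F}^{n,\text{LPS}}(w,w)$ for functions that are polynomial on each $T_i$; for general $w\in H^{m+1}$ one applies this to the patchwise object built from $\Pi_{T_i} w$ and controls the projection error $\Vert w - \Pi_{T_i}w\Vert_{T_i}$ again by Bramble--Hilbert, so the same bound results. For the derivative-jump version, each term $\tfrac{h^{2k-1}}{k!^2}\int_F \jump{\partial_{\bn_F}^k w}^2\,ds$ is handled by a trace inequality on $F\subset\partial T_i$ followed by the standard estimate $\Vert \partial^k (w - \mathcal{I}_h w)\Vert$-type bounds: writing the jump as $\jump{\partial_{\bn_F}^k w} = \partial_{\bn_F}^k(w - p) |_{T_1} - \partial_{\bn_F}^k(w-p)|_{T_2}$ for any $p \in P_m(\omega_F)$ (the jump of a global polynomial vanishes), then choosing $p = \Pi_{\omega_F} w$ and using the trace inequality $\Vert v\Vert_F^2 \lesssim h^{-1}\Vert v\Vert_{T_i}^2 + h\Vert \nabla v\Vert_{T_i}^2$ together with inverse estimates, one gets $\tfrac{h^{2k-1}}{k!^2}\Vert \jump{\partial_{\bn_F}^k w}\Vert_F^2 \lesssim h^{2k-2}\Vert w - p\Vert_{H^k(\omega_F)}^2 \lesssim h^{2m}\Vert w\Vert_{H^{m+1}(\omega_F)}^2$; summing over $k=0,\dots,m$ changes only the constant.

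For \eqref{eq:shnIw}, note that $\mathcal{I}w$ is piecewise polynomial of degree $m$ on $\Td{n}$, and on each patch $\omega_F$ the jump of a \emph{globally} $P_m$ function across $F$ is zero, but $\mathcal{I}w$ need not be globally polynomial — however, $\mathcal{I}w \in V_h^n$ is continuous and piecewise $P_m$, so the crucial point is simply that $s_h^n$ applied to $w - \mathcal{I}w$ can be estimated by the same mechanism: apply \eqref{eq:shnw} with $w$ replaced by $w - \mathcal{I}w$ is not immediate since $w - \mathcal{I}w \notin H^{m+1}$ in the needed sense, so instead I would split $s_h^n(w - \mathcal{I}w, w - \mathcal{I}w) \lesssim s_h^n(w,w) + s_h^n(\mathcal{I}w, \mathcal{I}w)$ by the triangle inequality for the seminorm $s_h^n(\cdot,\cdot)^{1/2}$, bound the first term by \eqref{eq:shnw}, and bound the second using Lemma~\ref{lem:shf} facetwise — $s_{h,F}^n(\mathcal{I}w,\mathcal{I}w) \lesssim h^2\Vert \nabla \mathcal{I}w\Vert_{\omega_F}^2$ is too crude, so instead subtract the same polynomial $p\in P_m(\omega_F)$ (whose $s_{h,F}^n$-contribution is zero) and use $s_{h,F}^n(\mathcal{I}w,\mathcal{I}w) = s_{h,F}^n(\mathcal{I}w - p, \mathcal{I}w - p) \lesssim h^{-2}\Vert \mathcal{I}w - p\Vert_{\omega_F}^2 + \dots$, then $\Vert \mathcal{I}w - p\Vert_{\omega_F} \leq \Vert \mathcal{I}w - w\Vert_{\omega_F} + \Vert w - p\Vert_{\omega_F} \lesssim h^{m+1}\Vert w\Vert_{H^{m+1}(\omega_F)}$ by interpolation and Bramble--Hilbert. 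Summing over facets finishes the proof. \textbf{The main obstacle} is handling the three stabilization variants uniformly and cleanly; the derivative-jump version in particular requires carefully combining trace inequalities, inverse estimates, and the polynomial-invariance of the jump, and keeping track of the correct $h$-powers for each derivative order $k$.
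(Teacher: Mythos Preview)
Your overall strategy --- facet-by-facet estimates, Bramble--Hilbert for the LPS version, and reducing the direct version to LPS via \eqref{e:lpsboundsdir} --- matches the paper's proof for \eqref{eq:shnw} in the two volume-based cases. Two points where your route diverges from the paper's and becomes unnecessarily heavy:

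\textbf{The derivative-jump case in \eqref{eq:shnw}.} You work hard here, subtracting a polynomial, invoking trace and ``inverse'' estimates. But for $w\in H^{m+1}(\Odt{n})$ the normal derivatives $\partial_{\bn_F}^k w$, $k\le m$, have single-valued traces on $F$, so every jump vanishes and $s_h^{n,\text{djmp}}(w,w)=0$ outright. The paper simply notes this. (Incidentally, your mention of ``inverse estimates'' in this step is misplaced: $w-p$ is not piecewise polynomial, so inverse inequalities do not apply; only the trace inequality plus Bramble--Hilbert on $w-\Pi_{\omega_F}w$ in $H^k$ are needed. Your scaling count is right, but the argument is redundant for \eqref{eq:shnw}.)

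\textbf{The interpolation estimate \eqref{eq:shnIw}.} Your triangle-inequality splitting $s_h^n(w-\mathcal{I}w,\cdot)^{1/2}\le s_h^n(w,\cdot)^{1/2}+s_h^n(\mathcal{I}w,\cdot)^{1/2}$ followed by facet-wise subtraction of $p\in P_m(\omega_F)$ works, but is roundabout. The paper is more direct: for the direct and LPS versions, stability of the $L^2$ projections and of $\mathcal{E}^P$ give $s_h^n(e_w,e_w)\lesssim h^{-2}\|e_w\|_{\Odt{n}}^2$, and then the standard bound $\|e_w\|_{\Odt{n}}\lesssim h^{m+1}\|w\|_{H^{m+1}(\Odt{n})}$ finishes it in one line. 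For the derivative-jump version the paper applies the trace inequality directly to $e_w=w-\mathcal{I}w$ (essentially the computation you sketched for \eqref{eq:shnw}, but now applied where it is actually needed), using $\|D^k e_w\|_{T_F}\lesssim h^{m+1-k}\|w\|_{H^{m+1}(T_F)}$ for $k=0,\dots,m+1$.

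So your proof is correct, but you expend effort on the djmp case of \eqref{eq:shnw} that is unnecessary, and your detour for \eqref{eq:shnIw} can be replaced by a single crude stability bound on $s_h^n$ for the volume-based versions and a direct trace-plus-interpolation estimate for the jump version.
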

\begin{proof}
  We start with \eqref{eq:shnw}. There holds $s_h^{n,\text{djmp}}(w,w) = 0$ due to the continuity of the corresponding (higher order) derivatives, i.e. we only have to consider $s_h^n \in \{ s_h^{n,\text{dir}}, s_h^{n,\text{LPS}}\}$. We start with $s_h^{n,\text{dir}}$ and consider one facet contribution for $F \in \Fh^n$. Let $w_{i,h} = \mathcal{E}^P \Pi_{T_i} w|_{T_i},~i=1,2$, then
  \begin{align}
    s_{h,F}^{n,\text{dir}}(w,w)
    & = \norm{w_{1,h} - w_{2,h}}_{\omega_F}^2 \lesssim \sum_{i=1,2} \norm{w_{i,h} - \Pi_{\omega_F} w}_{\omega_F}^2 = \sum_{i=1,2} \sum_{j=1,2} \norm{w_{i,h} - \Pi_{\omega_F} w}_{T_j}^2 \nonumber \\
    & \stackrel{(\ast)}{\lesssim} \sum_{i=1,2} \norm{w_{i,h} - \Pi_{\omega_F} w}_{T_i}^2
      \lesssim \norm{w - \Pi_{\omega_F} w}_{\omega_F}^2 + \sum_{i=1,2} \norm{w - w_{i,h}}_{T_i}^2 \\
    & \lesssim s_{h,F}^{n,\text{LPS}}(w,w) + h^{2m+2} \sum_{i=1,2} \norm{w}_{H^{m+1}(T_i)}^2, \nonumber
  \end{align}
  where we used shape regularity in $(\ast)$ to bound the $L^2$ norm of a polynomial on $T_1$ by its $L^2$ norm on $T_2$ (and vice versa).
  Finally, a standard approximation result of the $L^2$ projection gives
  $\norm{w - \Pi_{\omega_F} w}_{\omega_F}^2 \lesssim h^{2k+2} \norm{w}_{H^{k+1}(\omega_F)}^2$. Adding over all facets and noting that we have a finite overlap of at most $d+1$ contributions per element concludes the proof.

  We turn our attention to \eqref{eq:shnIw} and start with $s_h^{n,\text{djmp}}(\cdot,\cdot)$.
Let $e_w = w - \mathcal{I} w$ and $T_F$ be an element so that $F \subset \partial T_F$. With trace inequalities we obtain
\begin{equation}
  s_h^{n,\text{djmp}}(e_w,e_w) \le \sum_{F \in \Fh^n} \sum_{k=0}^m \frac{h^{2k-1}}{k!^2}
 \left(  h^{-1} \Vert D^k e_w \Vert_{T_F}^2 + h \Vert D^{k+1} e_w \Vert_{T_F}^2 \right).
\end{equation}
The claim follows for $s_h^{n,\text{djmp}}$ from $ \Vert D^k e_w \Vert_{T_F}\lesssim h^{m+1-k} \Vert w \Vert_{H^{m+1}(T_F)},~k=0,\dots,m+1$. Now consider $s_h^n \in \{ s_h^{n,\text{dir}}, s_h^{n,\text{LPS}} \}$. With the stability of the $L^2$ projections and the polynomial extension operator $\mathcal{E}^P: P_m(T_1) \to P_m(T_2)$, one easily checks
\begin{equation}
  s_h^n(e_w,e_w) \lesssim h^{-2} \Vert e_w \Vert_{\Odt{n}}^2 \lesssim h^{-2} h^{2m+2} \Vert w \Vert_{H^{k+1}(\Odt{n})}^2.
\end{equation}
\end{proof}

\begin{remark}[Algebraic stability] \label{rem:linalg} \rm
  The stabilization bilinear form $s_h^n(\cdot,\cdot)$ is based on the active mesh rather then the concrete boundary/mesh intersection. This results in stability properties which are robust with respect to the cut positions. Furthermore, this robustness also carries over to the conditioning of linear systems. In the original paper \cite{B10} it was already shown for an elliptic model problem that the condition number can be bounded independent of the cut position. We notice that this result can also be carried over to the linear systems arising from \eqref{e:unfFEM1}.
\end{remark}

\subsection{Stability analysis} \label{s_stab}

\begin{theorem}\label{Th1}
  Under Assumption \ref{ass:overlap}, sufficiently large $c_\gamma$ in \eqref{e:cgamma}, and $\Delta t$ sufficiently small, the solution of \eqref{e:unfFEM1} satisfies the following estimate:
  \begin{equation}\label{FE_stab1}
    \|u_h^k\|^2_{\Om{k}{h}} + {\Delta t} \sum_{n=1}^{k}\left(  \alpha/2 \, \norm{\nabla u_h^n}_{\Om{n}{}}^2 + \gamma_s s_h^n(u_h^n,u_h^n) \right)
    \leq \exp(c_{T\ref{Th1}} t_k) \enorm{u_h^0}_0 .
\end{equation}
with $c_{T\ref{Th1}}$ independent of $h$, $\Delta t$ and $k=1,\dots,N$ and $\enorm{\cdot}_0$ as in \eqref{e:enorm}.
\end{theorem}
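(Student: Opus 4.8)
The proof runs parallel to the semi-discrete stability estimate (Lemma~\ref{lem:contunfFEM1}); the genuinely new points are that the explicit extension operator $\E{}$ is replaced by the implicit extension carried by $s_h^n(\cdot,\cdot)$, and that the geometric mismatch $\Om{n}{}\neq\Om{n}{h}$ must be tracked. The plan is to test \eqref{e:unfFEM1} with $v_h=2\Delta t\,u_h^n\in V_h^n$ -- this is meaningful because, by the essential condition \eqref{cond1}, $u_h^{n-1}\in V_h^{n-1}$ is defined on all of $\Om{n}{h}$. Applying the elementary identity $2(a-b)a=a^2-b^2+(a-b)^2$ to the difference quotient, discarding the nonnegative term $\norm{u_h^n-u_h^{n-1}}_{\Om{n}{h}}^2$ and using the coercivity bound \eqref{e:anhbound} gives the one-step inequality
\[
  (1-2\xi_h\Delta t)\norm{u_h^n}_{\Om{n}{h}}^2+\Delta t\,\alpha\,\norm{\nabla u_h^n}_{\Om{n}{h}}^2+2\Delta t\,\gamma_s\,s_h^n(u_h^n,u_h^n)\le\norm{u_h^{n-1}}_{\Om{n}{h}}^2 .
\]

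The core of the argument is to estimate the right-hand side by level-$(n-1)$ quantities, which is the discrete substitute for Lemma~\ref{l_est1}. Writing $\Om{n}{h}=(\Om{n}{h}\cap\Om{n-1}{h})\cup(\Om{n}{h}\setminus\Om{n-1}{h})$ and observing that the second set lies in the sharp strip $\Sdhp{n-1}$ (a consequence of $c_{\delta_h}>1$ in \eqref{e:delta} together with \eqref{cond1}), one has $\Om{n}{h}\subset\Odhn{n-1}$, hence $\norm{u_h^{n-1}}_{\Om{n}{h}}^2\le\norm{u_h^{n-1}}_{\Odhn{n-1}}^2$, and one invokes Lemma~\ref{lem:strip2}, specifically estimate \eqref{e:extendedh}: for any $\ep>0$,
\[
  \norm{u_h^{n-1}}_{\Om{n}{h}}^2\le(1+c_{L\ref{lem:strip2}a}(\ep)\Delta t)\norm{u_h^{n-1}}_{\Om{n-1}{h}}^2+c_{L\ref{lem:strip2}b}(\ep)\,\alpha\Delta t\,\norm{\nabla u_h^{n-1}}_{\Om{n-1}{h}}^2+c_{L\ref{lem:strip2}c}(\ep,h)\,\Delta t\,K\,s_h^{n-1}(u_h^{n-1},u_h^{n-1}),
\]
where $c_{L\ref{lem:strip2}b}(\ep)$ is proportional to $\ep$ and $c_{L\ref{lem:strip2}c}(\ep,h)$ is bounded for $h\le1$; these constants, and the factor $K\lesssim1+\delta_h/h$, come from Lemma~\ref{lem:gp} and Assumption~\ref{ass:overlap}.

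I would then (i) fix $\ep$ small enough that $c_{L\ref{lem:strip2}b}(\ep)\le\tfrac12$, so that upon summation the gradient term on the right is absorbed into half of the level-$(n-1)$ gradient term on the left; and (ii) with $\gamma_s=c_\gamma K$ as in \eqref{e:cgamma}, choose $c_\gamma$ large enough (independently of $h,\Delta t$) that $2\gamma_s-c_{L\ref{lem:strip2}c}(\ep,h)K\ge\gamma_s$, so that the stabilization term on the right is absorbed into the level-$(n-1)$ stabilization term on the left. Summing over $n=1,\dots,k$, using the shifts $\sum_{n=1}^k\norm{\nabla u_h^{n-1}}_{\Om{n-1}{h}}^2=\sum_{n=0}^{k-1}\norm{\nabla u_h^n}_{\Om{n}{h}}^2$ and the analogue for $s_h$, and collecting the surviving level-$0$ contributions into $\lesssim\enorm{u_h^0}_0^2$, leaves
\[
  (1-2\xi_h\Delta t)\norm{u_h^k}_{\Om{k}{h}}^2+\Delta t\sum_{n=1}^k\Big(\tfrac{\alpha}{2}\norm{\nabla u_h^n}_{\Om{n}{h}}^2+\gamma_s\,s_h^n(u_h^n,u_h^n)\Big)\lesssim\enorm{u_h^0}_0^2+\Delta t\sum_{n=1}^{k-1}\norm{u_h^n}_{\Om{n}{h}}^2 .
\]
For $\Delta t$ small enough that $2\xi_h\Delta t<\tfrac12$, the discrete Gronwall inequality yields \eqref{FE_stab1}; the norm $\norm{\nabla u_h^n}_{\Om{n}{}}$ appearing in the statement is obtained from $\norm{\nabla u_h^n}_{\Om{n}{h}}$ via the equivalences \eqref{eq:equivvol} together with $\Om{n}{}\subset\Odt{n}$ and Lemma~\ref{lem:gp}.

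The main obstacle is the balancing in steps~(i)--(ii): in contrast to the semi-discrete case, $s_h^{n-1}(u_h^{n-1},u_h^{n-1})$ must at the same time realize the discrete extension that makes $u_h^{n-1}$ usable on $\Om{n}{h}$ \emph{and} be small enough to be re-absorbed once Lemma~\ref{lem:strip2} returns it to the right-hand side multiplied by $K\lesssim1+\delta_h/h$. This is exactly what forces the scaling $\gamma_s\sim K$ and permits stability without a CFL-type restriction on $\Delta t/h$; pinning down the dependence on $K$ and $\ep$ through Lemmas~\ref{lem:gp} and~\ref{lem:strip2} is the delicate part, the remainder being a routine transcription of the semi-discrete proof.
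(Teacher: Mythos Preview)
Your proposal is correct and follows essentially the same route as the paper: test with $2\Delta t\,u_h^n$, use the coercivity bound \eqref{e:anhbound}, pass $\norm{u_h^{n-1}}_{\Om{n}{h}}^2$ to $\norm{u_h^{n-1}}_{\Odhn{n-1}}^2$ and apply Lemma~\ref{lem:strip2}, then choose $\ep$ and $c_\gamma$ exactly as you describe before summing and invoking Gronwall. Your closing remark about recovering $\norm{\nabla u_h^n}_{\Om{n}{}}$ from $\norm{\nabla u_h^n}_{\Om{n}{h}}$ is a legitimate point the paper glosses over; otherwise the arguments coincide.
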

\begin{proof}
  We test \eqref{e:unfFEM1} with $u_h^n$ and multiply by $2 \Delta t$ which yields:
  \begin{equation}
    \norm{u_h^n}_{\Om{n}{h}}^2 + \norm{u_h^n - u_h^{n-1}}_{\Om{n}{h}}^2 + 2\Delta t \, \ahn(u_h^n,u_h^n) + 2\Delta t \, \gamma_s s_h^n(u_h^n,u_h^n) = \norm{u_h^{n-1}}_{\Om{n}{h}}^2.
  \end{equation}
Using $\norm{u_h^n - u_h^{n-1}}_{\Om{n}{h}}^2 > 0$, the lower bound on $\ahn(\cdot,\cdot)$, cf. \eqref{e:anhbound}, and Lemma \ref{lem:strip2}, we get
  \begin{align}
    (1 - & 2 \xi_h \Delta t ) \norm{u_h^n}_{\Om{n}{h}}^2 + \Delta t \alpha \norm{\nabla u_h^n}_{\Om{n}{h}}^2 + 2 \gamma_s \Delta t s_h^n(u_h^n,u_h^n)
    \leq
             \norm{u_h^{n-1}}_{\Om{n}{h}}^2 \leq \norm{u_h^{n-1}}_{\Odhn{n-1}}^2      \\
    \leq
         &
           ( 1 + c_{L\ref{lem:strip2}a}(\ep) \, \Delta t ) \|u_h^{n-1}\|_{\Om{n-1}{h}}^2
           + c_{L\ref{lem:strip2}b}(\ep) \, \Delta t \alpha \| \nabla u_h^{n-1}\|_{\Om{n-1}{h}}^2
    +  c_{L\ref{lem:strip2}c}(\ep,h)  \Delta t K \ s_h^{n-1}(u_h^{n-1},u_h^{n-1}). \nonumber
  \end{align}
  We choose $\varepsilon \leq \alpha / (2 c_{L\ref{lem:strip2}} c_{\delta_h} \winfn)$ so that $c_{L\ref{lem:strip2}b}(\ep) \leq 1/2$ and $c_{L\ref{lem:strip2}a}(\ep)$ and  $c_{L\ref{lem:strip2}c}=c_{L\ref{lem:strip2}c}(\ep,h)$ are bounded independent of $h$ and $\Delta t$.
  Further, we assume $\gamma_s \geq c_{L\ref{lem:strip2}c} K $.
  Summing up  over $n=1,\dots,k,~k\leq N$ yields
  \begin{align}
    (1-& 2\xi_h \Delta t ) \norm{u_h^k}_{\Om{k}{h}}^2 + \alpha/2 \, \Delta t  \sum_{n=1}^k  \norm{\nabla u_h^n}_{\Om{n}{}}^2 + \gamma_s \Delta t \sum_{n=1}^k s_h^n(u_h^n,u_h^n)
    \nonumber \\
    & \leq \norm{u^0}_{\Om{0}{}}^2 + (c_{L\ref{lem:strip2}a} + 2 \xi_h) \Delta t \sum_{n=0}^{k-1} \| u^n \|_{\Om{n}{}}^2 +  \gamma_s \Delta t s_h^0(u_h^0,u_h^0) + \alpha/2 \, \Delta t  \norm{\nabla u_h^0}_{\Om{0}{}}^2.
  \end{align}
  Now we can apply Gronwall's Lemma with $\xi_h \Delta t \leq \frac14$ and obtain the result with
  $c_{T\ref{Th1}} = c_{L\ref{lem:strip2}a} + \xi_h$.
\end{proof}

\subsection{Consistency estimates} \label{sec:consist}

Testing \eqref{transport} with $v_h^l = v_h \circ \Phi^{-1},~v_h \in V_h^n$, where $\Phi$ as in
Section \ref{sec:geomapprox}, we see that any smooth solution to \eqref{transport} satisfies
  \begin{equation} \label{e:exid}
    \int_{\Om{n}{}} \partial_t u (t_n) v_h^l\, dx + a^n(u(t_n),v_h^l) = 0 \quad \text{ for all } v_h^l = v_h \circ \Phi^{-1},~v_h \in V_h^n.
  \end{equation}
%
For the solution $u$ to \eqref{transport}  we identify its extension $\E{}u$, cf. \eqref{e:extensiont_cont}, from $\Qs$ to $\O(\Qs)$ with $u$.

Thanks to \eqref{neigh_cond} $u^{n-1} = u(t_{n-1})$ is  well-defined on $\Om{n}{h}$ and  $u^{n} = u(t_{n})$ is  well-defined on $\Odt{n}$.
Let $\err^n = u^n - u_h^n$, subtracting \eqref{e:unfFEM1} from \eqref{e:exid} we obtain the error equation
\begin{equation} \label{e:erreq}
  \int_{\Om{n}{h}} \frac{\err^n - \err^{n-1}}{\Delta t} v_h dx + \ahn(\err^n,v_h) + \gamma_s s_h^n(\err^n,v_h) = \consist(v_h),
\end{equation}
with (again $v_h^l = v_h \circ \Phi^{-1}$) \vspace*{-0.7cm}
\begin{align*}
  \consist(v_h) :=
  & \hphantom{+} \overbrace{
    \int_{\Om{n}{}} u_t(t_n) v_h^l dx - \int_{\Om{n}{h}} \frac{u^n-u^{n-1}}{\Delta t} v_h dx
    }^{I_1}
  + \overbrace{ \vphantom{\int_{\Om{n}{}}} a^n(u^n,v_h^l) - \ahn(u^n,v_h)
    }^{I_2}
  + \overbrace{ \vphantom{\int_{\Om{n}{}}} \gamma_s s_h^n(u^n,v_h)
    }^{I_3}.
\end{align*}

\begin{lemma}\label{l_consist} Assume $u\in W^{2,\infty}(\Qs)\cap L^\infty(0,T;H^{m+1}(\Omega(t)))$,
then the consistency error has the bound
\begin{equation}\label{est:consist}
  |\consist(v_h)|\lesssim (\Delta t+h^q+h^m K^{\frac12}) \, (\norm{u}_{W^{2,\infty}(\Qs)}+\sup_{t\in[0,T]}\norm{u}_{H^{m+1}(\Omega(t))}) \,
  \enorm{v_h}_n.
\end{equation}
\end{lemma}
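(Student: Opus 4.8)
The strategy is to estimate the three pieces $I_1$, $I_2$, $I_3$ of $\consist(v_h)$ separately, in each case peeling off the geometric error between $\Om{n}{h}$ and $\Om{n}{}$ via the mapping $\Phi$ and the bounds \eqref{eq:estPhi}, then invoking the regularity of $u$ (and of its extension, using Lemma~\ref{lem:extt}) together with the interpolation/consistency properties already established. Throughout I identify $u$ with its extension $\E{}u$, which by Lemma~\ref{lem:extt} retains $W^{2,\infty}$ and $H^{m+1}$ control on $\Od{n}\supset\Odt{n}$; this is what makes $u^{n-1}$ and $u^n$ meaningful on the discrete domains in the first place.

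\textbf{Term $I_3$ (stabilization consistency).} This is the cleanest. By the Cauchy--Schwarz inequality for the symmetric positive semidefinite form $s_h^n$, $|s_h^n(u^n,v_h)|\le s_h^n(u^n,u^n)^{1/2} s_h^n(v_h,v_h)^{1/2}$. Lemma~\ref{lem:est_s}, estimate \eqref{eq:shnw}, gives $s_h^n(u^n,u^n)\lesssim h^{2m}\|u^n\|_{H^{m+1}(\Odt{n})}^2$, and $\|u^n\|_{H^{m+1}(\Odt{n})}\lesssim \sup_t\|u\|_{H^{m+1}(\Omega(t))}$ by Lemma~\ref{lem:extt}. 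Since $\gamma_s\simeq K$, we get $|I_3| = |\gamma_s s_h^n(u^n,v_h)| \lesssim K\cdot h^m\sup_t\|u\|_{H^{m+1}(\Omega(t))}\cdot(\gamma_s s_h^n(v_h,v_h))^{1/2}/\gamma_s^{1/2} = h^m K^{1/2}\sup_t\|u\|_{H^{m+1}}\,\enorm{v_h}_n$, which is exactly the $h^m K^{1/2}$ contribution. (One must absorb $\gamma_s^{1/2}=c_\gamma^{1/2}K^{1/2}$ correctly; since $(\gamma_s s_h^n(v_h,v_h))^{1/2}\le\enorm{v_h}_n$, the factor $\gamma_s/\gamma_s^{1/2}=\gamma_s^{1/2}\simeq K^{1/2}$ is what remains.)

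\textbf{Term $I_2$ (spatial form, geometric error).} Here I compare $a^n(u^n,v_h^l)$, an integral over $\Om{n}{}$ and $\Gamma^n$, with $\ahn(u^n,v_h)$, an integral over $\Om{n}{h}$ and $\Gamma_h^n$, where $v_h^l=v_h\circ\Phi^{-1}$. Pulling the $\Om{n}{}$-integral back to $\Om{n}{h}$ via $\Phi$ turns each integrand into its pushforward times $\det(D\Phi)$, and the chain rule converts $\nabla(u^n\circ\Phi^{-1})$ into $(D\Phi)^{-T}(\nabla u^n)\circ\Phi$ etc.; the difference between these transformed integrands and the untransformed ones on $\Om{n}{h}$ is controlled by $\|D\Phi-I\|_{L^\infty}\lesssim h^q$ (and similarly $\|\det D\Phi-1\|\lesssim h^q$, $\|\Phi-\mathrm{id}\|_{L^\infty}\lesssim h^{q+1}$, and a boundary analog for the $\Gamma$-terms, using also that $\bw^e$ approximates $\bw\circ\Phi$ up to $O(h^{q+1})$). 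All remaining factors are bounded by $\|u^n\|_{H^1(\Odt{n})}$ or $\|u^n\|_{L^\infty}$, hence by $\|u\|_{W^{2,\infty}(\Qs)}+\sup_t\|u\|_{H^{m+1}}$ via Lemma~\ref{lem:extt}, and the test-function factors assemble into $\enorm{v_h}_n$ (using $\|v_h\|_{H^1(\Om{n}{h})}\lesssim\enorm{v_h}_n$ and the multiplicative trace inequality for the $\Gamma_h^n$ term). This yields the $h^q$ contribution.

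\textbf{Term $I_1$ (time derivative and geometric error).} This combines a finite-difference consistency error with a geometric one: $\int_{\Om{n}{}}u_t(t_n)v_h^l - \int_{\Om{n}{h}}\frac{u^n-u^{n-1}}{\Delta t}v_h$. I split it as $\big(\int_{\Om{n}{}}u_t(t_n)v_h^l - \int_{\Om{n}{h}}u_t(t_n)v_h\big) + \int_{\Om{n}{h}}\big(u_t(t_n)-\frac{u^n-u^{n-1}}{\Delta t}\big)v_h$. The first bracket is a geometric error, handled as in $I_2$ by transformation and $\|D\Phi-I\|\lesssim h^q$, bounded by $h^q\|u_t(t_n)\|_{L^2(\Odt{n})}\|v_h\|_{L^2(\Om{n}{h})}\lesssim h^q\|u\|_{W^{1,\infty}}\enorm{v_h}_n$ — though one must be slightly careful that $u_t(t_n)$ at a fixed time is evaluated on the fattened domain $\Odt{n}$ and use $\|(\E{}u)_t\|\lesssim\|u\|_{H^{m+1}}+\|u_t\|_{H^m}$ from \eqref{u_bound_b}, or more simply the $W^{2,\infty}$ bound \eqref{u_bound_a3}. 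The second bracket is the classical implicit-Euler truncation error: by Taylor's theorem with integral remainder, $u_t(t_n)-\frac{u(t_n)-u(t_{n-1})}{\Delta t}=\frac{1}{\Delta t}\int_{t_{n-1}}^{t_n}(s-t_{n-1})u_{tt}(s)\,ds$ pointwise along material trajectories, but since we work at fixed spatial points one uses instead that on $\Om{n}{h}\subset\O(\Om{}{}(t))$ for $t\in I_n$ the extension $\E{}u$ is $W^{2,\infty}$ in space-time (Lemma~\ref{lem:extt}, \eqref{u_bound_a3}), so the remainder is $\lesssim\Delta t\,\|u\|_{W^{2,\infty}(\Qs)}$ in $L^\infty$, giving $\lesssim\Delta t\,\|u\|_{W^{2,\infty}}\|v_h\|_{L^2(\Om{n}{h})}\lesssim\Delta t\,\|u\|_{W^{2,\infty}}\enorm{v_h}_n$. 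This is the $\Delta t$ contribution.

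\textbf{Main obstacle.} The technically delicate point is $I_1$: one must make sure that the finite-difference quotient $\frac{u^n-u^{n-1}}{\Delta t}$ — where $u^{n-1}=\E{}u(t_{n-1})$ is the \emph{extension} evaluated at the \emph{previous} time but restricted to the \emph{current} discrete domain $\Om{n}{h}$ — is compared against $u_t(t_n)$ correctly. This requires that the extension $\E{}u$ be smooth jointly in $t$ on the space-time neighborhood $\O(\Qs)$, so that the difference quotient really is a second-order-in-$\Delta t$ perturbation of $\partial_t\E{}u$; this is precisely what \eqref{u_bound_a3} and the commutation Lemma~\ref{L_exch} are for, and one must also keep track that the inclusion \eqref{neigh_cond} guarantees $\Om{n}{h}$ lies inside the region where the extension is defined and controlled. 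Assembling the three factors $\Delta t + h^q + h^m K^{1/2}$ with the generic constant is then routine, using $\enorm{v_h}_n$ to absorb all test-function norms (including the $\Gamma_h^n$-trace via the multiplicative trace inequality and $\norm{\nabla v_h}_{\Om{n}{h}}\le\sqrt{2/\alpha}\,\enorm{v_h}_n$, and the stabilization seminorm for $I_3$).
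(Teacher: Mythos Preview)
Your proposal is correct and follows essentially the same route as the paper's proof: the same three-term split, Taylor expansion with integral remainder for the time-consistency part of $I_1$, a change-of-variables argument via $\Phi$ and the bounds \eqref{eq:estPhi} for the geometric parts of $I_1$ and $I_2$, and Cauchy--Schwarz plus Lemma~\ref{lem:est_s} for $I_3$ (with the $\gamma_s^{1/2}\simeq K^{1/2}$ bookkeeping you spell out). The only minor wobble is the intermediate claim that the geometric piece of $I_1$ is bounded by $h^q\|u_t(t_n)\|_{L^2(\Odt{n})}\|v_h\|_{L^2(\Om{n}{h})}$: the term $(u_t\circ\Phi-u_t)$ requires a Lipschitz bound on $u_t$, not merely $L^2$ control, but you immediately correct this by invoking the $W^{2,\infty}$ estimate \eqref{u_bound_a3}, exactly as the paper does.
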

\begin{proof} We treat $\consist(v_h)$ term by term, starting with $I_1$:
\[
I_1=  \int_{\Om{n}{h}}\int_{t_{n-1}}^{t_n}\frac{t-t_{n-1}}{\Delta t} u_{tt} \,dt\,v_h\, dx-\int_{\Om{n}{h}}u_t(t_n) v_h\, dx+\int_{\Om{n}{}}u_t(t_n) v_h^l\, dx.
\]
We have with $\Om{n}{h} \in \O(\Om{}{}(t)),~t\in I_n$ and \eqref{u_bound_a3}
\begin{equation*}
\left| \int_{\Om{n}{h}}\int_{t_{n-1}}^{t_n}u_{tt} \frac{t-t_{n-1}}{\Delta t}\,dt\,v_h\, ds\right| \le \frac12 \Delta t\|u_{tt}\|_{L^\infty(\O(\Qsn))}\|v_h\|_{L^1(\Om{n}{h})} \lesssim \Delta t\|u\|_{W^{2,\infty}(\Qs)}\|v_h\|_{\Om{n}{h}},
\end{equation*}
and
\begin{multline*}
  \left|\int_{\Om{n}{h}}u_t(t_n) v_h\, dx-\int_{\Om{n}{}}u_t(t_n) v_h^l\, dx\right|
  = \left|\int_{\Om{n}{h}}u_t(t_n)-(u_t \circ \Phi)(t_n) (1-\det(D\Phi)) v_h\, dx\right|\\
   \lesssim h^{q}(\|\nabla u_t\|_{L^\infty(\O(\Om{n}{}))}\|v_h\|_{\Om{n}{h}} \lesssim h^{q}\| u\|_{W^{2,\infty}(\Qs)}\|v_h\|_{\Om{n}{h}},
\end{multline*}
where we used the change of variables, the second bound in \eqref{eq:estPhi} and
\[|u_t(x,t_n)-(u_t \circ \Phi)(x,t_n)|\le \|\nabla u_t\|_{L^\infty(\O(\Om{n}{}))} |x-\Phi(x)|\]
with the first bound in \eqref{eq:estPhi}.
The bound for $I_2$ analogously follows from the differentiation chain rule and \eqref{eq:estPhi}, see, e.g., \cite[Lemma 7.4]{gross2015trace},
\begin{equation*}
  |I_2|  \lesssim h^{q}\|u\|_{W^{2,\infty}(\Qs)}\|v_h\|_{H^1(\Om{n}{h})}.
\end{equation*}
For the third term, $I_3$, we first use the Cauchy--Schwarz inequality and further the result of Lemma~\ref{lem:est_s},
\begin{equation*}
s_h^n(u^n,v_h)\le s_h^n(u^n,u^n)^\frac12 s_h^n(v_h,v_h)^\frac12 \lesssim h^{m}\|u\|_{H^{m+1}(\Odt{n})}s_h^n(v_h,v_h)^\frac12
\lesssim h^{m}\|u\|_{H^{m+1}(\Om{n}{})}s_h^n(v_h,v_h)^\frac12.
\end{equation*}
In the last bound we used $\Odt{n}\subset\O(\Om{n}{})$ and \eqref{u_bound_a1}.
\end{proof}

We notice that the latter part in the consistency error, $h^m K^{\frac12}$, vanishes for the derivative jump formulation as $s_h^n(u,v_h) = 0$ for all $u \in H^{m+1}(\Odt{n})$, $v_h \in V_h^n$.

\subsection{Error estimate in the energy norm} \label{sec:aprioriest}
 We let $u = \mathcal{I} u^n \in V_h^n$ be the Lagrange interpolant  for $u^n$ in $\Odt{n}\subset\O(\Om{n}{})$; we assume $u^n$  sufficiently smooth so that the interpolation is well-defined. Following standard lines of argument, we split $\err^n$ into finite element and approximation parts,
\[
\err^n=\underset{\mbox{$e^n$}}{\underbrace{(u^n-u^n_I)}}+\underset{\mbox{$e^n_h \in V_h^n$}}{\underbrace{(u^n_I-u^n_h)}}.
\]
Equation \eqref{e:erreq} yields
\begin{equation}\label{e:err1}
\int_{\Om{n}{h}}\left(\frac{e^n_h-e^{n-1}_h}{\Delta t} \right) v_h\,ds+ a^h_n(e_h^n,v_h)+\gamma_s s_h^n(u_h^n,v_h) = \interpol(v_h)+\consist(v_h),\quad \forall~v_h\in V^n_h,
\end{equation}
with the interpolation term
\[
\interpol(v_h)=-\int_{\Om{n}{h}}\left(\frac{e^n-e^{n-1}}{\Delta t} \right) v_h\,ds_h- \ahn(e^n,v_h)-\gamma_s s_h^n(e^n,v_h).
\]
We give the estimate for the interpolation terms in the following lemma.

\begin{lemma}\label{l_interp} Assume $u\in L^\infty(0,T;H^{m+1}(\Om{}{}(t)))$ and $u_t\in L^\infty(0,T;H^{m}(\Om{}{}(t)))$,
then it holds
\begin{equation}\label{est_inter}
  |\interpol(v_h)|\lesssim h^mK^{\frac12} \,\sup_{t\in[0,T]}(\|u\|_{H^{m+1}(\Om{}{}(t))}+\|u_t\|_{H^{m}(\Om{}{}(t))}) \, \enorm{v_h}_n.
\end{equation}
\end{lemma}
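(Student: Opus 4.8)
The plan is to estimate each of the three pieces of $\interpol(v_h)$ separately, using the interpolation error bounds for $e^n = u^n - \mathcal{I}u^n$ together with the norm equivalences \eqref{eq:equivvol} relating quantities on $\Om{n}{h}$ to those on $\Om{n}{}$, and finally to absorb the stabilization and gradient contributions into $\enorm{v_h}_n$. First I would handle the diffusion--convection term $\ahn(e^n,v_h)$: by Cauchy--Schwarz on each term in \eqref{e:anh}, the multiplicative trace inequality on $\Gamma_h^n$, and the boundedness of $\we$ and $\Div(\we)$, one gets $|\ahn(e^n,v_h)| \lesssim (\|e^n\|_{H^1(\Om{n}{h})}) \, \|v_h\|_{H^1(\Om{n}{h})} \lesssim (\|e^n\|_{H^1(\Om{n}{h})}) \, \enorm{v_h}_n$; then standard Lagrange interpolation estimates on $\Odt{n} \subset \O(\Om{n}{})$, transported back via $\Phi$ using \eqref{eq:equivvol}, give $\|e^n\|_{H^1(\Om{n}{h})} \lesssim h^m \|u^n\|_{H^{m+1}(\Odt{n})} \lesssim h^m \|u\|_{H^{m+1}(\Om{n}{}(t))}$ by Lemma~\ref{lem:extt}, \eqref{u_bound_a1}. (One subtlety: the lowest-order $L^2$ part of $e^n$ is $O(h^{m+1})$, which is harmless.)

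Next I would treat the stabilization term $\gamma_s s_h^n(e^n,v_h)$. By Cauchy--Schwarz for the semi-definite form $s_h^n$ and the consistency estimate \eqref{eq:shnIw} of Lemma~\ref{lem:est_s}, $|\gamma_s s_h^n(e^n,v_h)| \le \gamma_s \, s_h^n(e^n,e^n)^{1/2} s_h^n(v_h,v_h)^{1/2} \lesssim \gamma_s \, h^m \|u^n\|_{H^{m+1}(\Odt{n})} \, s_h^n(v_h,v_h)^{1/2}$. Since $\gamma_s \simeq K$ by \eqref{e:cgamma}, we have $\gamma_s \, s_h^n(v_h,v_h)^{1/2} = \gamma_s^{1/2} (\gamma_s s_h^n(v_h,v_h))^{1/2} \lesssim K^{1/2} \enorm{v_h}_n$, producing the $h^m K^{1/2}$ factor in \eqref{est_inter}; again \eqref{u_bound_a1} converts $\|u^n\|_{H^{m+1}(\Odt{n})}$ to $\|u\|_{H^{m+1}(\Om{n}{}(t))}$.

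The remaining and most delicate term is the discrete time-difference $\int_{\Om{n}{h}} \frac{e^n - e^{n-1}}{\Delta t} v_h \, ds_h$. The key observation is that $e^n - e^{n-1} = (u^n - u^{n-1}) - (\mathcal{I}u^n - \mathcal{I}u^{n-1})$, i.e. it is (minus) the interpolation error of the increment $u^n - u^{n-1}$, and the naive bound $\|e^n - e^{n-1}\| \le \|e^n\| + \|e^{n-1}\| \lesssim h^m$ would lose a factor $\Delta t^{-1}$. Instead I would write $\frac{u^n - u^{n-1}}{\Delta t} = \frac{1}{\Delta t}\int_{t_{n-1}}^{t_n} (\E{}u)_t(t)\,dt$ using the extension $\E{}u$ (which is smooth in time on $\O(\Qs) \supset \Om{n}{h}$, so $u^{n-1}$ makes sense on $\Om{n}{h}$), and correspondingly $\frac{\mathcal{I}u^n - \mathcal{I}u^{n-1}}{\Delta t} = \frac{1}{\Delta t}\int_{t_{n-1}}^{t_n} \mathcal{I}(\E{}u)_t(t)\,dt$; commuting $\mathcal{I}$ with $\partial_t$ is legitimate since the Lagrange interpolant is a fixed linear combination of nodal values. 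Hence $\frac{e^n - e^{n-1}}{\Delta t} = -\frac{1}{\Delta t}\int_{t_{n-1}}^{t_n}\bigl((\E{}u)_t - \mathcal{I}(\E{}u)_t\bigr)(t)\,dt$, and a standard interpolation estimate bounds the integrand in $L^2(\Om{n}{h})$ by $h^m \|(\E{}u)_t(t)\|_{H^m(\O(\Om{n}{}))}$, which by \eqref{u_bound_b} of Lemma~\ref{lem:extt} is $\lesssim h^m(\|u\|_{H^{m+1}(\Om{}{}(t))} + \|u_t\|_{H^m(\Om{}{}(t))})$ uniformly in $t \in I_n$. Averaging over $I_n$ gives $\|\tfrac{e^n - e^{n-1}}{\Delta t}\|_{\Om{n}{h}} \lesssim h^m \sup_{t}(\|u\|_{H^{m+1}(\Om{}{}(t))} + \|u_t\|_{H^m(\Om{}{}(t))})$, and Cauchy--Schwarz against $\|v_h\|_{\Om{n}{h}} \le \enorm{v_h}_n$ closes this term. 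Collecting the three bounds yields \eqref{est_inter}; the main obstacle is precisely the time-difference term, where one must avoid the spurious $\Delta t^{-1}$ by exploiting the commutation of interpolation with $\partial_t$ and the temporal regularity of the extension furnished by Lemmas~\ref{L_exch} and~\ref{lem:extt}.
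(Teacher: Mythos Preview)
Your proposal is correct and follows essentially the same route as the paper: term-by-term estimation via Cauchy--Schwarz, standard Lagrange interpolation bounds on $\Odt{n}$ combined with the extension estimates of Lemma~\ref{lem:extt}, the consistency bound \eqref{eq:shnIw} for the stabilization term (with the $K^{1/2}$ factor emerging from $\gamma_s^{1/2}$), and---for the time-difference term---the key commutation of nodal interpolation with $\partial_t$ so that $\tfrac{e^n-e^{n-1}}{\Delta t}$ becomes a time-average of interpolation errors of $(\E{}u)_t$, controlled via \eqref{u_bound_b}. One minor remark: the appeal to $\Phi$ and \eqref{eq:equivvol} in your treatment of $\ahn(e^n,v_h)$ is unnecessary, since the interpolation estimate already lives on $\Odt{n}\supset\Om{n}{h}$ and one passes to $\Om{n}{}$ directly through $\Odt{n}\subset\O(\Om{n}{})$ and \eqref{u_bound_a1}; the paper does not invoke $\Phi$ here.
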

\begin{proof}
We use standard interpolation properties of polynomials to conclude
\begin{equation}\label{eq:interp}
\|e^n\|_{\Odt{n}}+h\|\nabla e^n\|_{\Odt{n}}\lesssim h^{m+1}\|u^n\|_{H^{m+1}(\O(\Om{n}{}))}\lesssim h^{m+1}\|u^n\|_{H^{m+1}(\Om{n}{})}.
\end{equation}
The last inequality is thanks to \eqref{u_bound_a1}.
On $\Om{n}{h}$ we extend $u_I^h$ for all $t\in[t_{n-1},t_n]$  as the Lagrange interpolant of $u(t)$ in all nodes from $\Odt{n}$ so that $u^n_I=u_I^{n-1}$ on $\Om{n}{h}$ for $t=t_{n-1}$. Since $(u^n_I)_t$ appears to be the nodal interpolant for $u_t$, we have with \eqref{u_bound_a1} and $\Odt{n} \subset \O({\Om{}{}(t)}),~ t\in I_n$, that
\begin{equation}\label{eq:interp2}
\|e^n_t\|_{\Om{n}{h}}\lesssim h^{m}\|u_t\|_{H^{m}(\Odt{n})} \lesssim h^{m}\|u_t\|_{H^{m}(\O({\Om{}{}(t)})} \lesssim h^m \|u_t\|_{H^{m}(\Om{}{}(t))}, \quad t \in I_n.
\end{equation}
We treat the first term in $\interpol(v_h)$ using Cauchy--Schwarz, \eqref{eq:interp2}, and \eqref{u_bound_b},
\begin{equation}\label{e:err2}
\begin{split}
\left|\int_{\Om{n}{h}}\right.&\left.\left(\frac{e^n-e^{n-1}}{\Delta t} \right) v_h\,ds_h\right|\le \left\|\frac{e^n-e^{n-1}}{\Delta t}\right\|_{\Om{n}{h}}\|v_h\|_{\Om{n}{h}} 
 =|\Delta t|^{-1} \left\|\int^{t_n}_{t_{n-1}} e_t(t')\,dt'\right\|_{\Om{n}{h}}\|v_h\|_{\Om{n}{h}}\\&
 \le |\Delta t|^{-\frac12}\left(\int^{t_n}_{t_{n-1}}\| e_t(t')\|_{\Om{n}{h}}^2\,dt'\right)^{\frac12}\|v_h\|_{\Om{n}{h}}
 \lesssim \,h^m\sup_{t\in[t_{n-1},t_n]} \| u_t\|_{H^{m}(\Odt{n})}\|v_h\|_{\Om{n}{h}}\\
    & \lesssim\,h^{m}\sup_{t\in[0,T]}(\|u\|_{H^{m+1}(\Om{}{}(t))}+\|u_t\|_{H^{m}(\Om{}{}(t))})\|v_h\|_{\Om{n}{h}}.
\end{split}
\end{equation}
We handle the term $\ahn(e^n,v_h)$ in a straight-forward way using the Cauchy-Schwarz inequality and \eqref{eq:interp}:
\begin{equation*}
  |\ahn(e^n,v_h)|
  \lesssim h^m\|u^n\|_{H^{m+1}(\Om{n}{})}(\|v_h\|_{\Om{n}{h}}+\alpha^{\frac12}\|\nabla v_h\|_{\Om{n}{h}}).
\end{equation*}
The stabilization term is treated using the Cauchy--Schwarz inequality and Lemma~\ref{lem:est_s},
\begin{equation*}
\begin{split}
  s_h^n(e^n,v_h)&\le s_h^n(e^n,e^n)^\frac12 s_h^n(v_h,v_h)^\frac12
  \lesssim h^{m}\|u\|_{H^{m+1}(\Om{n}{})}s_h^n(v_h,v_h)^\frac12.
\end{split}
\end{equation*}
We summarize the above bounds into the estimate of the interpolation term as in \eqref{est_inter}.
\end{proof}

\begin{theorem}\label{Th2}
We make Assumption \ref{ass:overlap}, and assume $c_\gamma$ in \eqref{e:cgamma} to be sufficiently large, $\Delta t$ sufficiently small,  $u$ the  solution to \eqref{transport}, $u\in W^{2,\infty}(\Qs)\cap L^\infty(0,T;H^{m+1}(\Om{}{}(t)))$ and $u_t\in L^\infty(H^{m}(0,T;\Om{}{}(t)))$, $\Psi$ to be sufficiently smooth.
  For  $u_h^n$, $n=1,\dots,N$, the finite element solution of \eqref{e:unfFEM1},
  and $\err^n = u_h^n - u^n$  the following error estimate holds:
  \begin{equation}\label{FE_est1}
    \|\err^n\|^2_{\Om{n}{h}}+\frac{\Delta t}{2} \sum_{k=1}^{n}\! \left( \frac{\alpha}{2}\|\nabla\err^k \|^2_{\Om{k}{h}}
      +\gamma_s s_h^n(\err^n,\err^n)\right)
    \lesssim \exp(c_{T\ref{Th2}}t_n) R(u) (\Delta t^2+h^{2q}+h^{2m}K ),
  \end{equation}
  with $R(u) := \sup_{t\in[0,T]}(\|u\|_{H^{m+1}(\Om{}{}(t))}^2+\|u_t\|_{H^{m}(\Om{}{}(t))}^2)+\|u\|_{W^{2,\infty}(\Qs)}^2$ and $c_{T\ref{Th2}}$ independent of $h$, $\Delta t$, $n$ and of the positions of $\Omega_h$ over the background mesh.
\end{theorem}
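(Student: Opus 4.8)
The plan is to follow the structure of the stability proof of Theorem~\ref{Th1}, applied to the finite element error component $e_h^n := \I u^n - u_h^n \in V_h^n$, and to reinstate the interpolation component $e^n := u^n - \I u^n$ only at the very end. Starting from the error equation \eqref{e:err1}, I would test with $v_h = e_h^n$ and multiply by $2\Delta t$. Using $2\int_{\Om{n}{h}}(e_h^n - e_h^{n-1})e_h^n\,dx = \|e_h^n\|_{\Om{n}{h}}^2 - \|e_h^{n-1}\|_{\Om{n}{h}}^2 + \|e_h^n - e_h^{n-1}\|_{\Om{n}{h}}^2$, discarding the last (nonnegative) term, and invoking the coercivity estimate \eqref{e:anhbound} for $\ahn(\cdot,\cdot)$, one arrives at
\[
(1 - 2\xi_h\Delta t)\|e_h^n\|_{\Om{n}{h}}^2 + \Delta t\,\alpha\|\nabla e_h^n\|_{\Om{n}{h}}^2 + 2\gamma_s\Delta t\, s_h^n(e_h^n,e_h^n) \le \|e_h^{n-1}\|_{\Om{n}{h}}^2 + 2\Delta t\,\big(|\consist(e_h^n)| + |\interpol(e_h^n)|\big).
\]
Here $e_h^{n-1}$ is well defined on $\Om{n}{h}$ because of \eqref{cond1} (and, for the next step, because $\Om{n}{h}\subset\Odhn{n-1}$ for $c_{\delta_h}$ large enough).

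The ``lagged'' term $\|e_h^{n-1}\|_{\Om{n}{h}}^2 \le \|e_h^{n-1}\|_{\Odhn{n-1}}^2$ is then processed exactly as in Theorem~\ref{Th1} through Lemma~\ref{lem:strip2}, eq.~\eqref{e:extendedh}, which yields a factor $(1 + c\Delta t)$ on $\|e_h^{n-1}\|_{\Om{n-1}{h}}^2$ together with $O(\Delta t)$-weighted contributions of $\|\nabla e_h^{n-1}\|_{\Om{n-1}{h}}^2$ and of $K\,s_h^{n-1}(e_h^{n-1},e_h^{n-1})$. I would fix $\ep$ in those constants small so that $c_{L\ref{lem:strip2}b}(\ep)\le \frac14$, and $c_\gamma$ in \eqref{e:cgamma} large so that $c_\gamma\ge c_{L\ref{lem:strip2}c}$; this makes the level-$(n-1)$ gradient and stabilization terms dominated by a fraction of the corresponding coercive terms on the left of the step-$(n-1)$ inequality after summation. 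For the functionals on the right, Lemma~\ref{l_consist} and Lemma~\ref{l_interp} give $2\Delta t\,(|\consist(e_h^n)| + |\interpol(e_h^n)|) \lesssim \Delta t\,(\Delta t + h^q + h^m K^{1/2})\,R(u)^{1/2}\,\enorm{e_h^n}_n$, and Young's inequality splits this into a harmless part $\lesssim \Delta t\,(\Delta t^2 + h^{2q} + h^{2m}K)\,R(u)$ plus a term $\beta\,\Delta t\,\enorm{e_h^n}_n^2$ with $\beta$ small enough to be absorbed into $\Delta t\,\alpha\|\nabla e_h^n\|_{\Om{n}{h}}^2 + 2\gamma_s\Delta t\, s_h^n(e_h^n,e_h^n) + (1-2\xi_h\Delta t)\|e_h^n\|_{\Om{n}{h}}^2$ without upsetting the balance above.

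Summing over $n = 1,\dots,k$ telescopes the $\|e_h^n\|^2$ terms and, after shifting the index in the gradient and stabilization sums, leaves an inequality of the form $(1-c\Delta t)\|e_h^k\|_{\Om{k}{h}}^2 + \frac{\alpha}{2}\Delta t\sum_{n=1}^k\|\nabla e_h^n\|_{\Om{n}{h}}^2 + \gamma_s\Delta t\sum_{n=1}^k s_h^n(e_h^n,e_h^n) \le \enorm{e_h^0}_0^2 + c\,\Delta t\sum_{n=0}^{k-1}\|e_h^n\|_{\Om{n}{h}}^2 + c\,t_k\,(\Delta t^2 + h^{2q} + h^{2m}K)\,R(u)$. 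Choosing $u_h^0 = \I u^0$, so $e_h^0 = 0$, and taking $\Delta t$ small enough that $\xi_h\Delta t\le\frac14$, the discrete Gronwall inequality gives the bound for $e_h^k$ with the exponential factor $\exp(c_{T\ref{Th2}}t_k)$. It then remains to pass from $e_h^n$ to the full error $\err^n = e^n + e_h^n$ by the triangle inequality: the interpolation estimates \eqref{eq:interp} for $\|e^n\|_{\Om{n}{h}}$ and $\|\nabla e^n\|_{\Om{n}{h}}$, Lemma~\ref{lem:est_s}, eq.~\eqref{eq:shnIw}, for $s_h^n(e^n,e^n)\lesssim h^{2m}\|u^n\|_{H^{m+1}(\Om{n}{})}^2$, together with $\Delta t\sum_{k\le n}\gamma_s\lesssim t_n K$, show that the interpolation part contributes only $O(h^{2m}K\,R(u))$, which is consistent with the claimed right-hand side.

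The main obstacle is not any single estimate — the consistency, interpolation and stabilization bounds are already packaged in Lemmas~\ref{l_consist}, \ref{l_interp} and \ref{lem:est_s} — but the bookkeeping of the lagged strip terms: the parameters $\ep$, $\beta$ and $c_\gamma$ must be chosen so that, simultaneously, the level-$(n-1)$ gradient and stabilization contributions coming out of Lemma~\ref{lem:strip2} and the level-$n$ contribution absorbed via Young's inequality are both dominated by the coercive left-hand side after telescoping, leaving exactly a discrete Gronwall structure. This is the same delicate balance already met in Theorem~\ref{Th1}, so it is familiar territory, just technically fiddly.
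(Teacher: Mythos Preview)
Your proposal is correct and follows essentially the same approach as the paper: test \eqref{e:err1} with $e_h^n$, run the stability machinery of Theorem~\ref{Th1} on the lagged term via Lemma~\ref{lem:strip2}, absorb the consistency and interpolation functionals from Lemmas~\ref{l_consist} and~\ref{l_interp} via Young, sum, apply discrete Gronwall with $e_h^0=0$, and finish with the triangle inequality using \eqref{eq:interp} and Lemma~\ref{lem:est_s}. Your parameter bookkeeping (choice of $\ep$, $\beta$, $c_\gamma$) and your observation that this is ``the same delicate balance already met in Theorem~\ref{Th1}'' match the paper's argument precisely.
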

\begin{proof}
We set $v_h=2\Delta t e^n_h$ in \eqref{e:err1}. This gives
\begin{equation*}
\|e_h^n\|^2_{\Om{n}{h}}- \|e_h^{n-1}\|^2_{\Om{n}{h}} +\|e_h^n-e_h^{n-1}\|^2_{\Om{n}{h}}+{2\Delta t} \ahn(e_h^n,e_h^n)+{2\Delta t}\gamma_s s^n_h(e_h^n,e_h^n)=2\Delta t(\interpol(e_h)+\consist(e_h)).
\end{equation*}
Repeating the arguments as in the proof of Theorem~\ref{Th1}, we get
\begin{align}\label{aux1}
    (1-& 2 \xi_h \Delta t ) \norm{e_h^k}_{\Om{k}{h}}^2 + \frac{\alpha}{2} \, \Delta t  \sum_{n=1}^k  \norm{\nabla e_h^n}_{\Om{n}{h}}^2 + \gamma_s \Delta t \sum_{n=1}^k s_h^n(u_h^n,u_h^n) \\
    & \leq \norm{e^0}_{\Om{0}{h}}^2 + (c_{L\ref{lem:strip2}a} + 2 \xi_h) \Delta t \sum_{n=0}^{k-1} \| e^n \|_{\Om{n}{h}}^2 +  \gamma_s \Delta t s_h^0(e_h^0,e_h^0) + \frac{\alpha}{2} \, \Delta t  \norm{\nabla e_h^0}_{\Om{0}{h}}^2
    + 2\Delta t  \sum_{n=1}^k(\interpol(e_h^n)+\consist(e_h^n)). \nonumber
\end{align}
To estimate the interpolation and consistency terms, we apply Young's inequality to the right-hand sides of \eqref{est:consist} and \eqref{est_inter} yielding
\begin{align*}
  2 \Delta t (\consist(e_h^n) + \interpol(e_h^n))&
  \le c\,\Delta t (\Delta t^2+h^{2q}+h^{2m}K) R(u)
                           +\frac{\Delta t}{2}
\left(\|e_h^n\|_{\Om{n}{h}}^2+\frac{\alpha}{2}\|\nabla e_h^n\|_{\Om{n}{h}}^2 +\gamma_s s_h^n(e_h^n,e_h^n) \right),
\end{align*}
with a constant $c$ independent of $h$, $\Delta t$, $n$ and of the position of the surface over the background mesh. Substituting this in \eqref{aux1} and  noting $e^0_h=0$ in $\Odt{0}$ we get
\begin{align}
    (1- 2 \xi_h \Delta t ) \norm{e_h^k}_{\Om{k}{h}}^2 & + \frac12 \left( \frac{\alpha}{2} \, \Delta t  \sum_{n=1}^k  \norm{\nabla e_h^n}_{\Om{n}{}}^2 + \gamma_s \Delta t \sum_{n=1}^k s_h^n(u_h^n,u_h^n) \right)
    \nonumber \\ \nonumber
    & \leq (c_{L\ref{lem:strip2}a} + 2 \xi_h + \frac12) \Delta t \sum_{n=0}^{k-1} \| e^n \|_{\Om{n}{}}^2 +cR(u) (\Delta t^2+h^{2q}+h^{2m}K).
  \end{align}
We apply the discrete Gronwall inequality with $\xi_h \Delta t \leq 1/4$ to get
\begin{align}
    \|e_h^k & \|^2_{\Om{k}{h}}+\frac12\sum_{n=1}^{k}{\Delta t} \left( \frac{\alpha}{2}\|\nabla e_h^n\|^2_{\Om{n}{h}}
              +\gamma_s s_h^n(e_h^n,e_h^n)\right)
              \lesssim \exp(c_{T\ref{Th2}} t_k)
                R(u)(\Delta t^2+h^{2q}+h^{2m}K ) =: \exp(c_{T\ref{Th2}} t_k) Q_{e}.
                \nonumber
\end{align}
Now the triangle inequality,  \eqref{eq:interp} and \eqref{aux1} give
\begin{equation*}
\begin{split}
    \|\err^k\|^2_{\Om{k}{h}}&+\frac12\sum_{n=1}^{k}{\Delta t} \left( \frac{\alpha}{2}\|\nabla\err^n\|^2_{\Om{n}{h}}
      +\gamma_s s_h^n(\err^n,\err^n)\right)
    \\ &
    \leq
    \exp(c_{T\ref{Th2}} t_k)Q_{e} +\|e^k\|^2_{\Om{k}{h}}+\frac12\sum_{n=1}^{k}{\Delta t} \left( \alpha\|\nabla e^n\|^2_{\Om{n}{h}}
      +\gamma_s s_h^n(e^n,e^n)\right)
    \\ &
    \lesssim \exp(c_{T\ref{Th2}} t_k)Q_{e}+ \sup_{n=1,\dots,k}\norm{u}_{H^{m+1}(\Om{n}{})} h^{2m}K.
\end{split}
\end{equation*}
  This completes the proof.
\end{proof}

\begin{remark}[Extension to BDF2] \label{rem:bdf2} \rm
To keep the analysis manageable, we restricted to the backward Euler discretization. However, the method is easily extendable to higher order time stepping methods.  For example, it is straightforward to extend the method to the second order accurate in time BDF2 scheme.  Indeed, the finite difference stencil for the time derivative is changed from $\frac{u^n-u^{n-1}}{\Delta t}$ to $\frac{3u^n-4u^{n-1}+u^{n-2}}{2\Delta t} $ in the semi-discrete method in
\eqref{e:ImEuler} and for the fully discrete method in \eqref{e:unfFEM1}.
Accordingly, the width of the neighborhood extension has to be increased so that $\Om{n}{} \subset \mathcal{O}(\Om{n-1}{}) \cap \mathcal{O}(\Om{n-2}{})$ and $\Om{n}{h} \subset \mathcal{O}(\Om{n-1}{h}) \cap \mathcal{O}(\Om{n-1}{h})$. This is done by changing  $\delta_n$ in \eqref{e:delta} to $\delta_n = 2 c_\delta \winfn \Delta t$. Further, in the proof of the coercivity in the (spatially) continuous and discrete setting we have to change the time step restrictions \eqref{eq:xi} and \eqref{eq:xih} according to the changed coefficient in the BDF formula. The Gronwall-type arguments in Section \ref{s:stab:semi-disc} and in Theorem \ref{Th1} have to be replaced with corresponding versions for the BDF scheme. To handle the time derivative terms, one can use the polarization identity (6.33) from \cite{Ern04}. Finally, the consistency analysis in Section~\ref{sec:consist} can then be improved, specifically the term $I_1$ leading to a higher order (in $\Delta t$) estimate in Lemma \ref{l_consist} and Theorem \ref{Th2}.
\end{remark}


\section{Numerical experiments}\label{s:Numerics}
In this section we present numerical experiments for the method proposed and analyzed before. First, we introduce the general setup of the experiments and define the parameters that are shared or varied between the experiments (Section \ref{s:numex:setup}) before we investigate the performance of the method for moving domain problems, cf. Section
\ref{s:numex:ex1}, \ref{s:numex:ex2} and \ref{s:numex:extopo}.
While the setups in Section \ref{s:numex:ex1} and \ref{s:numex:ex2} consider smooth domains with a smooth evolution and are supposed to validate the theoretical prediction,
in Section \ref{s:mass} we address concerns related to the conservation properties of the method.
Finally, in Section \ref{s:numex:extopo} we consider a problem with topology changes that demonstrates the usability of the method beyond the theoretical assumptions made.

\subsection{General setup} \label{s:numex:setup}
We discuss the geometry approximation that is used in all numerical experiments, discuss the discretization parameters that are varied and the quantities of interest that are measured during the numerical studies.

\subsubsection{Domain approximation with level sets}
In the numerical experiments we use a level set description of the domains $\Om{n}{}$, i.e. we assume that we are given a level set function $\phi: \wOm \rightarrow \rr$ so that $\Om{n}{} = \{ \phi(t_n) < 0 \}$. For the approximation of the domain we use an interpolation of $\phi(t_n)$, denoted by $\phi_h^n$.
Thereby, we define the approximate geometries as $\Om{n}{h} = \{\phi_h^n < 0 \}$.
Further, we use a level set function which is an approximate signed distance function so that we can use $\phi_h^n$ to make sense of the strip domains $\Sdh{n}$ which are then replaced by $\{ |\phi_h^n| \leq \delta_h \}$ defining $\Td{n}$ and $\Fh^{n}$.
In all numerical examples we consider a piecewise linear approximation of the geometry, i.e. $q=1$.

\subsubsection{Implementational details}
In all experiments we use the bilinear form
\begin{equation}
 \ahn(u_h,v_h) =
 \int_{\Om{n}{h}} \alpha \nabla u_h\cdot \nabla v_h\, dx
 + \int_{\Om{n}{h}} (\bw^e\cdot\nabla u_h) \, v_h
 + \int_{\Om{n}{h}} \Div(\bw^e) u_h v_h \, dx,
\end{equation}
which is slightly different compared to the bilinear form in \eqref{e:anh} that has been used in the analysis. Further, we introduce a right hand side source term $f(v_h) := \int_{\Omega_h^n} f v_h \, dx$ to \eqref{e:unfFEM1} that we use in some of the examples.

All implementations are done in \texttt{ngsxfem}\cite{ngsxfem}, an Add-On package for unfitted finite elements in the general purpose finite element solver \texttt{Netgen}/\texttt{NGSolve} \cite{netgen,ngsolve}.

\subsubsection{Discretization parameters}
In every of the following examples we consider \emph{unstructured} triangular, quasi-uniform meshes with an initial mesh size which we denote by $h_0$ and consider an initial time step size $\Delta t_0$. Starting from here, we apply successive uniform refinements in space and in time and denote the corresponding space and time refinement levels as $L_x = 0, \dots$ and $L_t = 0, \dots$, s.t. $h = h_0 \cdot 2^{-L_x}$ and  $\Delta t = \Delta t_0 \cdot 2^{-L_t}$.
For the stabilization we choose $s_h^n(\cdot,\cdot) = s_h^{n,\text{dir}}(\cdot,\cdot)$ and $\gamma_s = c_\gamma \tilde{K}$ where for $\tilde{K}$ we choose $\tilde{K} = \lceil \delta_h / h \rceil$ which is an estimate of the thickness of the stabilization strip in terms of elements and there holds $\tilde{K} \simeq K \simeq 1 + \delta_h/h$.
Here, $\delta_h = \winfn \Delta t$ where we exploit that we know $\winfn$ explicitly in all following examples.
For $c_\gamma$ we choose $c_\gamma = 1$ if we do not address the parameter otherwise.
To check for the influence of the stabilization, we consider different choices for $\gamma_s$ in Section \ref{sec:diffstab}.
The time steps in all the numerical experiments are chosen such that the (in these cases mild) condition \eqref{eq:xih} is fulfilled.

In all numerical examples we consider polynomial degree $m=1$.
We consider the use of the implicit Euler method primarily treated in this work, but also of a BDF2 discretization, cf. Remark \ref{rem:bdf2}.

\subsubsection{Quantities of interest}
We consider the errors in the discrete space-time norms
\begin{equation}
  \begin{split}
  \norm{u_h - u^e}_{L^2(L^2)}^2 &:= \sum_{n=1}^{N} \Delta t \norm{ u_h - u^e}_{\Om{n}{h}}^2 , \quad
  \norm{u_h - u^e}_{L^2(H^1)}^2 := \sum_{n=1}^{N} \Delta t \norm{ \nabla (u_h - u^e) }_{\Om{n}{h}}^2, \\
  \norm{u_h - u^e}_{L^\infty(L^2)} & := \max_{n=1,..,N} \norm{ u_h - u^e }_{\Om{n}{h}},
  \end{split}
\end{equation}
which we also denote as the $L^2(L^2)$, the $L^2(H^1)$ and $L^\infty(L^2)$ error, respectively. We notice that we have the a priori error estimate $\norm{u_h - u^e}_{L^2(H^1)} \lesssim (h^m + \Delta t) \cdot (1 + \frac{\Delta t}{h})^{\frac12}$ from the error analysis of the implicit Euler method. For the BDF2 scheme we expect an improved rate in time and we expect (without theoretical justification yet) also an additional order in space for the norm $\norm{u_h - u^e}_{L^2(L^2)}$.

To display the asymptotical convergence rates in space and time, we use the ``experimental order of convergence''(eoc) in space and time ( \eoc{x} / \eoc{t}) which is computed based on two errors of successive levels. Additionally, we compute the \eoc{} for combined refinements in space and time (\eoc{xt}). For the case of the $L^2(L^2)$ norm where we expect a convergence rate $(\Delta t + h^2)\cdot (1 + \frac{\Delta t}{h})^{\frac12}$ for the implicit Euler method, we also add the \eoc{} of combined refinement in space and time where to each level of refinement in space we use \emph{two} levels of refinements in time (\eoc{xtt}).

\subsection{Example 1: Traveling circle}\label{s:numex:ex1}
As a first example we consider a circle traveling with a time-dependent velocity field that is constant in space through a background mesh. The setup is taken from \cite{preussmaster}.
\subsubsection{Setup}
We fix the background domain to $\wOm = (-0.7,0.9) \times (-0.7,0.7)$ and consider the time interval $[0,T]$ with $T=0.2$.
The geometry evolution is based on the following functions,
\begin{equation*}
  \phi(\bx,t) = \Vert \bx - \rho(\bx,t) \Vert - R_0, \quad
  \rho(\bx,t) = (1/\pi \sin(2 \pi t),0)^T, \quad
  \bw(\bx,t) = \bw^e(\bx,t) = \partial_t \rho(\bx,t), \quad R_0=0.5.
\end{equation*}
We set $\alpha=1$ and the (extended) solution is given as
\begin{equation*}
  u^e(\bx,t) = \cos^2 \left( \frac{\pi}{2 R} \norm{\bx - \rho(\bx,t)}_2\right),
\end{equation*}
which fulfills the boundary conditions due to $(- \nabla u^e \cdot \bn)|_{\Gamma(t)} = 0,~t\in[0,T]$. We choose $f$ according to $u^e$.
Initial temporal and spatial resolution are chosen as $h_0=0.2$, $\Delta t=0.1$.

\begin{table}
  \begin{center}
    \footnotesize
    \begin{tabular}{lrrrrrrrrc}
\toprule
\refts    &                 0&                 1&                 2&                 3&                 4&                 5&                 6&                 7&        \eoc{t}      \\
\midrule
   0&\nbf{1.574589e-01}&\num{9.734332e-02}&\num{6.101333e-02}&\num{4.224456e-02}&\num{3.379677e-02}&\num{3.034990e-02}&\num{2.899458e-02}&\num{2.845565e-02}&        ---       \\
   1&\num{1.425875e-01}&\nbf{8.523002e-02}&\num{4.837002e-02}&\num{2.906107e-02}&\num{2.030774e-02}&\num{1.691725e-02}&\num{1.575183e-02}&\num{1.537054e-02}&\numQ{      0.889}\\
   2&\num{1.405987e-01}&\num{7.754668e-02}&\nbf{4.284082e-02}&\num{2.321583e-02}&\num{1.385749e-02}&\num{9.960733e-03}&\num{8.601672e-03}&\num{8.189170e-03}&\numQ{      0.908}\\
   3&\num{1.399389e-01}&\num{7.667822e-02}&\num{3.975617e-02}&\nbf{2.099506e-02}&\num{1.126176e-02}&\num{6.784928e-03}&\num{5.002498e-03}&\num{4.413833e-03}&\numQ{      0.892}\\
   4&\num{1.396726e-01}&\num{7.626979e-02}&\num{3.937225e-02}&\num{1.994142e-02}&\nbf{1.034021e-02}&\num{5.541579e-03}&\num{3.370015e-03}&\num{2.524764e-03}&\numQ{      0.806}\\
   5&\num{1.395826e-01}&\num{7.609561e-02}&\num{3.920565e-02}&\num{1.979783e-02}&\num{9.975743e-03}&\nbf{5.128487e-03}&\num{2.749836e-03}&\num{1.682293e-03}&\numQ{      0.586}\\
   6&\num{1.395376e-01}&\num{7.599753e-02}&\num{3.912465e-02}&\num{1.974102e-02}&\num{9.914052e-03}&\num{4.987553e-03}&\nbf{2.553543e-03}&\num{1.369962e-03}&\numQ{      0.296}\\
   7&\num{1.395230e-01}&\num{7.595378e-02}&\num{3.908600e-02}&\num{1.971663e-02}&\num{9.893222e-03}&\num{4.959584e-03}&\num{2.493576e-03}&\nbf{1.274115e-03}&\numQ{      0.105}\\
\midrule
\eoc{x}&       ---          &\numQ{      0.877}&\numQ{      0.958}&\numQ{      0.987}&\numQ{      0.995}&\numQ{      0.996}&\numQ{      0.992}&\numQ{      0.969}&\\
\midrule
\underline{\eoc{xt}}&      ---            &\numQ{      0.886}&\numQ{      0.992}&\numQ{      1.029}&\numQ{      1.022}&\numQ{      1.012}&\numQ{      1.006}&\numQ{      1.003}&\\
\bottomrule
  \end{tabular}
\end{center}
\caption{ $L^2(H^1)$ error for the implicit Euler method for Example 1.} \vspace*{-0.5cm}
\label{tab:ex1:l2h1:ie}
\end{table}

\begin{table}
  \begin{center}
    \footnotesize
    \begin{tabular}{lrrrrrrrrc}
\toprule
\refts    &                 0&                 1&                 2&                 3&                 4&                 5&                 6&                 7&        \eoc{t}      \\
\midrule
   0&\nbf{1.865501e-02}&\num{8.970484e-03}&\num{6.096687e-03}&\num{5.129064e-03}&\num{4.777110e-03}&\num{4.635451e-03}&\num{4.574100e-03}&\num{4.546177e-03}&        ---       \\
   1&\num{1.422632e-02}&\nbf{5.590127e-03}&\num{3.317888e-03}&\num{2.719423e-03}&\nit{2.523780e-03}&\num{2.447789e-03}&\num{2.415146e-03}&\num{2.399942e-03}&\numQ{      0.922}\\
   2&\num{1.329629e-02}&\num{3.960299e-03}&\nbf{1.883419e-03}&\num{1.427349e-03}&\num{1.316174e-03}&\num{1.280232e-03}&\num{1.266720e-03}&\num{1.261032e-03}&\numQ{      0.928}\\
   3&\num{1.304036e-02}&\num{3.572794e-03}&\num{1.204125e-03}&\nbf{7.586096e-04}&\num{6.756068e-04}&\nit{6.566022e-04}&\num{6.515136e-04}&\num{6.501298e-04}&\numQ{      0.956}\\
   4&\num{1.298553e-02}&\num{3.470573e-03}&\num{9.904617e-04}&\num{4.399294e-04}&\nbf{3.513298e-04}&\num{3.355212e-04}&\num{3.323378e-04}&\num{3.318267e-04}&\numQ{      0.970}\\
   5&\num{1.297840e-02}&\num{3.447442e-03}&\num{9.154310e-04}&\num{3.063069e-04}&\num{1.905389e-04}&\nbf{1.720831e-04}&\nit{1.687564e-04}&\num{1.681832e-04}&\numQ{      0.980}\\
   6&\num{1.297417e-02}&\num{3.440591e-03}&\num{8.875598e-04}&\num{2.525991e-04}&\num{1.134425e-04}&\num{8.973066e-05}&\nbf{8.562924e-05}&\num{8.487192e-05}&\numQ{      0.987}\\
   7&\num{1.297422e-02}&\num{3.439324e-03}&\num{8.763359e-04}&\num{2.314029e-04}&\num{7.885197e-05}&\num{4.869846e-05}&\num{4.373282e-05}&\nit{4.277358e-05}&\numQ{      0.989}\\
\midrule
\eoc{x}&       ---       &\numQ{      1.915}&\numQ{      1.973}&\numQ{      1.921}&\numQ{      1.553}&\numQ{      0.695}&\numQ{      0.155}&\numQ{      0.032}&\\
\midrule
\underline{\eoc{xt}}&      ---        &\numQ{      1.739}&\numQ{      1.570}&\numQ{      1.312}&\numQ{      1.111}&\numQ{      1.030}&\numQ{      1.007}&\numQ{      1.001}&\\
\midrule
\underline{\underline{\eoc{xtt}}}&     ---         &---&---&---&---&\numQ{      1.95}&\numQ{      1.96}&\numQ{      1.98}&\\
\bottomrule
  \end{tabular}
\end{center}
\caption{ $L^2(L^2)$ error for the implicit Euler method for Example 1.} \vspace*{-0.5cm}
\label{tab:ex1:l2l2:ie}
\end{table}

\subsubsection{Convergence in space and time}
In Table \ref{tab:ex1:l2h1:ie} and Table \ref{tab:ex1:l2l2:ie} we display the $L^2(H^1)$ norm and the $L^2(L^2)$ norm for $8$ different time and space levels and corresponding \eoc{}s for the implicit Euler method. We observe the convergence behavior $\norm{u_h - u^e}_{L^2(H^1)} \lesssim (h + \Delta t)$. This is better than predicted as we do not observe the influence of the anisotropy factor $K \simeq 1 + \delta_h/h$. Below, in the other experiments we also do not see a significant impact of this factor on the results.
In the $L^2(L^2)$ norm we observe the improved convergence rate
$\norm{u_h - u^e}_{L^2(L^2)} \lesssim (h^2 + \Delta t)$.
In Table \ref{tab:ex1:l2l2:bdf2} we consider the $L^2(L^2)$ norm error of a BDF2 discretization and observe
$\norm{u_h - u^e}_{L^2(L^2)} \lesssim (h^2 + \Delta t^2)$.

In Table \ref{tab:ex1:linftyl2:ie} and Table \ref{tab:ex1:linftyl2:bdf2} we also show the convergence of the implicit Euler and the BDF2 method in the $L^\infty(L^2)$ norm. We observe the same convergence rates as in the $L^2(L^2)$ norm.

\begin{table}
  \begin{center}
    \footnotesize
    \begin{tabular}{lrrrrrrrrc}
\toprule
\refts    &                 0&                 1&                 2&                 3&                 4&                 5&                 6&                 7&        \eoc{t}      \\
\midrule
   0&\nbf{2.160706e-02}&\num{1.183087e-02}&\num{7.640618e-03}&\num{5.974943e-03}&\num{5.298223e-03}&\num{5.015998e-03}&\num{4.886525e-03}&\num{4.820566e-03}&        ---       \\
   1&\num{1.612822e-02}&\nbf{6.336613e-03}&\num{3.637545e-03}&\num{2.745107e-03}&\num{2.372000e-03}&\num{2.193798e-03}&\num{2.100457e-03}&\num{2.047594e-03}&\numQ{      1.235}\\
   2&\num{1.400166e-02}&\num{4.629212e-03}&\nbf{1.711648e-03}&\num{9.146368e-04}&\num{7.022108e-04}&\num{6.365059e-04}&\num{6.131291e-04}&\num{6.037330e-04}&\numQ{      1.762}\\
   3&\num{1.357721e-02}&\num{3.767018e-03}&\num{1.150143e-03}&\nbf{3.903920e-04}&\num{2.065529e-04}&\num{1.705793e-04}&\num{1.655256e-04}&\num{1.660907e-04}&\numQ{      1.862}\\
   4&\num{1.329448e-02}&\num{3.602366e-03}&\num{9.429308e-04}&\num{2.688762e-04}&\nbf{9.129648e-05}&\num{5.205046e-05}&\num{4.607950e-05}&\num{4.593658e-05}&\numQ{      1.854}\\
   5&\num{1.314612e-02}&\num{3.521127e-03}&\num{9.027692e-04}&\num{2.270193e-04}&\num{6.348785e-05}&\nbf{2.218179e-05}&\num{1.359735e-05}&\num{1.244194e-05}&\numQ{      1.884}\\
   6&\num{1.306436e-02}&\num{3.479799e-03}&\num{8.839244e-04}&\num{2.202702e-04}&\num{5.559105e-05}&\num{1.534048e-05}&\nbf{5.486552e-06}&\num{3.515896e-06}&\numQ{      1.823}\\
   7&\num{1.301879e-02}&\num{3.459705e-03}&\num{8.751645e-04}&\num{2.176504e-04}&\num{5.429783e-05}&\num{1.373355e-05}&\num{3.763809e-06}&\nbf{1.366282e-06}&\numQ{      1.364}\\
\midrule
\eoc{x}&       ---        &\numQ{      1.912}&\numQ{      1.983}&\numQ{      2.008}&\numQ{      2.003}&\numQ{      1.983}&\numQ{      1.867}&\numQ{      1.462}&\\
\midrule
\underline{\eoc{xt}}&     ---         &\numQ{      1.770}&\numQ{      1.888}&\numQ{      2.132}&\numQ{      2.096}&\numQ{      2.041}&\numQ{      2.015}&\numQ{      2.006}&\\
\bottomrule
  \end{tabular}
\end{center}
\caption{ $L^2(L^2)$ error for the BDF2 method for Example 1.} \vspace*{-0.5cm}
\label{tab:ex1:l2l2:bdf2}
\end{table}

\begin{table}
  \begin{center}
    \footnotesize
    \begin{tabular}{lrrrrrrrrc}
\toprule
\refts    &                 0&                 1&                 2&                 3&                 4&                 5&                 6&                 7&        \eoc{t}      \\
\midrule
   0&\nbf{9.729384e-02}&\num{4.933436e-02}&\num{3.612165e-02}&\num{3.125511e-02}&\num{2.935682e-02}&\num{2.854270e-02}&\num{2.816718e-02}&\num{2.798604e-02}&        ---       \\
   1&\num{8.499111e-02}&\nbf{2.961477e-02}&\num{1.775635e-02}&\num{1.459882e-02}&\nit{1.358984e-02}&\num{1.327500e-02}&\num{1.322694e-02}&\num{1.323343e-02}&\numQ{      1.081}\\
   2&\num{8.560402e-02}&\num{2.104107e-02}&\nbf{1.066020e-02}&\num{8.046866e-03}&\num{7.508456e-03}&\num{7.409310e-03}&\num{7.406471e-03}&\num{7.422717e-03}&\numQ{      0.834}\\
   3&\num{8.906167e-02}&\num{2.069431e-02}&\num{6.847485e-03}&\nbf{4.494342e-03}&\num{4.029314e-03}&\nit{3.954099e-03}&\num{3.953388e-03}&\num{3.964781e-03}&\numQ{      0.905}\\
   4&\num{9.151676e-02}&\num{2.168527e-02}&\num{5.398656e-03}&\num{2.655553e-03}&\nbf{2.176031e-03}&\num{2.090345e-03}&\num{2.079830e-03}&\num{2.083057e-03}&\numQ{      0.929}\\
   5&\num{9.294808e-02}&\num{2.255565e-02}&\num{5.579065e-03}&\num{1.753575e-03}&\num{1.194318e-03}&\nbf{1.090903e-03}&\nit{1.073015e-03}&\num{1.071417e-03}&\numQ{      0.959}\\
   6&\num{9.369835e-02}&\num{2.306068e-02}&\num{5.774145e-03}&\num{1.447288e-03}&\num{6.912667e-04}&\num{5.724796e-04}&\nbf{5.487490e-04}&\num{5.446962e-04}&\numQ{      0.976}\\
   7&\num{9.408949e-02}&\num{2.332195e-02}&\num{5.887350e-03}&\num{1.457718e-03}&\num{4.483664e-04}&\num{3.077039e-04}&\num{2.812159e-04}&\nit{2.755613e-04}&\numQ{      0.983}\\
\midrule
\eoc{x}&       ---        &\numQ{      2.012}&\numQ{      1.986}&\numQ{      2.014}&\numQ{      1.701}&\numQ{      0.543}&\numQ{      0.130}&\numQ{      0.029}&\\
\midrule
\underline{\eoc{xt}}&        ---       &\numQ{      1.716}&\numQ{      1.474}&\numQ{      1.246}&\numQ{      1.046}&\numQ{      0.996}&\numQ{      0.991}&\numQ{      0.994}&\\
\midrule
\underline{\underline{\eoc{xtt}}}&        ---       & --- & --- & --- & --- &\numQ{      1.781}&\numQ{      1.882}&\numQ{      1.961}&\\
\bottomrule
  \end{tabular}
\end{center}
\caption{ $L^\infty(L^2)$ error for the implicit Euler method for Example 1.} \vspace*{-0.5cm}
\label{tab:ex1:linftyl2:ie}
\end{table}

\begin{table}
  \begin{center}
    \footnotesize
    \begin{tabular}{lrrrrrrrrc}
\toprule
\refts    &                 0&                 1&                 2&                 3&                 4&                 5&                 6&                 7&        \eoc{t}      \\
\midrule
   0&\nbf{1.087759e-01}&\num{6.641078e-02}&\num{4.551070e-02}&\num{3.617021e-02}&\num{3.181873e-02}&\num{2.977182e-02}&\num{2.878297e-02}&\num{2.829526e-02}&        ---       \\
   1&\num{9.371222e-02}&\nbf{3.436691e-02}&\num{2.153336e-02}&\num{1.660362e-02}&\num{1.460556e-02}&\num{1.373658e-02}&\num{1.333372e-02}&\num{1.313881e-02}&\numQ{      1.107}\\
   2&\num{9.555405e-02}&\num{2.575611e-02}&\nbf{9.600374e-03}&\num{6.638909e-03}&\num{5.679017e-03}&\num{5.339504e-03}&\num{5.205055e-03}&\num{5.145946e-03}&\numQ{      1.352}\\
   3&\num{9.513787e-02}&\num{2.398256e-02}&\num{6.434050e-03}&\nbf{2.519013e-03}&\num{1.972939e-03}&\num{1.836289e-03}&\num{1.793526e-03}&\num{1.777901e-03}&\numQ{      1.533}\\
   4&\num{9.489700e-02}&\num{2.371567e-02}&\num{6.115137e-03}&\num{1.612267e-03}&\nbf{7.163662e-04}&\num{6.216198e-04}&\num{6.017498e-04}&\num{5.978225e-04}&\numQ{      1.572}\\
   5&\num{9.477414e-02}&\num{2.360406e-02}&\num{6.061370e-03}&\num{1.530559e-03}&\num{4.011432e-04}&\nbf{2.026121e-04}&\num{1.895190e-04}&\num{1.870506e-04}&\numQ{      1.676}\\
   6&\num{9.467728e-02}&\num{2.361989e-02}&\num{6.045541e-03}&\num{1.519677e-03}&\num{3.820443e-04}&\num{9.962677e-05}&\nbf{5.654871e-05}&\num{5.422225e-05}&\numQ{      1.786}\\
   7&\num{9.456274e-02}&\num{2.360911e-02}&\num{6.037779e-03}&\num{1.515790e-03}&\num{3.793685e-04}&\num{9.523443e-05}&\num{2.479995e-05}&\nbf{1.517557e-05}&\numQ{      1.837}\\
\midrule
\eoc{x}:&     ---          &\numQ{      2.002}&\numQ{      1.967}&\numQ{      1.994}&\numQ{      1.998}&\numQ{      1.994}&\numQ{      1.941}&\numQ{      0.709}&\\
\midrule
\underline{\eoc{xt}}&     ---          &\numQ{      1.662}&\numQ{      1.840}&\numQ{      1.930}&\numQ{      1.814}&\numQ{      1.822}&\numQ{      1.841}&\numQ{      1.898}&\\
\bottomrule
  \end{tabular}
\end{center}
\caption{ $L^\infty(L^2)$ error for the BDF2 method for Example 1.} \vspace*{-0.5cm}
\label{tab:ex1:linftyl2:bdf2}
\end{table}

\subsubsection{Influence of the stabilization parameter} \label{sec:diffstab}
Next, we are interested in the sensitivity of the error on the choice of the stabilization scaling $\gamma_s$. To this end, we fix $L_t=3$ and we vary $L_x \in \{0,\dots,7\}$ and $c_\gamma \in \{ 1/100, 1 , 100\}$. Further, we consider two cases: First, we use the scaling of $\gamma_s$ with $\tilde{K}$, the thickness of the extension strip and secondly, a constant scaling of $\gamma_s$, i.e.~$\gamma_s = c_\gamma$.
\begin{table}
  \begin{center}
    \footnotesize
    \begin{tabular}{lrrrrrrrrc}
\toprule
 $L_x$    &                 0&                 1&                 2&                 3&                 4&                 5&                 6&                 7 \\
\midrule
      $\tilde{K}$ & 1 & 1 & 1 & 2 & 3 & 5 & 9 & 17 \\
\midrule
      $\gamma_s = 0.01 \cdot \tilde{K}$
      &\num{1.208728e-02}&\num{3.313700e-03}&\num{1.155118e-03}&\num{7.320646e-04}&\num{6.663782e-04}&\num{6.537001e-04}&\num{6.508210e-04}&\num{6.501663e-04}\\
      $\gamma_s = \tilde{K}$
      &\num{1.304036e-02}&\num{3.572794e-03}&\num{1.204125e-03}&\num{7.586096e-04}&\num{6.756068e-04}&\num{6.566022e-04}&\num{6.515136e-04}&\num{6.501298e-04}\\
      $\gamma_s = 100 \cdot \tilde{K}$
      &\num{3.450388e-02}&\num{8.428707e-03}&\num{2.169383e-03}&\num{9.899897e-04}&\num{7.600030e-04}&\num{7.035589e-04}&\num{6.834249e-04}&\num{6.735384e-04}\\
      $\gamma_s = 0.01$
      &\num{1.208728e-02}&\num{3.313700e-03}&\num{1.155118e-03}&\num{7.322809e-04}&\num{6.664398e-04}&\num{6.537602e-04}&\num{6.508758e-04}&\num{6.502156e-04}\\
      $\gamma_s = 1$
      &\num{1.304036e-02}&\num{3.572794e-03}&\num{1.204125e-03}&\num{7.425655e-04}&\num{6.685492e-04}&\num{6.538806e-04}&\num{6.507143e-04}&\num{6.500822e-04}\\
      $\gamma_s = 100$
      &\num{3.450388e-02}&\num{8.428707e-03}&\num{2.169383e-03}&\num{9.694448e-04}&\num{7.532647e-04}&\num{6.914811e-04}&\num{6.646766e-04}&\num{6.531096e-04}\\
      \bottomrule
  \end{tabular}
\end{center}
\caption{ $L^2(L^2)$ error for implicit Euler method in Example 1 for different stabilization scalings $\gamma_s$, $L_x=0,\dots,7$, $L_t=3$.} \vspace*{-0.5cm}
\label{tab:ex1:l2l2:bdf2:stab}
\end{table}

In Table \ref{tab:ex1:l2l2:bdf2:stab} the results for a fixed time resolution are shown. We observe that there is only a very mild dependency of the numerical results on the choice of the stabilization parameter. Further, we observe that the anisotropy scaling with $\tilde{K}$ seems not to have a significant effect.

\subsection{Example 2: Growing / shrinking circle}\label{s:numex:ex2}
In this example we consider growing and shrinking circles which are described below in similar setups.
\subsubsection{Setups}
We fix the background domain to be $\wOm = (-1.25,1.25) \times (-1.25,1.25)$ and fix the time interval to $[0,T],~T=\operatorname{ln}(2)$.
The geometry evolution for the growing circle is based on the following functions:
\begin{equation*}
  \phi(\bx,t) = \Vert \bx \Vert - R(t), \quad
  R(t) = R_0 e^t,~R_0 = \frac12, \quad
  \bw(\bx,t) = \bx.
\end{equation*}
For the shrinking sphere we take accordingly
\begin{equation*}
  \phi(\bx,t) = \Vert \bx \Vert - R(t), \quad
  R(t) = R_0 e^{-t},~R_0 = 1, \quad
  \bw(\bx,t) = -\bx.
\end{equation*}
This corresponds to a circle growing from radius $\frac12$ to radius $1$ for the one case and a circle shrinking from radius $1$ to radius $\frac12$ for the other.
We obtain the constants
$  \winfn = 1$,
$\Div(\bw) = \pm 2$.
We choose the diffusivity $\alpha = 0.2$ and the right hand side $f$ so that the manufactured solution is (in both cases)
\begin{equation*}
  u^e(\bx,t) = \cos( \pi r / R(t) ),
\end{equation*}
which fulfills the boundary conditions. As initial resolution we choose $h=0.4$, $\Delta t=0.5$.

\begin{table}
  \begin{center}
    \footnotesize
    \begin{tabular}{lrrrrrrrrc}
\toprule
\refts    &                 0&                 1&                 2&                 3&                 4&                 5&                 6&                 7&       \eoc{t}      \\
\midrule
   0&\nbf{3.006714e-01}&\num{2.193849e-01}&\num{1.626635e-01}&\num{1.360271e-01}&\num{1.195835e-01}&\num{1.088052e-01}&\num{1.011408e-01}&\num{9.560677e-02}&        ---       \\
   1&\num{2.462599e-01}&\nbf{1.342448e-01}&\num{8.018081e-02}&\num{4.945041e-02}&\num{3.492584e-02}&\num{2.630218e-02}&\num{2.132810e-02}&\num{1.847133e-02}&\numQ{      2.372}\\
   2&\num{2.458792e-01}&\num{8.862904e-02}&\nbf{4.322672e-02}&\num{2.386403e-02}&\num{1.364277e-02}&\num{9.065540e-03}&\num{6.678332e-03}&\num{5.615297e-03}&\numQ{      1.718}\\
   3&\num{2.349839e-01}&\num{7.777465e-02}&\num{2.447670e-02}&\nbf{1.090908e-02}&\num{5.903193e-03}&\num{3.549645e-03}&\num{2.589624e-03}&\num{2.142131e-03}&\numQ{      1.390}\\
   4&\num{2.262955e-01}&\num{7.009652e-02}&\num{1.970686e-02}&\num{5.690150e-03}&\nbf{2.379511e-03}&\num{1.286580e-03}&\num{8.250290e-04}&\num{6.532190e-04}&\numQ{      1.713}\\
   5&\num{2.213395e-01}&\num{6.555979e-02}&\num{1.706490e-02}&\num{4.434785e-03}&\num{1.239209e-03}&\nbf{4.966944e-04}&\num{2.719942e-04}&\num{1.881381e-04}&\numQ{      1.796}\\
   6&\num{2.184885e-01}&\num{6.330986e-02}&\num{1.594460e-02}&\num{3.909461e-03}&\num{9.893903e-04}&\num{2.717112e-04}&\nbf{1.049087e-04}&\num{5.878464e-05}&\numQ{      1.678}\\
   7&\num{2.168136e-01}&\num{6.221968e-02}&\num{1.542897e-02}&\num{3.690839e-03}&\num{8.971931e-04}&\num{2.258335e-04}&\num{6.167212e-05}&\nbf{2.307751e-05}&\numQ{      1.349}\\
\midrule
\eoc{x}&        ---      &\numQ{      1.801}&\numQ{      2.012}&\numQ{      2.064}&\numQ{      2.040}&\numQ{      1.990}&\numQ{      1.873}&\numQ{      1.418}&\\
\midrule
\eoc{xt}&        ---       &\numQ{      1.163}&\numQ{      1.635}&\numQ{      1.986}&\numQ{      2.197}&\numQ{      2.260}&\numQ{      2.243}&\numQ{      2.185}&\\
\bottomrule
  \end{tabular}
\end{center}
\caption{ $L^2(L^2)$ error for the BDF2 method for the growing circle in Example 2.} \vspace*{-0.5cm}
\label{tab:ex2a:l2l2:bdf2}
\end{table}

\begin{table}
  \begin{center}
    \footnotesize
    \begin{tabular}{lrrrrrrrrc}
\toprule
\refts   &                 0&                 1&                 2&                 3&                 4&                 5&                 6&                 7&        \eoc{t}      \\
\midrule
   0&\nbf{1.283044e+00}&\num{8.164588e-01}&\num{5.031516e-01}&\num{3.832734e-01}&\num{3.508497e-01}&\num{3.589056e-01}&\num{3.781116e-01}&\num{3.950260e-01}&        ---       \\
   1&\num{3.124208e-01}&\nbf{1.667540e-01}&\num{1.032282e-01}&\num{7.754932e-02}&\num{7.849631e-02}&\num{8.681905e-02}&\num{9.482725e-02}&\num{1.005935e-01}&\numQ{      1.973}\\
   2&\num{2.271355e-01}&\num{8.835345e-02}&\nbf{4.475136e-02}&\num{2.817294e-02}&\num{2.450832e-02}&\num{2.598077e-02}&\num{2.816251e-02}&\num{2.992091e-02}&\numQ{      1.749}\\
   3&\num{2.070014e-01}&\num{7.234825e-02}&\num{2.341306e-02}&\nbf{1.105665e-02}&\num{7.584713e-03}&\num{7.273642e-03}&\num{7.761841e-03}&\num{8.303891e-03}&\numQ{      1.849}\\
   4&\num{1.962127e-01}&\num{6.389730e-02}&\num{1.825899e-02}&\num{5.443160e-03}&\nbf{2.591802e-03}&\num{1.962660e-03}&\num{1.987877e-03}&\num{2.127564e-03}&\numQ{      1.965}\\
   5&\num{1.906990e-01}&\num{5.952491e-02}&\num{1.581510e-02}&\num{4.149547e-03}&\num{1.221848e-03}&\nbf{6.062333e-04}&\num{4.972845e-04}&\num{5.191233e-04}&\numQ{      2.035}\\
   6&\num{1.875775e-01}&\num{5.744472e-02}&\num{1.478114e-02}&\num{3.652240e-03}&\num{9.327261e-04}&\num{2.778329e-04}&\nbf{1.440025e-04}&\num{1.248721e-04}&\numQ{      2.056}\\
   7&\num{1.858838e-01}&\num{5.640264e-02}&\num{1.429550e-02}&\num{3.448303e-03}&\num{8.405487e-04}&\num{2.144175e-04}&\num{6.508746e-05}&\nbf{3.487051e-05}&\numQ{      1.840}\\
\midrule
\eoc{x}&       ---        &\numQ{      1.721}&\numQ{      1.980}&\numQ{      2.052}&\numQ{      2.036}&\numQ{      1.971}&\numQ{      1.720}&\numQ{      0.900}&\\
\midrule
\eoc{xt}&      ---         &\numQ{      2.944}&\numQ{      1.898}&\numQ{      2.017}&\numQ{      2.093}&\numQ{      2.096}&\numQ{      2.074}&\numQ{      2.046}&\\
\bottomrule
  \end{tabular}
\end{center}
\caption{ $L^2(L^2)$ error for the BDF2 method for the shrinking circle in Example 2.} \vspace*{-0.5cm}
\label{tab:ex2b:l2l2:bdf2}
\end{table}

\subsubsection{Convergence in space and time}
In the Tables \ref{tab:ex2a:l2l2:bdf2} and \ref{tab:ex2b:l2l2:bdf2} the $L^2(L^2)$ errors for the examples are shown using the BDF2 method. We again observe an error behavior of the form $h^2 + \Delta t^2$. The results are in agreement with the previous observations.

\subsection{Example 3: Mass conservation}\label{s:mass}
In Section \ref{s:form} we give a conservation property that is fulfilled by the exact solution to the problem. However, due to the fact that we rely on a discrete extension, we do not preserve this property on the discrete level. In this final example we want to investigate the mass loss for the geometrical setup of the first example. We set $u^0 = \sin(\pi \Vert \bx - \rho(\bx,t)\Vert_2)$ and $\alpha = 0.1$.
This time we consider $f = 0$ so that the total mass of the exact solution, i.e. $U = U^k = \int_{\Om{k}{}} u^k \, dx, k=1,..,N$, is constant over time. Analogously we define the discrete mass
$U_h^k = \int_{\Om{k}{h}} u_h^k \, dx, k=1,..,N$.
In Table \ref{tab:ex3:mass:bdf2} we display the maximum deviation from the inital mass $E_h^{\text{mass}} = \max_{k=1,..,N} | U_h^k - U_h^0 |$.
\begin{table}
  \begin{center}
    \footnotesize
    \begin{tabular}{lrrrrrrrrc}
\toprule
    &                 0&                 1&                 2&                 3&                 4&                 5&                 6&                 7&        eoc.      \\
\midrule
   0&\nbf{3.581696e-02}&\num{2.568997e-02}&\num{1.774261e-02}&\num{1.405741e-02}&\num{1.366993e-02}&\num{1.467462e-02}&\num{1.585093e-02}&\num{1.675772e-02}&        ---       \\
   1&\num{3.871888e-03}&\nbf{6.171417e-03}&\num{5.251590e-03}&\num{3.778913e-03}&\num{2.384315e-03}&\num{1.287903e-03}&\num{1.832939e-03}&\num{2.175918e-03}&\numQ{      2.945}\\
   2&\num{2.196668e-03}&\num{2.435972e-03}&\nbf{1.764942e-03}&\num{1.253959e-03}&\num{8.414086e-04}&\num{5.130043e-04}&\num{2.824537e-04}&\num{2.555932e-04}&\numQ{      3.090}\\
   3&\num{5.856062e-04}&\num{6.830772e-04}&\num{4.052278e-04}&\nbf{2.872539e-04}&\num{1.958217e-04}&\num{1.306721e-04}&\num{8.195939e-05}&\num{9.629641e-05}&\numQ{      1.408}\\
   4&\num{8.179372e-04}&\num{2.272683e-04}&\num{4.926680e-05}&\num{5.862617e-05}&\nbf{3.680309e-05}&\num{2.539949e-05}&\num{1.732297e-05}&\num{2.346089e-05}&\numQ{      2.037}\\
   5&\num{9.293895e-04}&\num{3.336667e-04}&\num{8.172195e-05}&\num{1.310765e-05}&\num{7.611021e-06}&\nbf{4.697839e-06}&\num{3.180337e-06}&\num{4.740087e-06}&\numQ{      2.307}\\
   6&\num{9.465101e-04}&\num{3.535768e-04}&\num{9.389850e-05}&\num{1.388631e-05}&\num{3.129111e-06}&\num{1.013360e-06}&\nbf{5.640137e-07}&\num{8.505017e-07}&\numQ{      2.479}\\
   7&\num{9.495255e-04}&\num{3.597228e-04}&\num{9.623942e-05}&\num{1.530132e-05}&\num{3.487484e-06}&\num{6.658895e-07}&\num{2.166283e-07}&\nbf{1.502422e-07}&\numQ{      2.501}\\
\midrule
eoc:&                  &\numQ{      1.400}&\numQ{      1.902}&\numQ{      2.653}&\numQ{      2.133}&\numQ{      2.389}&\numQ{      1.620}&\numQ{      0.528}&\\
\midrule
diag&                  &\numQ{      2.537}&\numQ{      1.806}&\numQ{      2.619}&\numQ{      2.964}&\numQ{      2.970}&\numQ{      3.058}&\numQ{      1.908}&\\
\bottomrule
  \end{tabular}
\end{center}
\caption{Max. deviation from conservation, $E_h^{\text{mass}}$, for BDF2 method in the example of Section \ref{s:mass}.} \vspace*{-0.5cm}
\label{tab:ex3:mass:bdf2}
\end{table}
We observe that this deviation is not zero, but converges with at least the same rate as the $L^2(L^2)$ norm.

To enforce a global constraint on the solution we use a Lagrange multiplier formulation as it has been done in \cite{kuhl1996constraint}. In the context of unfitted FEM the enforcement of exact global conservation has also been considered in \cite{hansbo2016cut}. This changes our formulation from \eqref{e:unfFEM1} to the following formulation: For a given $u_h^0 \in V_h^0$ find  $(u_h^n,\lambda) \in V_h^n\times \rr$, $n=1,\dots,N$, satisfying
\begin{equation}\label{e:unfFEM1:lag}
\int_{\Om{n}{h}} \frac{u_h^n-u_h^{n-1}}{\Delta t} v_h\, dx + \ahn(u^n_h, v_h)+ \gamma_s s_h^n(u_h^n,v_h) + \lambda \int_{\Om{n}{h}} v_h\, dx + \mu \int_{\Om{n}{h}} u_h^n\, dx  = \mu \int_{\Om{n-1}{h}} u_h^{n-1}\, dx \end{equation}
for all $v_h\in V_h^n,~\mu \in \rr$. Here $\lambda$ is the Lagrangian multiplier to the scalar constraint $\int_{\Om{n}{h}} u_h^n\, dx  = \int_{\Om{n-1}{h}} u_h^{n-1}\, dx$. The adaptation to the BDF2 scheme is straight-forward. We denote this solution as $u_h^{n,\ast}$. By construction this approach conserves mass exact. To demonstrate that it does not destroy the accuracy of the original method, we display the difference of the methods in the $L^2(\Om{N}{h})$ norm, $\Vert u_h^{N,\ast} - u_h^{N} \Vert_{\Om{N}{h}}$, in Table \ref{tab:ex3:diff:bdf2}.
We observe that the difference converges with the same order of convergence as the error of the original method as the norm of the difference tends to zero with the same order in space and time, although the \eoc{} is a bit less regular as in the experiments before. We conclude that the convergence properties of the original method is preserved, i.e. we can combine the method with a formation that ensures exact global conservation.

\begin{table}
  \begin{center}
    \footnotesize
    \begin{tabular}{lrrrrrrrrc}
\toprule
    &                 0&                 1&                 2&                 3&                 4&                 5&                 6&                 7&        eoc.      \\
\midrule
   0&\nbf{2.286573e-02}&\num{1.463576e-02}&\num{9.297817e-03}&\num{7.352705e-03}&\num{7.899919e-03}&\num{9.425932e-03}&\num{1.091032e-02}&\num{1.199930e-02}&        ---       \\
   1&\num{4.412119e-03}&\nbf{6.980113e-03}&\num{5.929228e-03}&\num{4.264685e-03}&\num{2.690514e-03}&\num{1.388115e-03}&\num{4.194123e-04}&\num{2.484140e-04}&\numQ{      5.594}\\
   2&\num{1.571063e-03}&\num{2.057683e-03}&\nbf{1.408740e-03}&\num{8.476178e-04}&\num{4.430137e-04}&\num{1.267910e-04}&\num{1.160217e-04}&\num{2.884063e-04}&\numQ{     -0.215}\\
   3&\num{2.385289e-05}&\num{3.758943e-04}&\num{2.449787e-04}&\nbf{1.257189e-04}&\num{4.678815e-05}&\num{1.386648e-05}&\num{6.629885e-05}&\num{1.086589e-04}&\numQ{      1.408}\\
   4&\num{3.567763e-04}&\num{3.223763e-05}&\num{3.355555e-05}&\num{2.124451e-05}&\nbf{5.587318e-06}&\num{5.009894e-06}&\num{1.598373e-05}&\num{2.647280e-05}&\numQ{      2.037}\\
   5&\num{4.556076e-04}&\num{5.688764e-05}&\num{3.462499e-06}&\num{2.727657e-06}&\num{2.611230e-07}&\nbf{9.495594e-07}&\num{3.036523e-06}&\num{5.348619e-06}&\numQ{      2.307}\\
   6&\num{4.687030e-04}&\num{7.276316e-05}&\num{4.769150e-06}&\num{3.800638e-07}&\num{1.165818e-06}&\num{2.257010e-07}&\nbf{5.217779e-07}&\num{9.596889e-07}&\numQ{      2.479}\\
   7&\num{4.707393e-04}&\num{7.786413e-05}&\num{6.395974e-06}&\num{1.301611e-06}&\num{1.298960e-06}&\num{1.947616e-07}&\num{1.071790e-07}&\nbf{1.685559e-07}&\numQ{      2.509}\\
\midrule
eoc&                  &\numQ{      2.596}&\numQ{      3.606}&\numQ{      2.297}& \numQ{0.00}  &\numQ{      2.738}&\numQ{      0.862}&\numQ{     -0.6532048}&\\
\midrule
diag&                  &\numQ{1.711864681}&\numQ{2.30884}&\numQ{3.4861319101}&\numQ{4.49190181}&\numQ{2.5568257}&\numQ{0.86382241}&\numQ{      1.630}&\\
      \bottomrule
  \end{tabular}
\end{center}
\caption{$L^2(\Om{N}{h})$ norm difference at $t=T$ between the two methods discussed  in Section \ref{s:mass}.} \vspace*{-0.5cm}
\label{tab:ex3:diff:bdf2}
\end{table}

\subsection{Example 4: Example with topology change}\label{s:numex:extopo}
In this example we consider a geometrically singular configuration.
The level set function to two colliding and afterwards separating circles is
\begin{equation*}
  \phi(\bx,t) = \min( \Vert \bx - s_1(t) \Vert_2, \Vert \bx - s_2(t) \Vert_2) - R, \quad s_1(t) = (0, t - 3/4), \quad s_2(t) = (0, 3/4 - t),
\end{equation*}
where $s_1(t)$ and $s_2(t)$ is the center of the two circles. As the time interval we choose $T=3/2$, so that $\phi(\bx,0)=\phi(\bx,T)$. The corresponding velocity field is discontinuous at $y=0$ and $t=T/2$:
\begin{equation*}
  \bw(x,y,t) = \left\{
    \begin{array}{rl}
      (0,\hphantom{-}1)^T & \text{ if } y > 0 \text { and } t \leq T/2 \text{ or } y < 0 \text { and } t > T/2, \\
      (0,-1)^T & \text{ if } y \leq 0 \text { and } t \leq T/2 \text{ or } y > 0 \text { and } t > T/2.
    \end{array}\right.
\end{equation*}%
\begin{figure}
  \begin{center}
    -1 \includegraphics[width=0.9\textwidth]{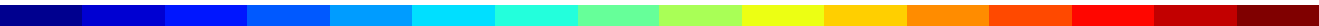} 1 \\[-4.5ex]
    \hspace*{-0.15cm}
  \begin{tabular}{c@{}c@{}c@{}c@{}c@{}c@{}c@{}c@{}c@{}c@{}c}
    \hspace*{-0.41cm}\includegraphics[width=0.26\textwidth,angle=-90]{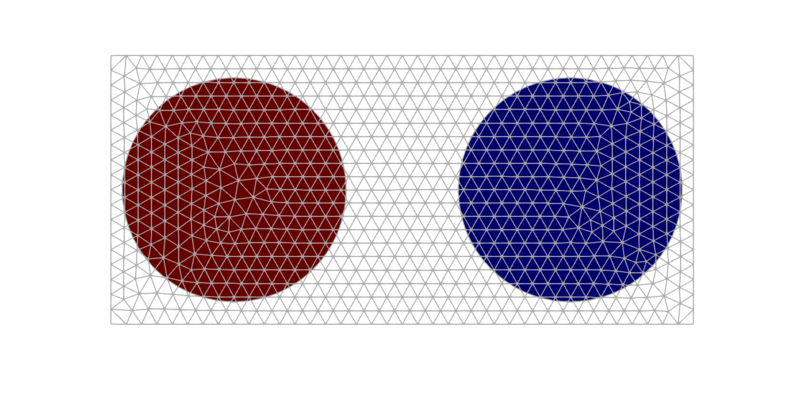} \hspace*{-0.41cm}&
    \hspace*{-0.41cm}\includegraphics[width=0.26\textwidth,angle=-90]{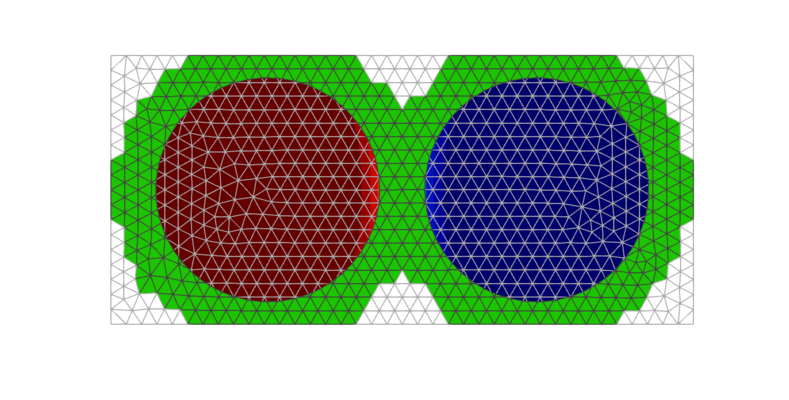} \hspace*{-0.41cm}&
    \hspace*{-0.41cm}\includegraphics[width=0.26\textwidth,angle=-90]{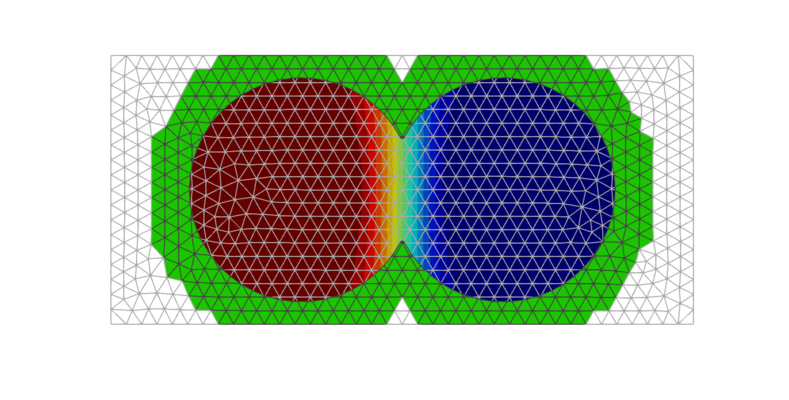} \hspace*{-0.41cm}&
    \hspace*{-0.41cm}\includegraphics[width=0.26\textwidth,angle=-90]{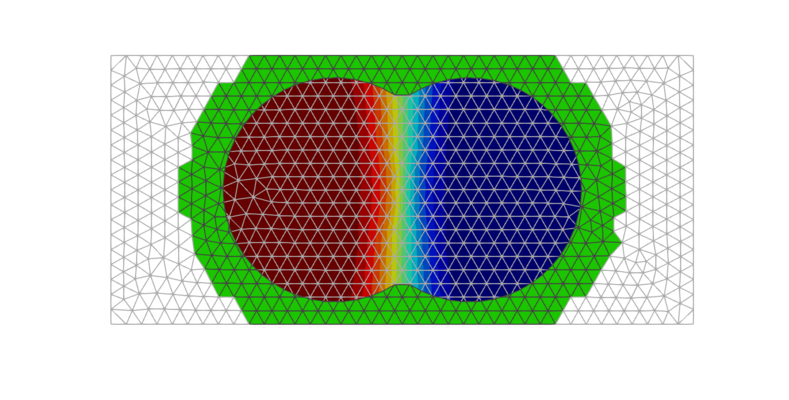} \hspace*{-0.41cm}&
    \hspace*{-0.41cm}\includegraphics[width=0.26\textwidth,angle=-90]{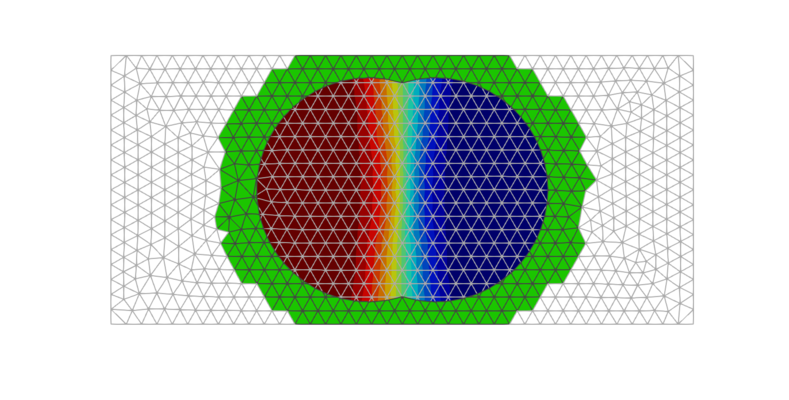} \hspace*{-0.41cm}&
    \hspace*{-0.41cm}\includegraphics[width=0.26\textwidth,angle=-90]{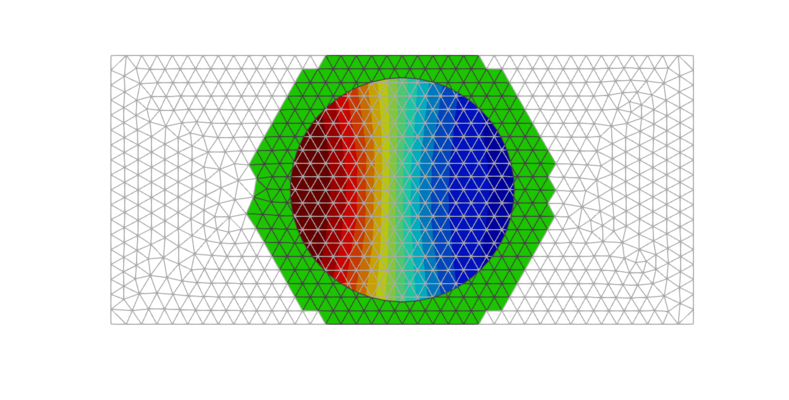} \hspace*{-0.41cm}&
    \hspace*{-0.41cm}\includegraphics[width=0.26\textwidth,angle=-90]{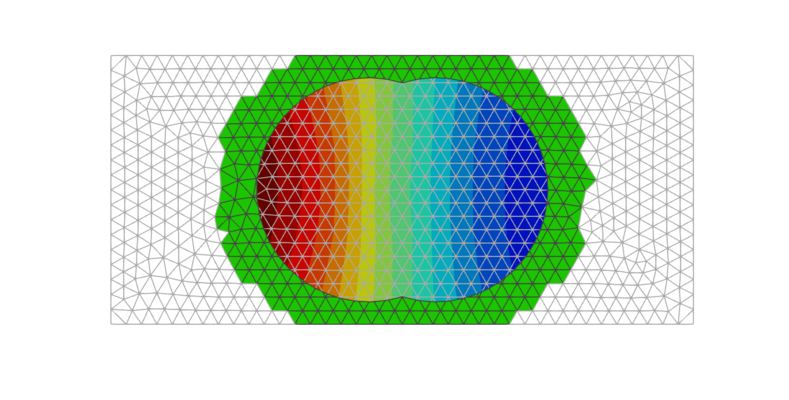} \hspace*{-0.41cm}&
    \hspace*{-0.41cm}\includegraphics[width=0.26\textwidth,angle=-90]{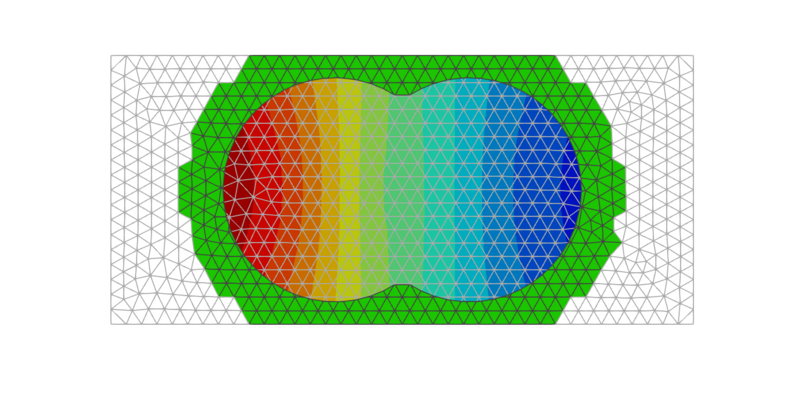} \hspace*{-0.41cm}&
    \hspace*{-0.41cm}\includegraphics[width=0.26\textwidth,angle=-90]{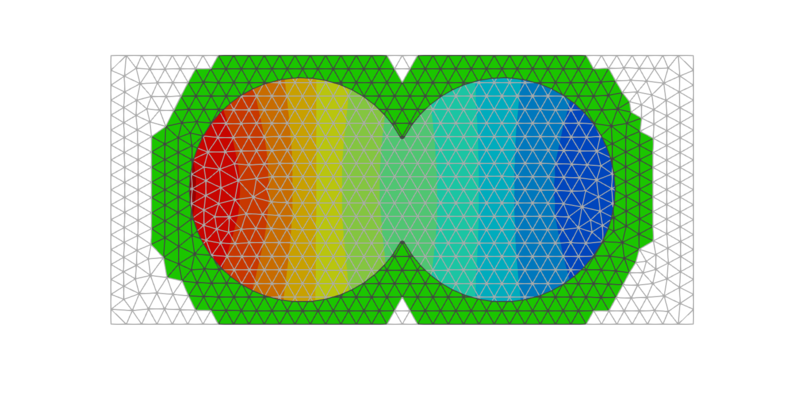} \hspace*{-0.41cm}&
    \hspace*{-0.41cm}\includegraphics[width=0.26\textwidth,angle=-90]{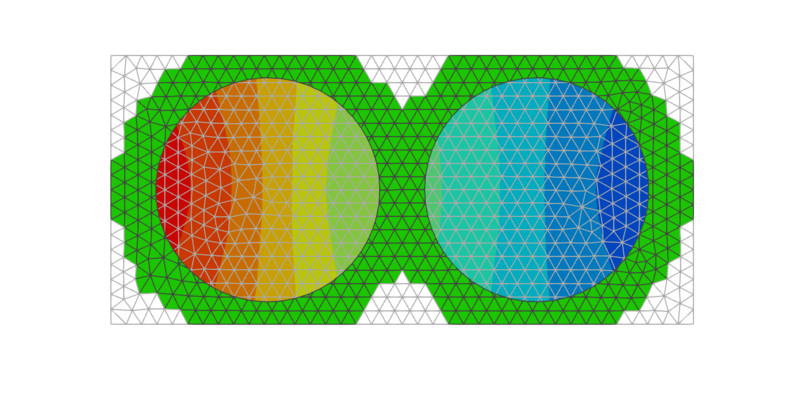} \hspace*{-0.41cm}&
    \hspace*{-0.41cm}\includegraphics[width=0.26\textwidth,angle=-90]{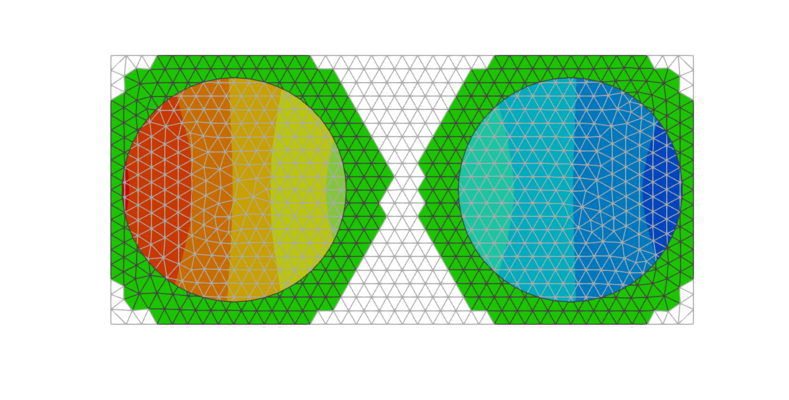} \hspace*{-0.41cm}
    \\[-9ex]
    \hspace*{-0.41cm}\includegraphics[width=0.26\textwidth,angle=-90]{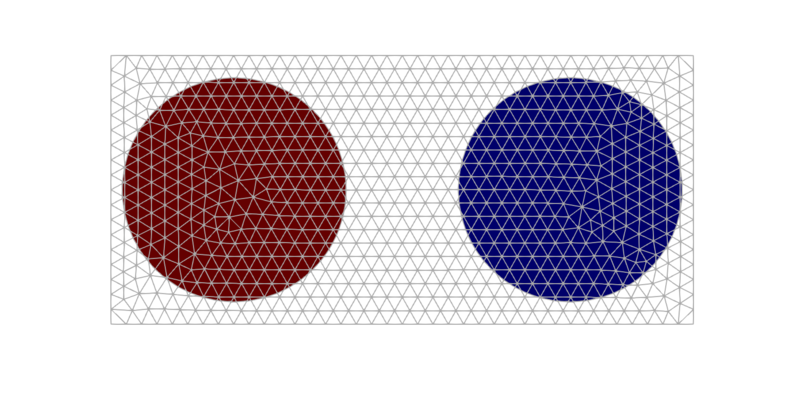} \hspace*{-0.41cm}&
    \hspace*{-0.41cm}\includegraphics[width=0.26\textwidth,angle=-90]{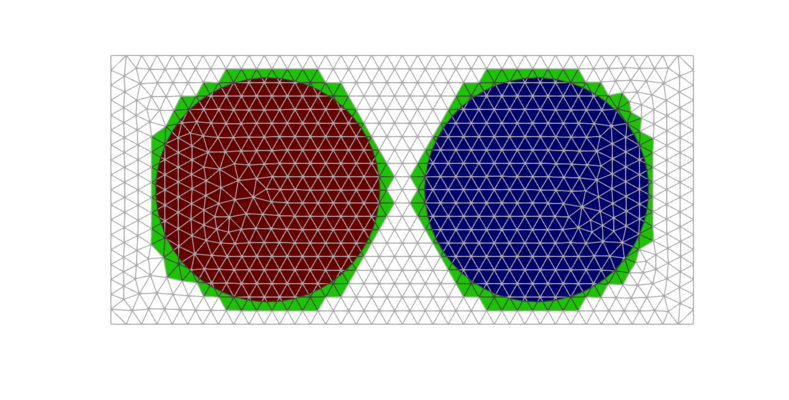} \hspace*{-0.41cm}&
    \hspace*{-0.41cm}\includegraphics[width=0.26\textwidth,angle=-90]{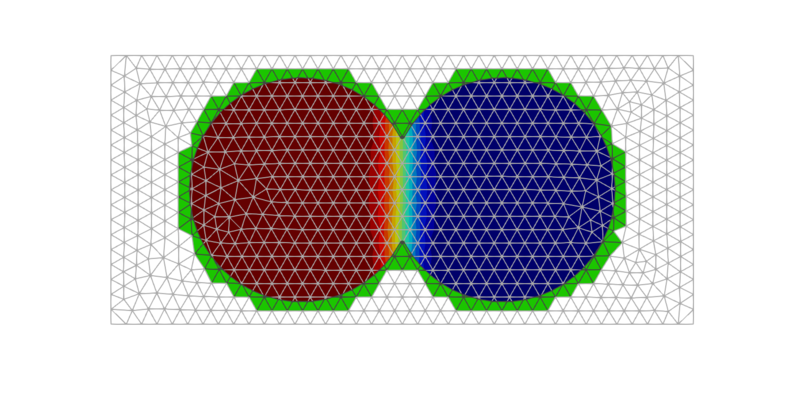} \hspace*{-0.41cm}&
    \hspace*{-0.41cm}\includegraphics[width=0.26\textwidth,angle=-90]{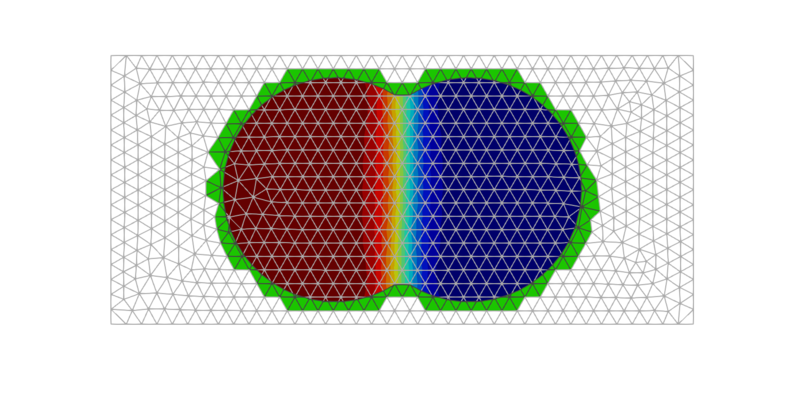} \hspace*{-0.41cm}&
    \hspace*{-0.41cm}\includegraphics[width=0.26\textwidth,angle=-90]{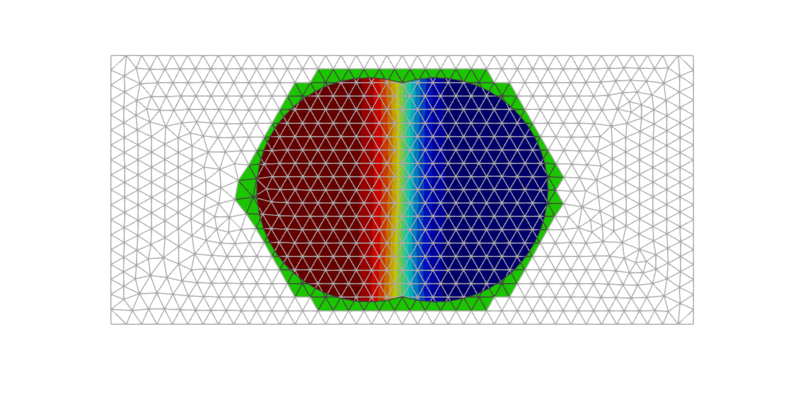} \hspace*{-0.41cm}&
    \hspace*{-0.41cm}\includegraphics[width=0.26\textwidth,angle=-90]{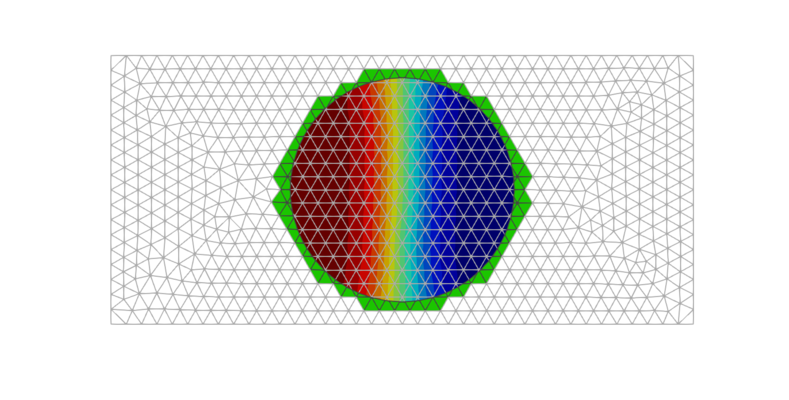} \hspace*{-0.41cm}&
    \hspace*{-0.41cm}\includegraphics[width=0.26\textwidth,angle=-90]{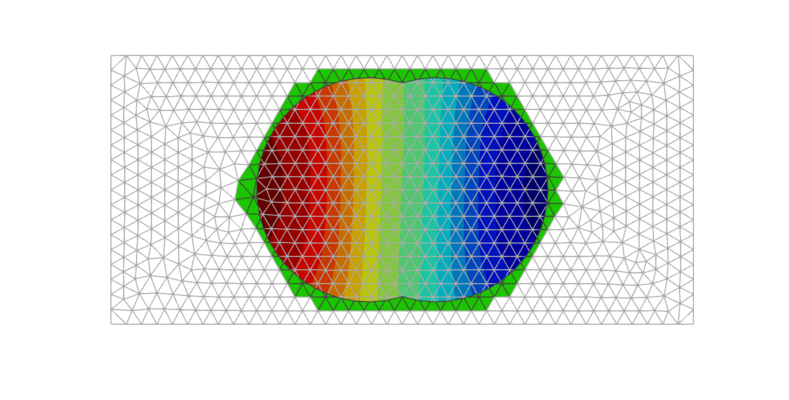} \hspace*{-0.41cm}&
    \hspace*{-0.41cm}\includegraphics[width=0.26\textwidth,angle=-90]{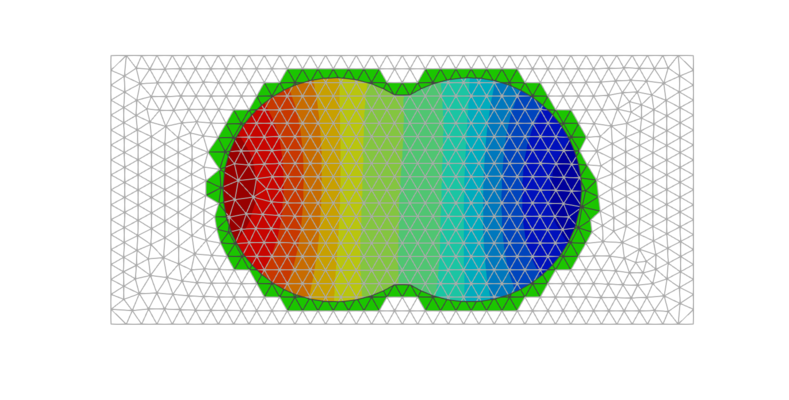} \hspace*{-0.41cm}&
    \hspace*{-0.41cm}\includegraphics[width=0.26\textwidth,angle=-90]{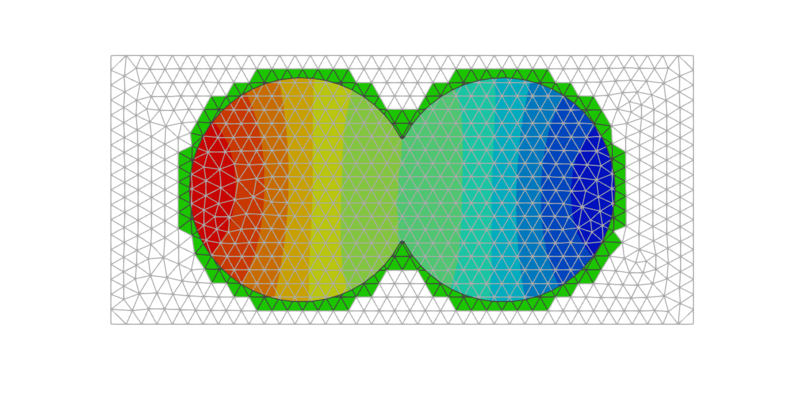} \hspace*{-0.41cm}&
    \hspace*{-0.41cm}\includegraphics[width=0.26\textwidth,angle=-90]{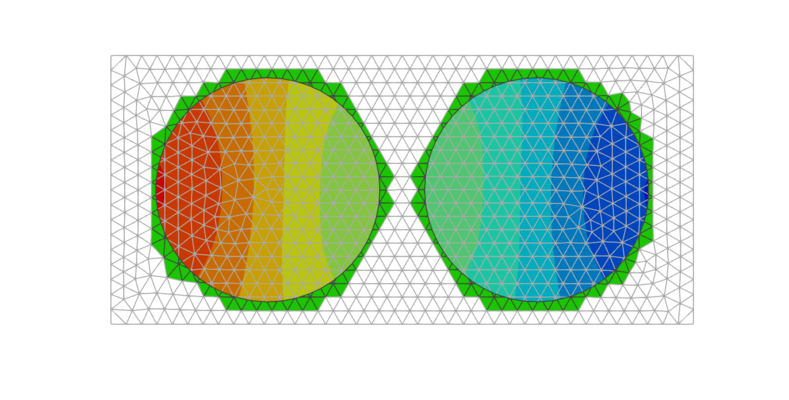} \hspace*{-0.41cm}&
    \hspace*{-0.41cm}\includegraphics[width=0.26\textwidth,angle=-90]{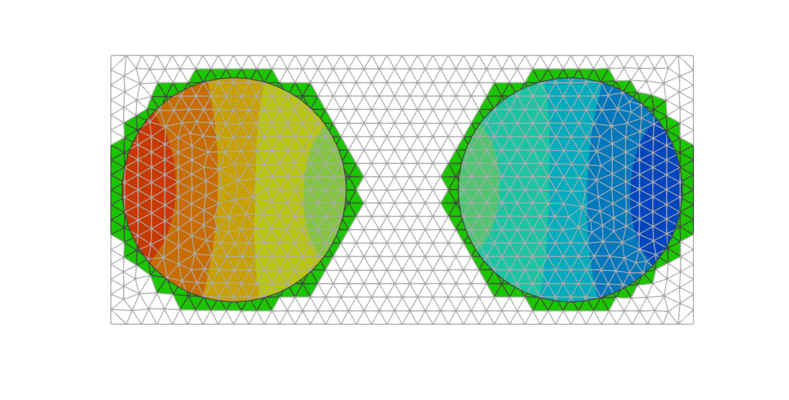} \hspace*{-0.41cm}
    \\[-2ex]
    \small $\!t\!=\!0.0T$ &
    \small $\!t\!=\!0.1T$ &
    \small $\!t\!=\!0.2T$ &
    \small $\!t\!=\!0.3T$ &
    \small $\!t\!=\!0.4T$ &
    \small $\!t\!=\!0.5T$ &
    \small $\!t\!=\!0.6T$ &
    \small $\!t\!=\!0.7T$ &
    \small $\!t\!=\!0.8T$ &
    \small $\!t\!=\!0.9T$ &
    \small $\!t\!=\!1.0T$
    \\[-2ex]
  \end{tabular}
\end{center}
\caption{Simulation results of the example in Section \ref{s:numex:extopo}. The first row displays the results for $\Delta t = T / 10$, the second row the results for $\Delta t = T / 80$. The elements marked in green are the ones in $\Td{n}$.
}
\vspace*{-0.4cm}
\label{fig:topo}
\end{figure}%
As initial concentration we choose $u_0 = -1$ for $y < 0$, i.e. in the lower circle and $u_0 = 1$ for $y > 0$, i.e. in the upper circle. We choose the globally (not component-wise) conservative method \eqref{e:unfFEM1:lag}, $\alpha=0.1$, $h=0.07$ and the time step sizes $\Delta t = T/10$ and $\Delta t = T/80$.
In Figure \ref{fig:topo} the results of the simulations are displayed. Note that this setup is not covered by the analysis as the domain and its evolution is not smooth. Obviously, the method is stable even for this example with non smooth domains. Note that there can be an artificial mass exchange between the two phases if the extension layers intersect before the physical domains intersect, cf. for instance the picture to $\Delta t = T / 10$ and $t=0.1T$. However, for finer time steps these artificial intersections are reduced, cf. the result for $\Delta t = T / 80$ and $t=0.1T$.

\section{Conclusions and open problem} \label{s:Conclusions}
In this paper we introduced a numerical method for solving PDEs on evolving domains. The method is easy to implement as it bases on standard stationary unfitted finite element discretizations and standard finite difference approximations in time. We were able to derive optimal order error bounds in the energy norm for this method. Unlike space--time Galerkin methods, the present approach does not require a physical domain reconstruction on each time slab. In fact,
one only needs approximations of physical domain at time instances $t_n$. No reconstruction of a Lagrangian or arbitrary mapping $\Psi$ from a reference domain is needed either,   which makes the method particularly attractive for application, where the domain deformation is given by a series of snapshots without further information about the underlying motion: One example is the blood flow simulation in a human heart when the patient-specific motion of the heart walls is recovered from a sequence of medical images; see, e.g., \cite{mittal2016computational,su2016cardiac,LOV2017} and references therein.

At the end of this study, let us discuss a few points where extensions and possible modifications of the presented method or its analysis are worth pursuing.

So far, in the method we used an extension to the domain $\Om{n}{h}$ based on \textit{a priori} estimated strip size. One could improve this by only involving elements that are relevant on the next time step based on $\Om{n+1}{h}$ if this information is available. Furthermore, one could separate the stabilized solution step and the extension into two steps as has been done in the semi-discrete method.

In the analysis we only derived error estimates for the $H^1$ norm in space, but observed a higher order convergence (in space) in the $L^2$ norm. Using duality techniques we expect that improved rates can also be obtained for $L^2$ norm estimates.

In this paper we treated only implicit Euler discretizations in the analysis and commented on extensions to BDF2 discretization which we also used in the numerical experiments. An extension to more general time stepping scheme has not been used so far, but is an interesting topic.

We only consider a comparably simple model problem.
Many applications will involve more complex problems, e.g. two-phase Navier-Stokes equations. An extension of the method to these problems should be investigated and analyzed in the future.

For the numerical examples we used a geometry approximation and finite element order $q=m=1$ although the analysis allows also for higher order schemes in space. Due to practical reasons such as accurate and robust numerical integration, the development and implementation of higher order methods can be difficult, but should not -- based on recently developed techniques \cite{muller2013highly,lehrenfeld2015cmame,saye2015hoquad,fries2015} -- pose a major obstacle.

\appendix
\section{Proof of Lemma \ref{L_exch}}
\begin{proof}
The result follows in three steps.

\emph{Step 1.} Define a sequence of $v^m\in C^\infty(Q_0)$ such that $v^m\to v$ in $L^2(Q_0)$ and   $v^m_t\to v_t$ in $L^2(Q_0)$. Functions $v_m$ can be constructed by the following standard argument. First note that from inequality $v^2(0)=
v^2(t)-\int_{0}^{t}(v^2)_t\,ds\le v^2(t)+2(\int_{0}^{T}v^2\,ds)^{\frac12}(\int_{0}^{T}v^2_t\,ds)^{\frac12}$ it follows that  $v(x,0)$ is well defined as an element of $L^2(\Omega_0)$. Similar $v(T)\in L^2(\Omega_0)$. Thus we consider
\[
\tilde v(x,t)=\left\{
\begin{split}
   v(x,t) &~~\text{for}~ x\in\Om{}{0},\,t\in(0,T) \\
   v(x,0) &~~\text{for}~ x\in\Om{}{0},\,t\le0 \\
   v(x,T) &~~\text{for}~ x\in\Om{}{0},\,t\ge T \\
    0 &~~ \text{otherwise}
\end{split}
\right.,
\]
and define $v^m= \omega_{\epsilon_m}*\tilde{v}$ with a smooth mollifier $\omega_{\epsilon}$ and $\epsilon_m\to0$ with $m\to\infty$. By the basic properties of mollifiers $v^m_t= \omega_{\epsilon_m}*\tilde{v}_t$, and  $v^m\in C^\infty(Q_0)$ is the desired sequence. Due to the continuity of $\E{0}\,:\,L^2(\Omega_0)\to L^2(\mathcal{O}(\Omega_0))$, it holds
\[
\|\E{0}w\|^2_{L^2(\mathcal{O}(Q_0))}=\int_{0}^{T}\int_{\mathcal{O}(\Omega_0)}|\E{0}w|^2\,dx\,dt\le c \int_{0}^{T}\int_{\Omega_0}w^2\,dx\,dt=
c\|w\|^2_{L^2(Q_0)}\quad\text{for any}~w\in L^2(Q_0).
\]
Since $\E{0}$ is linear, we infer that $\E{0}$ is continuous from $L^2(Q_0)$ to $L^2(\mathcal{O}(Q_0))$ and hence the convergence
 $v^m\to v$ in $L^2(Q_0)$ and   $v^m_t\to v_t$ in $L^2(Q_0)$ imply $\E{0}v^m\to \E{0}v$ in $L^2(\mathcal{O}(Q_0))$ and   $\E{0}v^m_t\to \E{0}v_t$ in $L^2(\mathcal{O}(Q_0))$.

\emph{Step 2.} We now show that for a smooth function $w=v^m\in C^\infty(Q_0)$ ($m$ is fixed) the extension and time derivative commute, i.e., $\left(\E{0} w\right)_t=\E{0} w_t\quad\text{in}~~\mathcal{O}(Q_0)$. 
We have for fixed $t\in(0,T)$ and $|\delta|$ sufficiently small:
\[
\begin{split}
\E{0}w(t)-\E{0}w(t+\delta)&=\E{0}(w(t)-w(t+\delta))\\
&=\E{0}(\delta w_t(t)-\xi_{\delta})\quad\text{with}~\xi_{\delta}(\cdot)=\int_{t}^{t+\delta}w_{tt}(s,\cdot)s\,ds\\
&=\delta\E{0}w_t(t)-\E{0}\xi_{\delta}\quad \text{in}~~\mathcal{O}(\Omega_0),
\end{split}
\]
where we used the linearity of $\E{0}$. 
By the continuity of $\E{0}$ in $L^\infty(\Omega_0)$, it holds
\[
\|\E{0}\xi_{\delta}\|_{L^\infty(\mathcal{O}(\Omega_0))}\le c\|\xi_{\delta}\|_{L^\infty(\Omega_0)}\le c|\delta|^2\|w_{tt}\|_{L^\infty(Q_0)}\le C|\delta|^2,
\]
where $C$ is independent of $\delta$.
Since $t$ was taken arbitrary from $(0,T)$,  this proves $\left(\E{0} w\right)_t=\E{0} w_t$ in $\mathcal{O}(Q_0)$.

\emph{Step 3.} Finally, we show $\left(\E{0} v\right)_t=\E{0} v_t$ by a density argument. Using $L^2(\mathcal{O}(Q_0))$-convergence from step 1 and the commutation property from step 2, we get for any finite function $\eta\in \dot{C}(\mathcal{O}(Q_0))$:
\[
\begin{split}
\int_{\mathcal{O}(Q_0)} (\E{0} v)\eta_t \, d(x,t) &= \lim_{m\to\infty}\int_{\mathcal{O}(Q_0)} (\E{0} v^m)\eta_t \, d(x,t)
= -\lim_{m\to\infty}\int_{\mathcal{O}(Q_0)} (\E{0} v^m)_t\eta \, d(x,t)\\
&= -\lim_{m\to\infty}\int_{\mathcal{O}(Q_0)} (\E{0} v^m_t)\eta \, d(x,t)= -\int_{\mathcal{O}(Q_0)} (\E{0} v_t)\eta \, d(x,t).
\end{split}
\]
Thus $\left(\E{0} v\right)_t=\E{0} v_t$ holds by the definition of the weak partial derivative.
\end{proof}

\bibliography{literatur}{}
\bibliographystyle{siam}
\end{document}